\documentclass[english,10pt]{amsart}
\usepackage[T1]{fontenc}
\usepackage[utf8]{inputenc}
\usepackage{babel}
\usepackage{graphicx,xcolor}

\usepackage{amsmath,amsfonts,enumerate,amssymb}
\usepackage{amsthm}
\usepackage{float}

\theoremstyle{plain}
\newtheorem{theorem}{Theorem}[section]

\theoremstyle{plain}
\newtheorem{proposition}[theorem]{Proposition}
\newtheorem{corollary}[theorem]{Corollary}
\newtheorem{lemma}[theorem]{Lemma}

\theoremstyle{definition}
\newtheorem{remark}[theorem]{Remark}
\newtheorem{definition}[theorem]{Definition}
\newtheorem{example}[theorem]{Example}

\raggedbottom

\newenvironment{iiv}{\begin{enumerate}[{\rm (i)}]}{\end{enumerate}}
\newenvironment{abc}{\begin{enumerate}[{\rm (a)}]}{\end{enumerate}}
\newenvironment{num}{\begin{enumerate}[{\rm (1)}]}{\end{enumerate}}

\newcommand*{\ZZ}{\mathbb{Z}}

\DeclareMathOperator{\inter}{int}
\DeclareMathOperator{\intt}{int}

\def\diff{\Phi}

\newcommand*{\Tn}{\mathcal{T}_n}

\def\DD{\mathrm{DD}}
\def\GL{\mathrm{GL}}

\newcommand*{\uR}{\underline{\mathbb{R}}}

\newcommand*{\oS}{\overline{S}}
\newcommand*{\ve}{\varepsilon}
\newcommand*{\de}{\delta}
\newcommand*{\ff}{\varphi}

\newcommand*{\RR}{{\mathbb{R}}}

\newcommand*{\Ce}{\mathrm{C}}

\newcommand*{\bR}{\mathbb{R}}

\newcommand*{\zz}{\mathbf{z}}
\newcommand*{\vv}{\mathbf{v}}

\newcommand*{\mv}{\mathbf{m}}

\newcommand*{\ww}{\mathbf{w}}
\newcommand*{\mol}{\overline{m}}

\newcommand*{\ee}{\mathbf{e}}

\newcommand*{\yy}{\mathbf{y}}

\newcommand*{\xx}{\mathbf{x}}
\newcommand*{\TT}{\mathbb{T}}
\newcommand*{\NN}{\mathbb{N}}
\def\downto{\downarrow}
\def\upto{\uparrow}

\def\zl{z_{*j}}
\def\zu{z_{j}^*}
\def\xeta{x}

\newcommand*{\Jf}{D_{\text{\rm Clarke}} f}
\newcommand*{\Jff}{D_{\text{\rm Clarke}} \diff}

\DeclareMathOperator{\rint}{rint}

\begin{document}

\title{A homeomorphism theorem for sums of translates}

\author{Bálint Farkas, Béla Nagy and Szilárd Révész}

\date{}

\keywords{Sums of translates function, locally bi-Lipschitz homeomorphism, interpolation by generalized algebraic and trigonometric polynomials, abstract interpolation, moving node Hermite--Fej\'er interpolation, weighted Bojanov problem.}

\thanks{This research was partially supported by the DAAD-TKA Research Project ``Harmonic Analysis and Extremal Problems'' \# 308015.  Szilárd Gy.~Révész was supported in part by Hungarian National Research, Development and Innovation Fund projects \# K-119528 and \# K-132097.}
\subjclass[2010]{41A50, 41A52, 42A15, 26A51}

\begin{abstract}
For a fixed positive integer $n$ consider continuous functions $ K_1,\dots$, $ K_n:[-1,1]\to \mathbb{R}\cup\{-\infty\}$
 that are concave and real valued on $[-1,0)$ and on $(0,1]$, and satisfy $K_j(0)=-\infty$.
Moreover, let $J:[0,1]\to \RR\cup\{-\infty\}$ be upper bounded and such that $[0,1]\setminus J^{-1}(\{-\infty\})$ has
at least $n+1$ elements, but it is arbitrary otherwise.
For $x_0:=0<x_1<\dots< x_n \le x_{n+1}:=1$, so called nodes, and for $t\in [0,1]$ consider
the sum of translates function $F(x_1,\ldots,x_n,t):=J(t)+\sum_{j=1}^n K_j(t-x_j)$,
and the vector of interval maximum values $m_j:=m_j(x_1,\ldots,x_n):=\max_{t\in [x_j,x_{j+1}]}F(x_1,\ldots,x_n,t)$  ($j=0,1,\ldots,n$).
We describe the structure of the arising interval maxima as the nodes run over the $n$-dimensional simplex.
Applications presented here range from abstract moving node Hermite--Fej\'er interpolation for
generalized algebraic and trigonometric polynomials via Bojanov's problem to
more abstract results of interpolation theoretic flavour.
 \end{abstract}
\maketitle
\section{Introduction}\label{sec:newintro}

This paper proves very general one-to-one correspondences, with a bi-Lipschitz continuous dependence between admissible nodes $\yy=(y_1,\ldots,y_n)$ and tuples of differences $(m_1-m_0,\ldots,m_n-m_{n-1})$ of neighboring \emph{local maxima} $m_j$, for \emph{sums of translates expressions}, meaning functions of the form $F(\yy,t):=J(t)+\sum_{j=1}^n K_j(t-y_j)$. Here the \emph{field} $J$ can be almost arbitrary, defined on the interval $[0,1]$, and the essential requirement on the \emph{kernels} $K_j$ is their concavity, both on $(-1,0)$ and on $(0,1)$, while at $0$ they are to have a singularity. Here the nodes $y_j$ are taken from $[0,1]$ in the order of their indices, and the interval maxima $m_j$ are taken on $[y_j,y_{j+1}]$ ($j=0,\dots,n$, where $y_0:=0$, 
$y_{n+1}:=1$). 
A rather particular case of our main result can be stated as follows.
\begin{theorem}\label{thm:null}
	For $n\in \NN$ let $K_1,\dots, K_n:[-1,0)\cup (0,1]\to \RR$ be functions such that each of them is concave and continuous on $[-1,0)$ and $(0,1]$, each of them is strictly monotonically decreasing on $[-1,0)$ and strictly monotonically increasing on $(0,1]$ and for each $j\in \{1,\dots,n\}$
	\begin{equation*}
	\lim_{t\downto 0}K_j(t)=-\infty=\lim_{t\upto 0}K_j(t)=:K_j(0).
	\end{equation*}
For $t\in [0,1]$ and $0<y_1<\cdots <y_{n}<1$ consider the sum of translates function $F(\yy,t):=\sum_{j=1}^n K_j(t-y_j)$, and its maxima $m_j(\yy)$ on $[y_j,y_{j+1}]$ where $\yy=(y_1,\dots, y_n)$. Then the mapping
\begin{equation*}
S\to \RR^n,\quad \ww\mapsto (m_1(\ww)-m_0(\ww), m_2(\ww)-m_1(\ww),\ldots, m_n(\ww)-m_{n-1}(\ww) )
\end{equation*}
is a homeomorphism (and locally bi-Lipschitz), where 
$S=\{\yy\in\RR^n:0<y_1<\cdots<y_{n}<1\}$.
\end{theorem}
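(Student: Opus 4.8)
The plan is to show that the displayed map, which I call $\Phi\colon S\to\RR^n$, is a proper local homeomorphism which is moreover bi-Lipschitz near every point, and then to upgrade this to a global homeomorphism using that $\RR^n$ is simply connected; throughout, write $\Phi=A\circ(m_0,\dots,m_n)$, where $A$ is the difference operator $(Ax)_j=x_j-x_{j-1}$ ($j=1,\dots,n$), so that it suffices to understand the maps $\yy\mapsto m_j(\yy)$. First, for $1\le j\le n-1$ the function $t\mapsto F(\yy,t)$ is, on $(y_j,y_{j+1})$, a sum of functions each concave on $(-1,0)$ or on $(0,1)$ according to the sign of $t-y_i$, hence concave, and it tends to $-\infty$ at both endpoints because of the singularities of $K_j$ and $K_{j+1}$; for $j=0$ (resp.\ $j=n$) it tends to $-\infty$ only at $y_1$ (resp.\ only at $y_n$). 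Hence each $m_j(\yy)$ is attained on a compact subinterval of $(y_j,y_{j+1})$, resp.\ of $[0,y_1)$ and $(y_n,1]$. On a compact $Q\subset S$ the nodes stay uniformly separated and bounded away from $\{0,1\}$, and within distance $\eta$ of an endpoint of its interval $F(\yy,\cdot)$ is at most $K_j(\eta)+\mathrm{const}_Q\to-\infty$ while the value of $F$ at the interval's midpoint is bounded below on $Q$; so all maximizers lie in a fixed compact subset of $(0,1)$ disjoint from the nodes, on which each translate $K_i(\,\cdot-y_i)$ is Lipschitz. Consequently $\yy\mapsto m_j(\yy)$, and hence $\Phi$, is locally Lipschitz on $S$.

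Next I analyse the Clarke generalized Jacobian of $\Phi$. Fix $\yy\in S$. By the Clarke subdifferential calculus for max-type functions, together with the fact that only the $k$-th summand of $F$ depends on $y_k$ (so $\partial_{y_k}F(\yy,t)\in-\partial K_k(t-y_k)$), every matrix in that generalized Jacobian has the form $V=AW$, where the $(j,k)$ entry of $W$ ($j=0,\dots,n$, $k=1,\dots,n$) lies in the convex hull of the sets $-\partial K_k(t^*-y_k)$, with $t^*$ running over the maximizers of $m_j$. Since those maximizers lie strictly between $y_j$ and $y_{j+1}$ (in $[0,y_1)$ for $j=0$, in $(y_n,1]$ for $j=n$), strict monotonicity of $K_k$ forces $W_{jk}<0$ for $k\le j$ and $W_{jk}>0$ for $k\ge j+1$; concavity of $K_k$ (so its subdifferential is order-reversing) then gives, for $j=1,\dots,n$, that $V_{jk}=W_{jk}-W_{j-1,k}\ge 0$ when $k\ne j$ while $V_{jj}<0$. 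Telescoping each column, $\sum_{j=1}^{n}V_{jk}=W_{nk}-W_{0k}<0$, the minuend being negative (a maximizer of $m_n$ exceeds $y_n\ge y_k$) and the subtrahend positive (a maximizer of $m_0$ lies below $y_1\le y_k$).

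Hence $B:=-V$ has positive diagonal, nonpositive off-diagonal entries, and strictly positive column sums, so $B^{\top}$ is a $Z$-matrix with strictly positive row sums and is therefore nonsingular: if $B^{\top}x=0$ with $x\ne 0$, pick $i$ with $|x_i|=\max_k|x_k|$ and (replacing $x$ by $-x$ if needed) $x_i>0$; then $0=(B^{\top}x)_i\ge x_i\sum_k(B^{\top})_{ik}>0$, a contradiction. Thus every element of the generalized Jacobian of $\Phi$ at every point of $S$ is invertible, and Clarke's inverse function theorem implies that $\Phi$ maps some neighbourhood of each point of $S$ bi-Lipschitz-homeomorphically onto a neighbourhood of its image. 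In particular $\Phi$ is an open map and a local homeomorphism, which already establishes the ``locally bi-Lipschitz'' assertion.

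It remains to globalize. Among the $n+1$ intervals $[y_i,y_{i+1}]$ ($y_0:=0$, $y_{n+1}:=1$) one has length $\ge 1/(n+1)$; at its midpoint every argument $t-y_i$ is bounded away from $0$ and from $\pm1$, so the corresponding $m_j(\yy)$ is bounded below by a constant independent of $\yy$, whence $\max_j m_j$ is bounded below on $S$. On the other hand, if $\yy\to\partial S$ then $y_{i+1}-y_i\to 0$ for some $i$, and, estimating the remaining summands by $K_l\le\max\{K_l(-1),K_l(1)\}$, one gets $m_i(\yy)\to-\infty$ (from the two colliding singularities when $1\le i\le n-1$, from $K_1(-y_1)\to-\infty$ when $i=0$, from $K_n(1-y_n)\to-\infty$ when $i=n$). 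Therefore $\max_j m_j-\min_j m_j\to\infty$, so $\|\Phi(\yy)\|\to\infty$ and $\Phi$ is proper. A proper continuous map is closed and an open map has open image, so by connectedness of $\RR^n$ the map $\Phi$ is onto; a proper local homeomorphism onto $\RR^n$ is a covering map; and since $\RR^n$ is simply connected and $S$ is connected, $\Phi$ is a homeomorphism, which together with the previous paragraph completes the argument. The genuine obstacle is the second paragraph: making the max-function subdifferential calculus rigorous and reading off the precise sign structure of $V$ when the kernels are not differentiable, when the interval maximizers need not be unique, and when (for $m_0$ and $m_n$) a maximizer may sit at the boundary of its defining interval. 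The first and last paragraphs are routine once the right uniform estimates are in place, and the linear-algebra lemma of the third is elementary.
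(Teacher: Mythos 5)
Your outline coincides with the paper's own strategy: local Lipschitz continuity of the $m_j$, invertibility of every matrix in the Clarke generalized Jacobian via a diagonal-dominance/$Z$-matrix argument, Clarke's inverse function theorem for the local bi-Lipschitz statement, and properness plus simple connectedness of $\RR^n$ for globalization (the paper invokes Ho's theorem rather than redoing the covering-space argument, and derives Theorem \ref{thm:null} as the case $J\equiv 0$ of Theorem \ref{thm:homeo3} via Corollary \ref{cor:homeo3}, since strict monotonicity plus concavity yields \eqref{cond:PM} with some $c>0$). Your first, third and fourth paragraphs, including the sign pattern of $V$ and the column-sum telescoping, match the paper's Lemmas \ref{lem:cdiagdom}--\ref{lem:cdomconv} and Propositions \ref{prop:Lipschitz} and \ref{prop:proper}.

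The step you yourself flag as ``the genuine obstacle'' is, however, a real gap, and it is exactly where the paper spends most of its effort. Two concrete problems with invoking ``Clarke subdifferential calculus for max-type functions'' as stated. First, the generalized Jacobian is the convex hull of \emph{limits} of Jacobians at nearby points $\xx_k\to\yy$, where the maximizers are those of $m_j(\xx_k)$, not of $m_j(\yy)$; strict inequalities such as $V_{jj}<0$ and $\sum_j V_{jk}<0$ are not preserved under limits and convex combinations unless they are \emph{uniform}. The paper secures uniformity by first localizing the near-maximizer sets $Z_j$ away from the nodes (Lemmas \ref{lem:preZ} and \ref{lem:Zq}), then proving two-sided Dini-derivative estimates for $m_j$ in terms of $D_{\pm}K_i$ at $\inf Z_j$ and $\sup Z_j$ (Lemma \ref{lem:Demyanov-formula}), and finally observing that the resulting matrices all lie in the \emph{closed convex} set of diagonally $c$-dominant matrices with one fixed $c>0$ (Lemmas \ref{lem:cdiagdom} and \ref{lem:cdomconv}); in your setting that uniform $c$ comes from strict monotonicity plus concavity and compactness of the maximizer sets, but it must be extracted explicitly. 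Second, writing every element of the generalized Jacobian as $A W$ with a \emph{single} matrix $W$ is not automatic: row-wise one only gets membership in $\partial m_j(\yy)-\partial m_{j-1}(\yy)$ with possibly different representatives of $\partial m_j$ in consecutive rows, which destroys the telescoping $\sum_j V_{jk}=W_{nk}-W_{0k}$. The paper sidesteps this by evaluating the honest Jacobian at points where all the $m_j$ are simultaneously differentiable (a full-measure set suffices for computing the Clarke Jacobian), where the telescoping is exact, and only then passing to limits and convex hulls inside the closed convex class $\DD_c$. With these two repairs your argument closes; without them paragraph two remains an unproved claim rather than a proof.
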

Note the first simplification here as compared to the above: We have set the ``field'' $J=0$ to ease formulation and to avoid technicalities. Later we will see that one can allow here essentially \emph{arbitrary} functions $J$ as fields, but of course the simplex $S$ then needs to be replaced by some connected, open set $Y$, whose definition is technical but natural. Moreover, we will see that the monotonicity assumption can be weakened substantially (even allowing $1$-periodic $K_j$). The main results are then presented in Theorems \ref{thm:homeo3}, \ref{thm:periodic} and \ref{thm:Jinftyprime}. Before describing the precise framework and introducing a number of technicalities, let us explain the origins of the problem and its connections to applications.

\medskip
There are several antecedents of our study, which led us to the general questions and setup given here. One direction of development occurred on the torus $\TT:=\RR/\ZZ$, where minimax type questions of the so-called ``strong polarization problem'' led Ambrus, Ball and Erdélyi in \cite{ABE} to some initial results and to an inspiring conjecture for sums of translates of the form $F(\yy,t)=\sum_{j=1}^n K(t-y_j)$, built from one fixed concave kernel function $K$. This problem was then discussed and solved in increasing generality 
in the paper \cite{Saff}. 
In our preceding paper \cite{TLMS} on the subject, we proved a general extension of the original conjecture. A crucial part of the paper \cite{TLMS} formulated a solution to the above correspondence problem when also the kernels, including $J=K_0$, are strictly concave and $\Ce^2$-smooth, see Corollary 9.3 in \cite{TLMS}.

\noindent
Already in that paper we mentioned two more sources of inspiration for the study, one being an approximation theory problem discussed and solved in its original setup by B.{} Bojanov \cite{Bojanov1979}. This asks for the solution of the following extremal problem of a classical approximation theory nature. Given a sequence of natural numbers $\nu_1,\ldots,\nu_n$ as prescribed multiplicities, find the monic, degree $N=\nu_1+\dots+\nu_n$ algebraic polynomial $P(t):=P(x_1,\ldots,x_n;t):=\prod_{j=1}^n (t-x_j)^{\nu_j}$ with least possible maximum norm on $[0,1]$. It is immediate that taking all $\nu_j=1$ we retrieve the 19th century extremal problem of Chebyshev. On the other hand taking maximum norms, i.e. maximum of absolute values, gives rise to an equivalent consideration of the (maximum value of) the logarithms: minimize in $x_j$ the maximum in $t$ of $\log\vert P(t)\vert=\sum_{j=1}^n\nu_j \log\vert t-x_j\vert$. So here we see the occurrence of a ``sum of translates'' function with $J\equiv 0$ and $K_j(t)=\nu_j\log\vert t\vert$, $ j=1,\ldots,n$.

Although this is formulated here for $[0,1]$, there is a version on the 
torus $\TT=\RR/\ZZ$. 
Then $T(t):=T(w_1,\ldots,w_{2n};t)=\prod_{j=1}^{2n} \sin^{\nu_j}\left(\pi(t-w_j)\right)$. If in the latter extremal problem we take $\nu_{2n-j}=\nu_j$, $(j=1,\ldots,n)$, then the problem gets a symmetric nature; and indeed, as is described in \cite[Sec.{} 13]{TLMS}, the extremal polynomial must have a symmetric arrangement of the nodes, so that $w_{2n-j}+w_j$ are all equal. (Obviously, the whole setup is invariant under a rotation, so that we may as well assume that all these sums are $0 \mod 1$.) Note that $\sin(\pi t)$ itself is \emph{not} a trigonometric polynomial on $\TT$, but \emph{a pair} of arbitrary translates of such ``trigonometric factors'' multiply together to one because $2\sin(\pi(t-a))\sin(\pi(t-b))=\cos(\pi(a-b))-\cos(2\pi t-\pi(a+b))$. Both setups were discussed in our paper \cite{TLMS}, describing the back and forth ``transfer'' between the algebraic polynomial and trigonometric polynomial cases. A key fact, however, was exploited there: that of symmetry of the trigonometric extremal problem in case of setting symmetric pairs of multiplicities, as is described above. If for some reason we fail to have this symmetry (as becomes the case when we introduce weighted norms $\Vert P\Vert _w:=\max_{t\in [0,1]} \vert P(t)w(t)\vert$, and the weights are allowed to be non-symmetric, like e.g. Jacobi weights $w(t)=t^\alpha(1-t)^\beta$), then the transference of results between the trigonometric and algebraic cases becomes intractable. That led us to study the interval and torus cases, and in particular the respective minimax questions, separately.

\medskip
Another resource of our study, which in fact was the crucial ignition for us in all these works, is an ingenious paper of P.{} Fenton \cite{FentonEnt}. He himself was interested in entire functions, trying to solve a nice conjecture of P.D.{} Barry. In the course of his proof, he arrived at a minimax problem, which---after taking logarithms, as we did above in the Bojanov question, and making a few reformulations to 
peel off 
the inessential ingredients---translated to a simple-looking minimax question about (weighted) sums of translates. Fenton could easily assume a number of technical conditions, like $\Ce^2$ differentiability, strict concavity, strictly positive second derivative etc., because his kernels arose from absolute values of root factors of analytic functions. Fenton's point was to describe configurations of minimality. He proved that under certain assumptions, the only essential additional condition being \emph{monotonicity} of the kernels, such minimax configurations are equioscillating ones, coincide with the maximin configurations, and all three exist uniquely.

Fenton solved Barry's conjecture, \cite{FentonEnt}, unknowingly of the previous solution by A.A.{} Goldberg \cite{Goldberg}, but we consider that his main achievement was his deep insight into the general minimax problem underlying the original question on entire functions, see \cite{Fenton}. With the help of his novel method later on he himself derived a number of results concerning the $\cos(\pi\rho)$ theorem of B.{} Kjellberg \cite{KjellbergCos}. The present work and the companion paper \cite{C} all grew out of this.

\medskip What we became aware of during our study is that the full solution of such minimax questions regarding sum of translates functions breaks down to two, in themselves interesting, different and essentially independent main issues, one being the existence and equioscillation property of minimax configurations, and another one the characterization and eventual uniqueness of solutions. Solution of the minimax questions and further properties e.g.{} comparisons between behavior of maxima $m_j$ for different node systems, are studied in detail in the companion paper \cite{C}. The key issue under scrutiny here is the uniqueness of equioscillating configurations in these minimax problems. In fact, what we will see, cf. the above Theorem \ref{thm:null}, is that not only equioscillating configurations are unique, but also all other ones (modulo the fixing of one coordinate, i.e. considering only the differences $m_j-m_{j-1}$, as described above). This was not considered in any of the previously mentioned papers, neither by Fenton, nor by Bojanov. As will be seen in \cite{C}, this additional uniqueness leads to thus far unnoticed very precise information regarding even the most classical, throughout investigated Chebyshev problem. While the uniqueness of extremal, equioscillating configurations is of course the most important fact, the uniqueness (and existence) of node systems for all other prescribed value tuples $(m_1-m_0,\ldots,m_n-m_{n-1})$ is both surprising and powerful. In the present paper the application will be of interpolation theoretic type, while in the companion paper \cite{C} we describe applications of different nature, relying on the full solution of the minimax problem (which is presented also in \cite{C}). Here we restrict ourselves to applications that can be derived solely by using our results on unique correspondence, i.e., homeomorphism (cf.{} Theorem \ref{thm:null} above).

\medskip We have encountered further surprises in the study of these questions. A very first one is the achievable generality, in particular of the homeomorphism results. First, if the kernels have singularity in the sense of $K_j(0)=-\infty$---a natural condition satisfied by e.g. the base case of $\log\vert t\vert$---then we will find that the local maxima $m_j(\yy)=\max_{[y_j,y_{j+1}]} F(\yy,\cdot)$ depend on the node system in a continuous way, virtually \emph{requiring no assumptions}, either on the continuity of the field $J$, or on $F(\yy,\cdot)$ itself. This very general fact justifies talking about the correspondences as homeomorphisms. Furthermore, the function $m_j$ can be proved to be locally Lipschitz continuous, hence almost everywhere differentiable, and we could even derive reasonable bounds on the derivatives, see Lemma \ref{lem:Demyanov-formula}. This technical step of ours in itself generalizes, for the non-differentiable setting, well-known formulae, see e.g., the book \cite{DemRubBook} by F.F.{} Demyanov and A.M.{} Rubinov, or \cite[Prop.{} 9.1]{TLMS}, for partial maxima  (over one variable) of differentiable bivariate functions. For the statement of our formulae, see Lemma \ref{lem:Demyanov-formula}, Corollary \ref{cor:derivative} and Remark \ref{rem:demyanov}. Note that in our setting the supremum $m_j$ need not be attained, and even if attained, this can happen at more than one points, nevertheless, the derivative of the $m_j$ with respect to $y_i$ can be still calculated. Also note that more, such as everywhere differentiability of the functions $m_j$, may fail to hold, see Section \ref{sec:examples}.

\medskip As said, differentiability assumptions could be eluded from the set of conditions. To deal with only Lipschitz continuous $m_j$ and still proving at least locally one-to-one correspondences was achieved by use of the substantial work of F.H.{} Clarke, \cite{ClarkeInv, ClarkeGenGrad}, on the general inverse function theorem for Lipschitz functions, see Section \ref{sec:Jacobi}.

\medskip To start with it all we first needed to find the proper domain $Y$ of admissible node systems (which then might correspond in a unique and continuous way to the arbitrary prescription of differences $(m_1-m_0,\ldots,m_n-m_{n-1})$). Our definition is possibly the most natural, even if exact description may seem somewhat complicated later on: We consider all points $\yy$ which provide finite values for all the $m_j(\yy)$. Then the way to establish unique correspondence via the mapping $\Phi:\yy\to\ (m_1-m_0,\ldots,m_n-m_{n-1})$ is by classical topology, establishing on the one hand local homeomorphism, and on the other hand properness. These two facts furnish global homeomorphism by a century old basic topology result, usually attributed to J.~Hadamard 
in the differentiable setting
(but also hard to properly reference out). For this purpose the connectedness of the regularity domain $Y$ needs to be established first, see Section \ref{sec:connectedness}.

\medskip In describing minimax results, not only the setup, but also necessary conditions vary between the two settings for the interval and the torus. As said above, we could establish a transfer in the case of the Bojanov extremal problem (and in case of a very general extension of it) in \cite{TLMS}---provided there is no outer field or weight $J$. To solve one case by transfer from the other becomes more difficult in the presence of $J$. This is not only a technical matter, for already Fenton showed that his result (for the interval case) fails when the condition of monotonicity on the kernels $K_j$ is simply dropped, while in \cite{TLMS} we derived the analogous results without monotonicity assumptions. In fact, for the torus case, i.e., for periodic kernel functions, one must get rid off monotonicity: There are no periodic and monotone kernel functions other than constants, and singularity singles out such functions, too.
It is non-trivial to find a bridge between the monotonicity assumption for the interval case and the excluded possibility of one such condition for the torus. However, the key was defining the condition \eqref{cond:PM} below, which postulates, essentially, that on $(0,1)$ only the \emph{periodized difference} $K(t)-K(t-1)$ behaves monotonically---more precisely, its derivative is at least a constant $c$. While assuming this with a positive value $c>0$ is still impossible on the torus, it provided us a non-trivial generalization of the work of Fenton (where simply $K'(t)\ge c>0$ was postulated). We succeeded with the next step only when found that something analogous can be done even for the torus case. Explaining the technical details we postpone to Section \ref{sec:periodic}, but remark here that only this crucial weakening of the conditions allowed us to give, a treatment which provides the homeomorphism theorem in both the torus and interval setup. This we note to exemplify that our quest for more general conditions is not only a l'art pour l'art generalization, but is a crucial need for a reasonable description of the issue.

\medskip Let us note once more that while we have here a reasonably general description of the homeomorphism question, covering both the interval and the torus setup, such a general description of the minimax type questions is not available yet. Our setup in this paper is indeed very general, with almost arbitrary fields (weights) $J$ and different kernels. Minimax questions for the torus with no field (or a field which is subject to the same conditions as kernels, e.g. concavity) is already given in \cite{TLMS}. In the companion paper \cite{C} we discuss the situation in depth, \emph{only on the interval}, when there is an upper semicontinuous, otherwise almost arbitrary field $J$ present, and the kernels are constant multiples of each other, i.e. $K_j=\nu_jK$ for some kernel $K$. The reader will not be surprised by recognizing the motivation coming from the Bojanov extremal problem. However, such conditions are not only technical simplifications. These indeed provide a variety of sharper intermediate and final results, which are not available in the utmost generality. This is one more reason why we separated the homeomorphism result, essentially complete and unlikely to have a more general form in the near future, from forthcoming minimax type results in the companion paper \cite{C}, which are still in the course of development. There is also one more reason to this, lying within the set of natural and interesting assumptions. Here we assumed $K_j(0)=-\infty$, a singularity condition without which the general homeomorphism result must necessarily fail (see Section \ref{sec:examples} for counterexamples).

\medskip However, minimax questions are of interest (and will be studied also by ourselves in another work in progress) even if such singularity conditions are not available---a direction fully out of scope here for the homeomorphism question. It turns out that homeomorphism holds in extremely general situations with singularity (and fail to hold without it)---while equioscillating, minimax and maximin points may still be unique (and coincide) at least under some additional, still very general conditions, even if singularity is not assumed.

\medskip With the above motivation the forthcoming Section \ref{sec:Preliminaries} leads the reader through the somewhat technical details of our basic definitions and conditions, and will formulate a version of our main result.

\section{Setting and preliminaries}\label{sec:Preliminaries}

A function $K:(-1,0)\cup (0,1)\to \bR$ will be called a \emph{kernel function}\footnote{The terminology used by Fenton in \cite{Fenton} is that $K$ is a \emph{cusp}, perhaps better fitting to his settings where the functions are not assumed to have the singularity condition \eqref{cond:infty} below, but rather the ``derivative singularity'' \eqref{eq:Jinftyprime-} and \eqref{eq:Jinftyprime+} appearing only later in Section \ref{sec:periodic} in this paper.} if it is concave on $(-1,0)$ and on $(0,1)$, and if it satisfies
\begin{equation}\label{eq:Kzero}
\lim_{t\downto 0} K(t) =\lim_{t\upto 0} K(t).
\end{equation}
By the concavity assumption these limits exist, and a kernel function has one-sided limits also at $-1$ and $1$. We set
\begin{equation*}
K(0):=\lim_{t\to 0}K(t),\quad K(-1):=\lim_{t\downto -1} K(t) \quad\text{and}\quad K(1):=\lim_{t\upto 1} K(t).
\end{equation*}
Note explicitly that we thus obtain the extended continuous function $K:[-1,1]\to \bR\cup\{-\infty\}=:\uR$, and that we still have $\sup K<\infty$. Also note that a kernel function is almost everywhere differentiable.

Further, we call the kernel $K$ \emph{monotone}\footnote{These conditions---and more, like $\Ce^2$ smoothness and strictly negative second derivatives---were assumed on the kernel functions in the ground-breaking paper of Fenton \cite{Fenton}.} if
\begin{equation}
\label{cond:monotone}\tag{M}
K \text{ is monotone decreasing on } (-1,0) \text{ and increasing on } (0,1).
\end{equation}
Actually, the following weaker condition for kernel functions will be relevant for us: There is $c\geq0$ such that
\begin{equation}
\label{cond:PM}\tag{PM$_c$}
K'(t)-K'(t-1) \ge c \quad \textrm{for almost all} \quad t\in (0,1).
\end{equation}
We will term this assumption \emph{periodized $c$-monotonicity}.

Another important property, which a kernel $K$ may or may not have, is when
the limit in \eqref{eq:Kzero} satisfies
\begin{equation}\label{cond:infty}
\tag{$\infty$}
K(0)=\lim_{t\to 0} K(t)=-\infty.
\end{equation}
If a kernel function fulfills this condition \eqref{cond:infty}, then it will be called\footnote{The terminology has some background reminding to convolution operators $\int f(s)K(t-s)ds$, where it is a frequent situation that $K$ has singularity at $0$; but convolution operators as such will not appear in the paper.} a \emph{singular kernel}. In this paper this will be a standing assumption, for the main result of the paper fails to hold without it, see Example \ref{example:notcont}.

Further, we will call a function $J:[0,1]\to\uR$ an \emph{external $n$-field function}\footnote{Again, the terminology of kernels and fields came to our mind by analogy, which in case of identical logarithmic kernels $K_j(t):=\log\vert t\vert$ and an external field $J(t)$ arising from a weight $w(t):=\exp(J(t))$ are indeed discussed in logarithmic potential theory. However, in our analysis no further potential theoretic notions and tools will be applied. This is so in particular because our analysis is far more general, allowing different and almost arbitrary kernels and fields; yet the resemblance to the classical settings of logarithmic potential theory should not be denied.}, or---if the value of $n$ is unambiguous from the context---simply a \emph{field} or \emph{field function}, if it is bounded above on $[0,1]$, and it assumes finite values at more than $n$ different points, where we count the points $0$ and $1$ with weight\footnote{The weighted counting makes a difference only for the case when $J^{-1}(\{-\infty\})$ contains the two endpoints; with only $n-1$ further interior points in $(0,1)$ the weights in this configuration add up to $n$ only, whence the node system is considered inadmissible.} $1/2$ only, while the points in $(0,1)$ are accounted for with weight $1$. Therefore, for a field function $J$ the set $(0,1)\setminus J^{-1}(\{-\infty\})$ has at least $n$ elements, and if it has precisely $n$ elements, then either $J(0)$ or $J(1)$ is finite.

\medskip Let $n\in \NN=\{1,2,\dots,\}$ be fixed. We consider the \emph{open simplex}
\begin{equation*}
S:=S_n:=\{\yy : \yy=(y_1,\dots,y_n)\in (0,1)^n,\: 0< y_1<\cdots <y_n<1\},
\end{equation*}
and its closure the \emph{closed simplex}
\begin{equation*}
\overline{S}:=\{\yy: \yy\in [0,1]^n,\: 0\leq y_1\leq \cdots \leq y_n\leq 1\}.
\end{equation*}

\medskip For given kernel functions $K_1,\dots, K_n$ and a field function $J$ consider the \emph{pure sum of translates function}
\begin{equation}\label{eq:puresum}
f(\yy,t):=\sum_{j=1}^n K_j(t-y_j)\quad (\yy\in \overline{S},\: t\in [0,1]),
\end{equation}
 and also the \emph{ (weighted) sum of translates function}
\begin{equation}\label{eq:Fsum}
F(\yy,t):=J(t)+\sum_{j=1}^n K_j(t-y_j)\quad (\yy\in \overline{S},\: t\in [0,1]).
\end{equation}

Note that the functions $J, K_1,\ldots,K_n$ can take the value $-\infty$, but not $+\infty$, therefore the sum of their translates can be defined meaningfully. Furthermore, if $g,h :A \to \uR$ are extended continuous functions on some topological space $A$, then their sum is extended continuous, too; therefore, $f:\oS \times [0,1] \to \uR$ is extended continuous. Note that for any $\yy\in \oS$ the function $f(\yy,\cdot)$ is finite valued on $(0,1)\setminus \{y_1,\dots, y_n\}$. Moreover, $f(\yy,0)=-\infty$ can happen only if $y_j=0$ and $K_j(0)=-\infty$ for some $j\in \{1,\dots,n\}$ or if $y_j=1$ and $K_j(-1)=-\infty$ for some $j\in \{1,\dots,n\}$. Analogous statement can be made about the equality $f(\yy,1)=-\infty$. Recall that $J$ is finite at more than $n$, i.e., at least at $n+1/2$ points in the above weighted sense, so, in particular, $J$ is finite on at least $n$ points of $(0,1)$. Thus either $F(\yy,\cdot)$ is finite valued at least on one point of $(0,1)$, or if not, then $y_1,\ldots,y_n$ are pairwise distinct, belong to $(0,1)$, all $J(y_j)\in \RR ~(j=1,\ldots,n)$ and still at least one of $J(0), J(1)$, so also one of $F(\yy,0), F(\yy,1)$, must be finite. Therefore, $F(\yy,\cdot)$ is not constant $-\infty$ and $\sup_{t\in[0,1]}F(\yy,t)>-\infty$\footnote{Note that our somewhat complicated-looking assumptions on the weighted count of points of finiteness of $J$ is \emph{the exact condition} to ensure this irrespective of the concrete choice of the kernels in general. The weights $1/2$ at the endpoints become more natural when seen from the perspective of the discussion of the periodic, i.e. the torus case in Section \ref{sec:periodic}, where $0\equiv 1 \mod 1$.}.

Further, for any fixed $\yy \in \oS$ and $t\ne y_1,\ldots,y_n$ there exists a relative (with respect to $\oS$) open neighborhood of $\yy \in \oS$ where $f(\cdot,t)$ is concave (hence continuous). Indeed, such a neighborhood is $B(\yy,\delta):=\{\xx\in \oS~:~ \Vert \xx-\yy\Vert <\delta\}$ with
\begin{equation*}
\delta:=\min_{j=1,\ldots,n} \vert t-y_j\vert,
\end{equation*}
where $\Vert \vv\Vert :=\max_{j=1,\ldots,n} \vert v_j\vert$.

We introduce the \emph{singularity set} of the field function $J$ as
\begin{equation}\label{eq:Xdef}
X:=X_J:=\{t\in [0,1]~:~J (t)=-\infty\},
\end{equation}
and note that $X^c:=[0,1]\setminus X$ has cardinality exceeding $n$ (in the above described, weighted sense), in particular $X\neq [0,1]$. Similarly, the singularity set of $F(\yy,\cdot)$ is
\begin{equation*}
\widehat{X}:=\widehat{X}(\yy):=\{t\in[0,1]~:~F(\yy,t)=-\infty\} \varsubsetneq [0,1].
\end{equation*}
Of course, $X \subseteq \widehat{X}(\yy)$, and if the kernels $K_1,\dots, K_n$ are all singular, then we have
\begin{equation*}
 \widehat{X}(\yy)\cap (0,1)= (X \cup \{y_1,\dots,y_n\})\cap(0,1),
\end{equation*}
and if additionally $K_1,\dots, K_n$ are finite valued on $\{-1,1\}$, then
\begin{equation*}
 \widehat{X}(\yy)= X \cup \{y_1,\dots,y_n\}.
\end{equation*}
Accordingly, for a given $\yy$, an interval $I\subseteq [0,1]$ with $I\subseteq \widehat{X}(\yy)$ will be called \emph{singular}.

Writing $y_0:=0$ and $y_{n+1}:=1$ we also set for each $\yy\in \overline{S}$ and $j\in \{0,1,\dots, n\}$
\begin{align*}
I_j(\yy)&:=[y_j,y_{j+1}],
\\ m_j(\yy)&:=\sup_{t\in I_j(\yy)} F(\yy,t),
\end{align*}
and
\begin{align*}
\mol(\yy)&:=\max_{j=0,\dots,n} m_j(\yy)=\sup_{t\in [0,1]}F(\yy,t).
\end{align*}
As has been said above, for each $\yy\in \oS$ we have that $\mol(\yy)=\sup_{t \in [0,1]} F(\yy,t) \in \RR$ is finite.
Recall that an interval $I\subseteq [0,1]$ is contained in $\widehat{X}(\yy)$, i.e., $I$ is singular, if and only if $F(\yy,\cdot)\vert_I\equiv -\infty$. In particular $m_j(\yy)=-\infty$ exactly when $I_j(\yy)\subseteq \widehat{X}(\yy)$. A node system $\yy$ is called \emph{singular} if there is $j\in \{0,1,\dots,n\}$ with $I_j(\yy)$ singular, i.e., $m_j(\yy)=-\infty$; and a node system $\yy\in \partial S= \oS\setminus S$ is called \emph{degenerate}. If the kernels are singular, then each degenerate node system is singular. Furthermore, for a non-degenerate node system $\yy$ we have $m_j(\yy)=-\infty$ if and only if $\rint I_j(\yy)\subseteq X$. Here $\rint$ denotes the relative interior of a set with respect to $[0,1]$.

\medskip\noindent A central role is played by the \emph{regularity set}
\begin{align}\label{eq:Ydef}
Y:=Y_n& :=Y_n(X):=\{\yy\in S: \text{$\yy$ is non-singular}\}\notag \\
&=\{\yy\in S: \text{$m_j(\yy)\neq-\infty$ for $j=0,1,\dots,n$}\}\notag\\
&=\{\yy\in S: \text{$I_j(\yy)\not\subseteq X \cup \Bigl({\textstyle\bigcup}_{1\le i \le n, ~K_i~ \text{singular}} ~\{y_i\}\Bigr)$ for $j=0,1,\dots,n$}\}.
\end{align}
In particular, if all the kernels $K_1,\dots, K_n$ are singular, we also have
\begin{align}\label{eq:Ydef-sing}
Y&=\{\yy\in S: \rint I_j(\yy)\not\subseteq X ~\text{for} ~j=0,1,\dots,n\}.
\end{align}
An important fact is that the regularity set does not depend on the kernel functions $K_1,\ldots,K_n$, except for the fact whether they are singular or not. Moreover, it only depends on the singularity set $X$ of $J$, but not on the actual function $J$ itself. In the case when all the kernel functions $K_1,\dots, K_n$ are singular, the regularity set is an open subset of the open simplex $S$, and we have $S=Y$ if and only if $X$ has empty interior.

We also introduce the \emph{interval maxima vector function}
\begin{equation*}
\mv(\ww):=(m_0(\ww),m_1(\ww),\ldots,m_n(\ww)) \in \uR^{n+1} \quad (\ww \in \oS)
\end{equation*}
and the both ways extended \emph{interval maxima difference function} or simply \emph{difference function}
\begin{align}\label{eq:diffdefi} \notag
\diff(\ww)& :=(m_1(\ww)-m_0(\ww), m_2(\ww)-m_1(\ww),\ldots, m_n(\ww)-m_{n-1}(\ww) )
\\ & =:(\diff_1(\ww),\ldots,\diff_n(\ww)) \in [-\infty,\infty]^{n},
\end{align}
whose maximal domain of definition is
\begin{align*}
{\mathcal D}:={\mathcal D}_\diff:=\{\ww \in \oS~:~ &\text{for each $i=1,\dots,n$ }\\
 &\text{either } m_i(\ww)\neq-\infty \text{ or } m_{i-1}(\ww)\neq-\infty \}.
\end{align*}
Note that $\mv:\oS\to \uR^{n+1}$ and $\diff:{\mathcal D}\to [-\infty,+\infty]^n$ are also extended continuous functions, a fact which is not immediately obvious due to the arbitrariness of $J$, and which will be proved in Lemma \ref{lem:mjcont2} below. From the above it follows that for $\ww\in S$ we have $\mv(\ww)\ne (-\infty,\ldots,-\infty)$, and in case $\mv(\ww)\not\in \RR^{n+1}$---that is, if some of the maxima $m_i(\ww)=-\infty$---then we must also have either $\ww \not\in {\mathcal D}_\diff$, or $\ww \in {\mathcal D}_\diff$ but $\diff(\ww)\not\in\RR^n$, some coordinate becoming (positive or negative) infinite.

Now we can state one of the main results of this paper.
\begin{theorem}\label{thm:homeo3}
Suppose that the singular kernel functions $K_1,\dots, K_n$ satisfy \eqref{cond:PM} for some $c>0$ and take an arbitrary $n$-field function $J$.
Then the difference function, restricted to $Y$, that is
\begin{equation}\label{eq:diffdef}
\diff\vert_Y : Y\to \RR^n,\quad \xx\mapsto (m_{1}(\xx)-m_0(\xx),m_{2}(\xx)-m_1(\xx),\dots,m_{n}(\xx)-m_{n-1}(\xx))
\end{equation}
is a homeomorphism between $Y$ and $\RR^n$. Moreover, $\Phi$ is locally bi-Lipschitz.
\end{theorem}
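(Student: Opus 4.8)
The plan is to deduce the global homeomorphism from two ingredients, following the classical Hadamard--Caccioppoli scheme: \emph{(i)} the map $\diff\vert_Y$ is a local homeomorphism (indeed locally bi-Lipschitz) on the open connected set $Y$, and \emph{(ii)} it is proper as a map $Y\to\RR^n$. Once these are in place, a standard covering-space argument (a proper local homeomorphism onto a simply connected, locally connected, locally compact Hausdorff target is a global homeomorphism) finishes the proof, and the bi-Lipschitz qualifier is inherited from the local statement. So the proof decomposes into: establishing that $Y$ is open and connected; proving local bi-Lipschitz invertibility at every point of $Y$; proving properness; and invoking the topological gluing lemma.

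\textbf{Step 1: $Y$ open and connected.} Openness is already noted in the excerpt (under the standing assumption that all $K_j$ are singular). Connectedness is exactly what the excerpt flags as needing a separate treatment (Section \ref{sec:connectedness}); I would simply cite that result here. The point is that although $Y$ is $S$ minus the ``bad'' node systems where some $I_j(\yy)$ is swallowed by $X\cup\{y_1,\dots,y_n\}$, one can connect any two admissible node systems by a path staying inside $Y$ — intuitively, by sliding the nodes so that no interval ever collapses onto a singular set.

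\textbf{Step 2: local bi-Lipschitz homeomorphism.} This is where the analytic substance lies, and I expect it to be the main obstacle. The derivative information comes from the Demyanov-type formula (Lemma \ref{lem:Demyanov-formula} and Corollary \ref{cor:derivative}): each $m_j$ is locally Lipschitz on $Y$, hence a.e.\ differentiable, and its partial derivatives with respect to the nodes are controlled by the one-sided derivatives of the kernels at the (possibly non-unique) maximizing points of $F(\yy,\cdot)$ on $I_j(\yy)$. From this one reads off the structure of the Clarke generalized Jacobian of $\diff$: it has a sign pattern making it (generalized) diagonally dominant, and condition \eqref{cond:PM} with $c>0$ is precisely what forces the relevant determinants to be bounded away from $0$ — this is the role of periodized $c$-monotonicity. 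I would then invoke Clarke's inverse function theorem for Lipschitz maps (\cite{ClarkeInv, ClarkeGenGrad}): if every matrix in the Clarke Jacobian $\partial\diff(\yy)$ is nonsingular, then $\diff$ is locally Lipschitz invertible near $\yy$ with locally Lipschitz inverse, i.e.\ locally bi-Lipschitz. The delicate points are: checking that the singularity $K_j(0)=-\infty$ keeps the maximizers away from the nodes (so the relevant one-sided derivatives are finite and the formula applies), and verifying the nonsingularity of \emph{every} element of the generalized Jacobian, not merely of some representative — this requires a uniform estimate exploiting $c>0$ across all possible selections of maximizing points.

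\textbf{Step 3: properness and conclusion.} I would show $\diff\vert_Y$ is proper by arguing that if $\yy^{(k)}\in Y$ leaves every compact subset of $Y$, then $\diff(\yy^{(k)})$ leaves every compact subset of $\RR^n$. A sequence escaping compacta in $Y$ either approaches $\partial S$ (two nodes merge, or a node hits $0$ or $1$) or approaches a configuration where some $I_j$ becomes singular; using extended continuity of $\mv$ and $\diff$ (Lemma \ref{lem:mjcont2}) together with the singularity of the kernels, in each case at least one coordinate $m_i-m_{i-1}$ tends to $\pm\infty$ (a collapsing interval drives its maximum to $-\infty$ because of the kernel singularity, while an adjacent interval keeps a finite max). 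Hence $\|\diff(\yy^{(k)})\|\to\infty$, giving properness. Finally, a proper local homeomorphism from a connected space onto $\RR^n$ is a covering map, and since $\RR^n$ is simply connected it is a homeomorphism; the local bi-Lipschitz property from Step 2 then upgrades this to a bi-Lipschitz homeomorphism, which is the assertion (with $\Phi:=\diff\vert_Y$). I would also double-check surjectivity is genuinely delivered here — it is, since a covering of $\RR^n$ is onto — and that no separate ``nonemptiness of $Y$'' issue arises, which is guaranteed by the field-function hypothesis ensuring $n+1$ points of finiteness.
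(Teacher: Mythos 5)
Your proposal is correct and follows essentially the same route as the paper: connectedness of $Y$ (Proposition \ref{prop:Yconn}), local bi-Lipschitz invertibility via the Demyanov-type derivative bounds, diagonal $c$-dominance of the (convex) Clarke Jacobian, and Clarke's inverse function theorem (Proposition \ref{prop:homeo2}), properness via extended continuity of the $m_j$ (Proposition \ref{prop:proper}), and finally the Hadamard-type global inversion result (the paper cites Ho's theorem). You correctly identified the one subtle point — that \emph{every} element of the generalized Jacobian must be nonsingular — which the paper resolves by showing the diagonally $c$-dominant matrices form a closed convex subset of $\GL_n(\RR)$ (Lemma \ref{lem:cdomconv}).
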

An immediate consequence is the following:
\begin{corollary}\label{cor:homeo3}
	Suppose that the singular kernel functions $K_1,\dots, K_n$ are strictly monotone decreasing on $(-1,0)$ and strictly monotone increasing on $(0,1)$, and take an arbitrary $n$-field function $J$.
	Then the difference function
	\begin{equation*}
	\diff\vert_Y : Y\to \RR^n,\quad \xx\mapsto (m_{1}(\xx)-m_0(\xx),m_{2}(\xx)-m_1(\xx),\dots,m_{n}(\xx)-m_{n-1}(\xx))
	\end{equation*}
	is a locally bi-Lipschitz homeomorphism between $Y$ and $\RR^n$.
\end{corollary}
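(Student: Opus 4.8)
The plan is to deduce the corollary directly from Theorem~\ref{thm:homeo3}: I would show that, under the stated hypotheses, each kernel $K_j$ automatically satisfies the periodized $c$-monotonicity condition \eqref{cond:PM} with \emph{some} $c=c_j>0$. Since $K_j'(t)-K_j'(t-1)\ge c_j\ge \min_i c_i$ a.e., all the $K_j$ then satisfy \eqref{cond:PM} with the common value $c:=\min_{1\le i\le n}c_i>0$, so we are exactly in the hypotheses of Theorem~\ref{thm:homeo3}, whose conclusion — that $\diff\vert_Y$ is a locally bi-Lipschitz homeomorphism of $Y$ onto $\RR^n$ — is word for word the assertion of the corollary. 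Thus everything reduces to the implication: a singular kernel that is concave, strictly decreasing on $(-1,0)$ and strictly increasing on $(0,1)$ satisfies \eqref{cond:PM} for some $c>0$.

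The heart of the matter is that the singularity forces the (monotone, almost everywhere defined) derivative of such a $K$ to blow up at $0$, i.e.\ $K'(t)\to+\infty$ as $t\downto 0$ and $K'(t)\to-\infty$ as $t\upto 0$. Indeed, if $K'$ stayed bounded above by some $L<\infty$ on a right neighbourhood $(0,t_0)$ of $0$, then for $0<t_1<t_0$ absolute continuity of $K$ on $[t_1,t_0]$ and monotonicity of $K'$ give $K(t_0)-K(t_1)=\int_{t_1}^{t_0}K'\le L\,t_0$, so $K(t_1)\ge K(t_0)-Lt_0$ would remain bounded below as $t_1\downto 0$, contradicting \eqref{cond:infty}; the statement at $0^-$ is symmetric (here one uses concavity on $(-1,0)$ and $K(0)=-\infty$).

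Given this, the remaining bookkeeping is routine and splits $(0,1)$ into two regimes. Using the blow-up at $0^+$ together with the monotonicity of $K'$ on $(0,1)$, pick $\delta\in(0,1)$ with $K'(t)\ge 1$ for a.e.\ $t\in(0,\delta)$; since $t-1\in(-1,0)$ and $K$ is decreasing there, $K'(t-1)\le 0$, whence $K'(t)-K'(t-1)\ge 1$ for a.e.\ $t\in(0,\delta)$. For $t\in[\delta,1)$ one invokes the other half of the hypothesis: strict monotonicity together with concavity on $(-1,0)$ forces $K'_+(\delta-1)<0$ (otherwise $K'\ge 0$ to the left of $\delta-1$, contradicting strict decrease), so with $\eta:=-K'_+(\delta-1)>0$ and concavity we get $K'(t-1)\le -\eta$ for a.e.\ $t\in[\delta,1)$; combined with $K'(t)\ge 0$ on $(0,1)$ this yields $K'(t)-K'(t-1)\ge \eta$ for a.e.\ $t\in[\delta,1)$. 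Hence \eqref{cond:PM} holds with $c:=\min\{1,\eta\}>0$, and Theorem~\ref{thm:homeo3} applies.

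The only genuinely nontrivial step I expect is the derivative blow-up at the singularity, and it rests squarely on \eqref{cond:infty}: without it, strict monotonicity alone gives merely $K'(t)-K'(t-1)\ge 0$, i.e.\ \eqref{cond:PM} with $c=0$, which is insufficient to run Theorem~\ref{thm:homeo3} — in line with the counterexamples of Section~\ref{sec:examples}, where the homeomorphism property is shown to fail when the singularity assumption is dropped.
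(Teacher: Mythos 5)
Your proposal is correct and follows exactly the route the paper intends: the corollary is presented as an immediate consequence of Theorem \ref{thm:homeo3}, obtained by checking that strict monotonicity (together with concavity on the two half-intervals) yields \eqref{cond:PM} with some $c>0$, and your verification of this implication --- including the derivative blow-up forced by \eqref{cond:infty} --- is sound. A minor remark: the blow-up is not actually needed for the regime near $0$, since strict increase and concavity already give $K'(t)\ge D_+K(\delta)>0$ for a.e.\ $t\in(0,\delta)$, so strict monotonicity alone delivers \eqref{cond:PM} with positive $c$ (singularity being required only as a hypothesis of Theorem \ref{thm:homeo3} itself); but your argument is equally valid.
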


In the course of the proof of Theorem \ref{thm:homeo3} we shall first establish basic properties, such as continuity of $\diff$, in Section \ref{sec:Basics}. Then in Section \ref{sec:connectedness} we to turn the 
pathwise connectedness 
of the set $Y$, see Proposition \ref{prop:Yconn}, and we prove that $\Phi\vert_Y$ is a proper map in Proposition \ref{prop:proper}. In Section \ref{sec:Lipschitz} we establish the Lipschitz continuity of $\Phi\vert_Y$, a property that is utilized in Section \ref{sec:Jacobi} to show that $\Phi\vert_Y$ is a local homeomorphism, see Proposition \ref{prop:homeo2}, thus leading finally to the proof of Theorem \ref{thm:homeo3}.
 An extension of the homeomorphism theorem under condition (PM$_0$) is given in Section \ref{sec:periodic}; this is of crucial importance in order to cover also the case of periodic kernel functions, i.e., the case of the torus. We collected several instructive examples in Section \ref{sec:examples} to highlight necessity of the---at first sight somewhat unmotivated---conditions and best possible nature of the results. Section \ref{sec:app} presents some applications to a number of various interpolation problems both for the algebraic and for the trigonometric polynomial case as well as for some generalized ``product systems''. Finally, in Section \ref{sec:preview} we preview some more serious consequences, which need, apart from the homeomorphism results of the paper, further arguments, different in nature. These further results will be presented in detail in our forthcoming companion paper \cite{C}, and will draw essentially from the homeomorphism theorems of the current work.

\section{Basic properties}\label{sec:Basics}

Our first aim is to prove that the functions $m_1,\dots, m_n:\oS\to\uR$ are continuous (in the extended sense), even though no continuity assumption on the field function $J$ has been posed. The next two technical lemmas will be useful not only for this purpose but will be important later, when we establish even stronger regularity properties of $m_j$.

\begin{lemma}\label{lem:preZ0}
Let $j\in\{1,2,\dots,n\}$ and suppose that the kernel function $K_j$ is singular. Let $J$ be an arbitrary $n$-field function. For every $\yy\in \overline{S}$ we have
\begin{equation*}
\lim_{\xx\to \yy,t\to y_j}F(\xx,t)=-\infty.
\end{equation*}
\end{lemma}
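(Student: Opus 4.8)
The plan is to bound $F(\xx,t)$ from above by a finite constant plus the single term $K_j(t-x_j)$, and then to exploit that this term tends to $-\infty$ because $K_j$ is a singular kernel, while everything else stays bounded above.

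First I would recall that for a field function $J$ one has $C_J:=\sup_{[0,1]}J<\infty$, and that for every kernel function $K_i$ the extension to $[-1,1]$ still satisfies $C_i:=\sup_{[-1,1]}K_i<\infty$ (noted right after the extension of kernels to $[-1,1]$). Hence, for any $\xx\in\oS$ and $t\in[0,1]$,
\[
F(\xx,t)=J(t)+\sum_{i=1}^n K_i(t-x_i)\le C_J+\sum_{i\ne j}C_i+K_j(t-x_j),
\]
where the inequality is read in $\uR$ and is trivial if any left-hand term equals $-\infty$.

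Next, as $(\xx,t)\to(\yy,y_j)$ inside $\oS\times[0,1]$ we have $x_j\to y_j$ and $t\to y_j$, so $s:=t-x_j\to 0$; moreover $s\in[-1,1]$ always (since $t,x_j\in[0,1]$), hence $s$ stays in the domain of the extended continuous function $K_j:[-1,1]\to\uR$. Because $K_j$ is singular, $K_j(0)=-\infty$, and extended continuity of $K_j$ at $0$ means exactly that for every $M>0$ there is $\delta>0$ with $K_j(s)<-M$ whenever $|s|<\delta$. Therefore $K_j(t-x_j)\to-\infty$, and by the displayed bound the right-hand side tends to $-\infty$ as well, which gives $\lim_{\xx\to\yy,\,t\to y_j}F(\xx,t)=-\infty$ (equivalently: for every $M>0$ there is a relative neighborhood of $(\yy,y_j)$ in $\oS\times[0,1]$ on which $F<-M$).

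I do not expect a genuine obstacle here: the only points needing (minor) care are keeping the argument $t-x_j$ of $K_j$ inside $[-1,1]$ — automatic — and interpreting the limit in the extended reals, so that ``a quantity bounded above by a constant plus a term going to $-\infty$'' is legitimately forced to $-\infty$. The boundedness from above of $J$ and of the remaining kernels $K_i$ ($i\ne j$) is precisely what makes this last step valid, and it is essentially the only place the hypothesis on $J$ is used.
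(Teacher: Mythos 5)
Your proposal is correct and is essentially the paper's own argument: the paper likewise bounds $F(\xx,t)$ by $\sup J+\sum_{i\ne j}\sup K_i+K_j(t-x_j)$ and uses the singularity \eqref{cond:infty} of $K_j$, merely writing out the $\delta$--$L$ quantifiers explicitly. No gap.
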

\begin{proof}
Take $L\in \RR$ arbitrary. Then we have to prove that there is $\delta>0$ such that for all $\xx \in \oS$ with $\Vert \xx-\yy\Vert \leq\delta$
\begin{equation*}
F(\xx,t)\leq L\qquad \text{for all}\quad t\in [y_j-\delta, y_{j}+\delta] \cap [0,1].
\end{equation*}

By the singularity condition \eqref{cond:infty} we can choose $0<\delta<1/2$ such that for every $s\in \RR$ with $\vert s\vert\leq 2\delta$ one has
\begin{align*}
	K_j(s)<-\sup J-\sum_{i=1\atop i\neq j}^n\sup K_i+L.
\end{align*}
Now, if $\Vert \xx-\yy\Vert \leq \delta$ and $t\in [y_j-\delta, y_{j}+\delta] \cap [0,1]$, then $\vert t-x_j\vert\leq 2\delta$. It follows that
\begin{align*}
F(\xx,t)&=K_j(x_j-t)+\sum_{i=1\atop i\neq j}^n K_i(x_i-t)+J(t)\\
&\leq \Bigl(-\sup J-\sum_{i=1\atop i\neq j}^n\sup K_i+L\Bigr)+\sum_{i=1\atop i\neq j}^n\sup K_i+\sup J=L.
\end{align*}
\end{proof}

\begin{lemma}\label{lem:preZ}
Suppose that the kernel functions $K_1,\dots, K_n$ are singular while $J$ is an arbitrary $n$-field function. For every $\yy\in \oS$ and for every $j\in \{0,1,\dots,n\}$ with $m_j(\yy)\neq -\infty$ there are $\delta>0$ and a closed interval $W:=W_j(\yy)$ such that for each $\xx\in \oS $ with $\Vert \xx-\yy\Vert \leq \delta$ we have
\begin{iiv}
\item  $W\subseteq  I_j(\xx)$ and $W$ has distance at least $\delta$ to $\{x_1,\dots, x_n\}$,
\item $m_j(\xx)=\sup_{t\in W} F(\xx,t)$.
\end{iiv}
\end{lemma}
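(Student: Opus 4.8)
We want to localize the maximum $m_j(\yy)$ to a compact interval $W$ that stays away from all nodes, and that remains "inside" $I_j(\xx)$ for $\xx$ near $\yy$. The first task is to produce the interval $W$. Since $K_1,\dots,K_n$ are singular, by Lemma 2.11 (\texttt{lem:preZ0}) applied at each node $y_i$ lying in $\overline{I_j(\yy)}$ (i.e. $i=j$ and $i=j+1$ when these are genuine interior nodes), the function $F(\xx,t)$ tends to $-\infty$ as $(\xx,t)\to(\yy,y_i)$. Because $m_j(\yy)>-\infty$, we can fix a value $L<m_j(\yy)$ (say $L:=m_j(\yy)-1$) and, using those limits, choose $\eta>0$ so small that $F(\xx,t)\le L$ whenever $\Vert \xx-\yy\Vert\le\eta$ and $t$ is within $2\eta$ of $y_j$ or of $y_{j+1}$. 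Set $W:=W_j(\yy):=[y_j+\eta,\ y_{j+1}-\eta]$ (intersected with $[0,1]$; when $j=0$ the left endpoint is just $\eta$ and similarly for $j=n$, and one checks $W\neq\emptyset$ after possibly shrinking $\eta$, because $m_j(\yy)>-\infty$ forces $I_j(\yy)$ to be nondegenerate). The point of the choice is that $\sup_{t\in I_j(\yy)\setminus W}F(\yy,t)\le L<m_j(\yy)$, so the supremum defining $m_j(\yy)$ is already attained (as a sup) over $W$, and the same inequality persists for $\xx$ close to $\yy$.

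Next I would choose $\delta\in(0,\eta/2]$ small enough to guarantee the two listed properties simultaneously. For (i): if $\Vert\xx-\yy\Vert\le\delta$ then $x_j\le y_j+\delta\le y_j+\eta$ and $x_{j+1}\ge y_{j+1}-\delta\ge y_{j+1}-\eta$, so $W=[y_j+\eta,y_{j+1}-\eta]\subseteq[x_j,x_{j+1}]=I_j(\xx)$; moreover any $t\in W$ satisfies $|t-x_j|\ge\eta-\delta\ge\eta/2$ and $|t-x_{j+1}|\ge\eta/2$, while for indices $i\notin\{j,j+1\}$ the node $x_i$ lies outside $(x_j,x_{j+1})\supseteq W$ at distance at least $\dots$ (here one uses that $y_i$ is bounded away from the closed interval $\overline{I_j(\yy)}$, so after shrinking $\delta$ we keep distance at least $\delta$, or even $\eta/2$, from $W$). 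Thus $W$ has distance at least $\delta$ to $\{x_1,\dots,x_n\}$, proving (i). For (ii): with $\delta\le\eta$ the bound from the first paragraph gives $F(\xx,t)\le L<m_j(\yy)$ for every $t\in I_j(\xx)$ within $2\eta$ — hence certainly within the actual gap — of $x_j$ or $x_{j+1}$; so on $I_j(\xx)\setminus W$ we have $F(\xx,\cdot)\le L$. On the other hand $\sup_{t\in W}F(\xx,t)\ge$ some quantity close to $m_j(\yy)>L$ — to make this rigorous I would either invoke lower semicontinuity / the continuity results being built up, or, more elementarily, note that $F(\yy,\cdot)$ restricted to the fixed compact set $W$ (which avoids the nodes of $\yy$) is continuous and has sup $=m_j(\yy)$; picking $t_0\in W$ with $F(\yy,t_0)>L$ and using that $F(\cdot,t_0)$ is continuous near $\yy$ (the concavity/continuity remark preceding \eqref{eq:Xdef}, valid since $t_0\neq y_i$ for all $i$) gives $F(\xx,t_0)>L$ for $\Vert\xx-\yy\Vert$ small. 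Combining, $\sup_{t\in W}F(\xx,t)>L\ge\sup_{t\in I_j(\xx)\setminus W}F(\xx,t)$, whence $m_j(\xx)=\sup_{t\in I_j(\xx)}F(\xx,t)=\sup_{t\in W}F(\xx,t)$, which is (ii).

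The main obstacle is the bookkeeping around the endpoint cases and the inter-node distances: one must treat $j=0$ and $j=n$ (where only one endpoint of $I_j$ is an actual node) separately, and one must verify that shrinking $\delta$ can simultaneously (a) trap $W$ inside $I_j(\xx)$, (b) keep $W$ at distance $\ge\delta$ from every $x_i$, and (c) preserve the inequality $F(\xx,\cdot)\le L$ near the endpoints. All three are monotone in $\delta$, so a single small $\delta$ works; the only genuinely nontrivial input is Lemma 2.11, which supplies the uniform (in $\xx$) decay of $F$ near the singular nodes and is exactly what lets the "$\le L$" estimate hold for the moving node system $\xx$ rather than only for $\yy$.
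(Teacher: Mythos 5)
Your overall strategy matches the paper's: use Lemma \ref{lem:preZ0} to make $F(\xx,\cdot)$ uniformly small near the singular nodes bounding $I_j$, cut those neighbourhoods off to form $W$, and anchor the supremum with one point of $W$ at which $F$ stays above the cut-off level by continuity in the node variable. However, there is a genuine error in your definition of $W$ in the boundary cases $j=0$ and $j=n$: you set the left endpoint of $W$ equal to $\eta$ when $j=0$ (and symmetrically for $j=n$), i.e.\ you also excise a neighbourhood of the interval endpoint $t=0$. But $t=0$ is not a node; Lemma \ref{lem:preZ0} gives no decay of $F$ there, and since $J$ is arbitrary the supremum $m_0$ may live only near $0$. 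Concretely, for $n=1$, $K_1(t)=\log\vert t\vert$, $J\equiv 0$, $y_1=1/2$, the function $F(\yy,t)=\log\vert t-1/2\vert$ is strictly decreasing on $I_0=[0,1/2]$, so $m_0(\yy)=\log(1/2)$ while $\sup_{t\in[\eta,1/2-\eta]}F(\yy,t)=\log(1/2-\eta)<m_0(\yy)$, and (ii) fails. The paper avoids this by taking the endpoint $a:=0$ for $j=0$ and $b:=1$ for $j=n$, so that $W$ is trimmed only at those ends of $I_j$ which are genuine singular nodes. With that correction your argument goes through.

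Two smaller points. First, your claim that $F(\yy,\cdot)$ restricted to $W$ is continuous is false for an arbitrary field function $J$ (only the pure sum $f(\yy,\cdot)$ is continuous there); fortunately you do not need it: the existence of $t_0\in W$ with $F(\yy,t_0)>L$ already follows from $\sup_{I_j(\yy)\setminus W}F(\yy,\cdot)\le L<m_j(\yy)$, and then continuity of $f(\cdot,t_0)$ together with the fixed finite value $J(t_0)$ gives $F(\xx,t_0)>L$ for $\xx$ near $\yy$, exactly as you intend. Second, to see that $W$ keeps distance at least $\delta$ from the nodes $x_i$ with $i\notin\{j,j+1\}$, do not argue that $y_i$ is bounded away from $\overline{I_j(\yy)}$ (for $\yy\in\oS$ one may have $y_{j-1}=y_j$); instead use the ordering $x_i\le x_j$ for $i\le j$ and $x_i\ge x_{j+1}$ for $i\ge j+1$, which reduces everything to the distance from $W$ to $x_j$ and $x_{j+1}$. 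For comparison, the paper fixes an anchor point $u\in I_j(\yy)$ with $F(\yy,u)>-\infty$ first and calibrates the decay level to $J(u)+\min_{\Vert\xx-\yy\Vert\le\eta}f(\xx,u)-1$, whereas you fix $L:=m_j(\yy)-1$ first and find $t_0$ afterwards; both variants are sound.
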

\begin{proof}
Since $m_j(\yy)>-\infty$, there is $u\in I_j(\yy)$ with  $F(\yy,u) >-\infty$, entailing $J(u)>-\infty$, too. By the singularity condition \eqref{cond:infty} we also conclude that $u \ne y_i$ for each $i=1,\ldots,n$.
Let $0<\eta<\min\{\vert u-y_i\vert:i=1,\dots,n\}$ and set $L:=\min\{ f(\xx,u):\Vert \xx-\yy\Vert \leq \eta\}$, where the minimum exists and is finite since $f(\cdot, u)$ is finite valued and concave, hence continuous on the compact set $\overline{B}(\yy,\eta):=\{\xx:\Vert \xx-\yy\Vert \leq\eta\}$. By Lemma \ref{lem:preZ0} we can choose $\delta$ with {$0<\delta<\eta/2$} and so small that for each $i=1,\dots,n$ we have
\begin{align}\label{eq:yiketdelta}
F(\xx,t)\leq J(u)+L-1\qquad \text{for all}\quad t\in [y_j-2\delta, y_{j}+2\delta] \cap [0,1]
\end{align}
and for every $\xx$ with $\Vert \xx-\yy\Vert \leq 2\delta$.

If $j>0$, set $a:=y_j+2\delta$ and otherwise $a:=0$, and similarly if $j<n$ let $b:=y_{j+1}-2\delta$ and otherwise $b:=1$. Note that $u \in [a,b]$ as $\delta<\eta/2$. Further, for $\Vert \xx-\yy\Vert \leq \delta $ we obtain when $j>0$ that $x_j+\delta \leq a$, and if $j<n$ that $ b\leq x_{j+1}-\delta$ so that in particular $[a,b]$ has distance at least $\delta$ to $\{x_1,\dots, x_n\}$ and $[a,b]\subseteq I_j(\xx)$, i.e., (i) is satisfied for $W:=[a,b]$.

Since $u\in [a,b]\subseteq I_j(\xx)$, we have
\begin{equation*}
J(u)+L\leq J(u)+f(\xx,u)=F(\xx,u) \leq m_j(\xx).
\end{equation*}
If $j=0$, then $ I_j(\xx)\setminus [a,b]=(b,x_1]=(y_1-2\de,x_1] = I_0(\xx)\cap (y_1-2\delta, y_1+\delta]$, if $j=n$ then $I_j(\xx)\setminus [a,b] =[x_n,a)= I_n(\xx)\cap (y_n-\delta, y_n+2\delta ]$ and if $0<j<n$, then $I_j(\xx)\setminus [a,b]= I_j(\xx)\cap ([y_{j}-\delta, y_{j}+2\delta)\cup (y_{j+1}-2\delta, y_{j+1}+\delta])$. Altogether---taking into account \eqref{eq:yiketdelta}---we conclude for each $t\in I_j(\xx)\setminus [a,b]$ that
\begin{equation}\label{eq:FoutsideW}
F(\xx,t)\leq J(u)+L-1\leq m_j(\xx)-1,
\end{equation}
and (ii) follows.
\end{proof}

\begin{lemma}\label{lem:mjcont2}
Suppose that the kernel functions $K_1,\dots,K_n$ are singular, while $J$ is an arbitrary $n$-field function. For each $j\in \{0,1,\dots, n\}$ the function
\begin{equation*}
m_j:\overline{S}\to \uR
\end{equation*}
is continuous (in the extended sense).
\end{lemma}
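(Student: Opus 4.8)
The plan is to prove continuity of $m_j$ at an arbitrary point $\yy\in\oS$ by splitting into the two cases dictated by the value $m_j(\yy)$.

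\textbf{Case 1: $m_j(\yy)=-\infty$.} Here $I_j(\yy)\subseteq\widehat X(\yy)$, i.e.\ $F(\yy,\cdot)\equiv-\infty$ on $I_j(\yy)$. We must show $m_j(\xx)\to-\infty$ as $\xx\to\yy$. Fix $L\in\RR$. The idea is to cover the compact interval $I_j(\yy)$ by finitely many small open intervals on each of which $F(\xx,t)\le L$ for all $\xx$ near $\yy$. There are two types of points $s\in I_j(\yy)$: if $s=y_i$ for some $i$ with $K_i$ singular (in particular all of $y_j,y_{j+1}$ of this form, after noting that when $\yy\in S$ the endpoints $y_j,y_{j+1}$ are themselves nodes or $0,1$; the genuinely relevant singular points are the $y_i$ lying in $I_j(\yy)$, plus the endpoints treated via the boundary behaviour of $K$), then Lemma~\ref{lem:preZ0} gives a neighbourhood in both $\xx$ and $t$ on which $F\le L$; if $s\ne y_i$ for all $i$, then $F(\yy,s)=J(s)+f(\yy,s)=-\infty$ forces $J(s)=-\infty$, and since $f(\cdot,\cdot)$ is extended continuous and bounded above near $(\yy,s)$ while $J$ is bounded above globally, we get $F(\xx,t)=J(t)+f(\xx,t)$; here $J(t)$ need not be small, so this naive estimate fails — instead one uses that $J\le\sup J$ and $f(\xx,t)\to f(\yy,s)=-\infty$ is \emph{not} available either since $s$ may be a node of $\xx$. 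The cleaner route: since $f$ is extended continuous on the compact set $\overline B(\yy,\delta_0)\times I_j(\yy)$ (for small $\delta_0$, intersected with $\oS\times[0,1]$) and takes the value $-\infty$ exactly on the relevant node-singularities, and $J\le\sup J<\infty$, one shows $\sup_{t\in I_j(\xx)}F(\xx,t)\to-\infty$. Actually the correct and simplest argument is: suppose not; then there are $\xx_k\to\yy$ and $t_k\in I_j(\xx_k)$ with $F(\xx_k,t_k)\ge L$; passing to a subsequence $t_k\to t^*\in I_j(\yy)$; then $J(t_k)\le\sup J$ so $f(\xx_k,t_k)\ge L-\sup J$; but $f$ is extended continuous so $f(\yy,t^*)\ge L-\sup J>-\infty$, forcing $t^*\ne y_i$ for all $i$ with $K_i$ singular; since all kernels are singular, $t^*\notin\{y_1,\dots,y_n\}$, hence $t^*\in\rint I_j(\yy)$ and $F(\yy,t^*)=J(t^*)+f(\yy,t^*)$. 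Now $\liminf_k J(t_k)=-\infty$ is \emph{not} automatic — this is precisely where one needs that $I_j(\yy)$ being singular means $J\equiv-\infty$ on $\rint I_j(\yy)$, so $J(t^*)=-\infty$, contradicting nothing directly because $J$ is not continuous. The honest fix: use Lemma~\ref{lem:preZ0}-style reasoning only near nodes, and on the complement use that for $t$ at distance $\ge\rho$ from all nodes, $f(\xx,t)$ is bounded \emph{below} uniformly, but then $F(\xx_k,t_k)\ge L$ with $t_k$ bounded away from nodes forces (via $f(\xx_k,t_k)\to f(\yy,t^*)\in\RR$) that $\limsup J(t_k)\ge L-f(\yy,t^*)>-\infty$; yet each such $t_k$ lies in $\rint I_j(\yy)$-close region where $J=-\infty$ on a \emph{dense} set but $t_k$ itself could be an exceptional point of finiteness — however $J$ has at most countably many points of finiteness is false too. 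I will instead argue: since $\yy\in S$ and $m_j(\yy)=-\infty$, by \eqref{eq:Ydef-sing}, $\rint I_j(\yy)\subseteq X$; pick any $\rho>0$; for $\xx$ with $\|\xx-\yy\|\le\delta$ small, $I_j(\xx)$ differs from $I_j(\yy)$ only near the endpoints $y_j,y_{j+1}$, which are nodes with singular kernels, so Lemma~\ref{lem:preZ0} handles $t$ within $\delta'$ of $y_j$ or $y_{j+1}$; for $t$ in the middle part $[y_j+\delta',y_{j+1}-\delta']\subseteq\rint I_j(\yy)\subseteq X$, we have $J(t)=-\infty$, hence $F(\xx,t)=-\infty$ regardless of $\xx$. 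Thus $m_j(\xx)=\max$ over the two small end-neighbourhoods only, each $\le L$ by Lemma~\ref{lem:preZ0}. This closes Case~1. (For $\yy\in\partial S$ one uses that singular kernels make degenerate node systems singular, and an analogous covering argument with Lemma~\ref{lem:preZ0} applied to coalescing nodes.)

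\textbf{Case 2: $m_j(\yy)=:\mu>-\infty$.} Apply Lemma~\ref{lem:preZ} to get $\delta>0$ and a closed interval $W=W_j(\yy)$, at distance $\ge\delta$ from all $x_i$, with $W\subseteq I_j(\xx)$ and $m_j(\xx)=\sup_{t\in W}F(\xx,t)$ for all $\xx\in\overline B(\yy,\delta)\cap\oS$. On the compact set $\{\xx:\|\xx-\yy\|\le\delta\}\times W$ (intersected with $\oS\times[0,1]$) the function $f$ is finite-valued, concave in $\xx$ for each fixed $t$, hence continuous; more carefully, $f$ is jointly continuous there (being a finite sum of continuous $K_i(t-x_i)$ with $t-x_i$ bounded away from $0$). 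Therefore $f$ is uniformly continuous on that compact set: given $\ve>0$ there is $\delta_1\in(0,\delta]$ so that $\|\xx-\yy\|\le\delta_1$ implies $|f(\xx,t)-f(\yy,t)|\le\ve$ for all $t\in W$. Since $F(\xx,t)=J(t)+f(\xx,t)$ and the same $J(t)$ appears for both $\xx$ and $\yy$, we get $|F(\xx,t)-F(\yy,t)|\le\ve$ for all $t\in W$ (this is valid in $\uR$: if $J(t)=-\infty$ both sides are $-\infty$). Taking suprema over $t\in W$ and using Lemma~\ref{lem:preZ}(ii) for both $\xx$ and $\yy$ gives $|m_j(\xx)-m_j(\yy)|\le\ve$, proving continuity at $\yy$.

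\textbf{Main obstacle.} The delicate point is Case~1 — specifically, controlling $F(\xx,t)$ for $\xx$ near $\yy$ and $t$ in (the $\xx$-dependent interval) $I_j(\xx)$ when $m_j(\yy)=-\infty$, because $J$ is only bounded above with no continuity, so one cannot get smallness of $F$ from smallness of $f$ alone except exactly at points where $J=-\infty$. The resolution is the observation that $I_j(\xx)$ moves by at most $O(\|\xx-\yy\|)$ and its endpoints are node-singularities, so Lemma~\ref{lem:preZ0} dominates a fixed neighbourhood of the endpoints uniformly, while the stable interior portion sits inside $X=J^{-1}(\{-\infty\})$ (by the characterization \eqref{eq:Ydef-sing}) on which $F\equiv-\infty$ identically. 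Assembling a finite subcover and taking the max yields the bound $m_j(\xx)\le L$; the boundary case $\yy\in\partial S$ needs the extra remark that all kernels being singular forces degenerate systems to be singular, so one can still apply Lemma~\ref{lem:preZ0} at the coalescence points. The rest is routine uniform continuity.
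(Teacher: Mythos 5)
Your overall strategy coincides with the paper's: your Case 2 (the case $m_j(\yy)>-\infty$, via Lemma \ref{lem:preZ} and uniform continuity of $f$ on $B\times W$) is verbatim the paper's argument, and the final form of your Case 1 — split $I_j(\xx)$ into a middle part lying in $X$, where $F\equiv-\infty$ identically, and small end-parts near node-singularities killed by the mechanism of Lemma \ref{lem:preZ0} — is also exactly what the paper does.

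There is, however, a genuine gap in Case 1 at the boundary of the index range and at degenerate node systems. You assert that the endpoints of $I_j(\yy)$ ``are nodes with singular kernels, so Lemma \ref{lem:preZ0} handles $t$ within $\delta'$ of $y_j$ or $y_{j+1}$''; but for $j=0$ the left endpoint is $y_0=0$ and for $j=n$ the right endpoint is $y_{n+1}=1$, and these are \emph{not} nodes, so Lemma \ref{lem:preZ0} does not apply to them. One must argue separately that the points of $I_j(\xx)$ near $0$ (resp.\ $1$) either lie in $\rint I_0(\yy)\subseteq X$ (when $\rint I_0(\yy)\neq\emptyset$) or, in the degenerate situation $y_1=0$ (resp.\ $y_n=1$), are within $2\delta$ of the genuine node $y_1$ (resp.\ $y_n$); the paper devotes a full paragraph to precisely this reduction (``the relevant index $i$ here can be taken different from $0$ and $n+1$''). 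Relatedly, your treatment of $\yy\in\partial S$ is a single parenthetical sentence, yet the lemma claims continuity on all of $\oS$, and continuity at degenerate node systems is what is actually invoked later (in the proof of Proposition \ref{prop:proper}); the covering argument does extend to $\partial S$, but the cases where $I_j(\yy)$ collapses to a point or where $I_j(\yy)$ is non-degenerate while other nodes coalesce must be written out, not waved at. Finally, the chain of abandoned attempts preceding your final argument in Case 1 should be excised — only the last argument is the proof, and leaving the false starts in obscures it.
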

\begin{proof}
Let $j\in \{0,1,\dots,n\}$ and $\yy\in \overline{S}$. First, we prove continuity of $m_j$ at $\yy$ in the case when $m_j(\yy)\neq -\infty$. Let $\varepsilon >0$ be arbitrarily given. Take $W=W_j(\yy)$, $\delta>0$ as furnished by Lemma \ref{lem:preZ} and set $B:=\{\xx\in \oS:\Vert \xx-\yy\Vert \leq \delta\}$. Since the pure sum of translates function $f$ is uniformly continuous on $B\times W$, there is $\eta>0$ such that for every $\xx\in B$ with $\Vert \xx-\yy\Vert \leq\eta$ and for every $t\in W$ we have
\begin{equation*}
\vert f(\xx,t)-f(\yy,t)\vert<\varepsilon.
\end{equation*}
For $t\in W$, and $\xx\in B$ satisfying $\Vert \xx-\yy\Vert \le \eta$ we thus have
\begin{equation*}
m_j(\xx)\geq F(\xx,t)=f(\xx,t)+J(t)\geq f(\yy,t)-\varepsilon+J(t)=F(\yy,t)-\ve.
\end{equation*}
Taking supremum on the right for $t\in W$, by Lemma \ref{lem:preZ} (ii) we obtain $m_j(\xx)\ge m_j(\yy) - \varepsilon$.
The inequality $m_j(\yy)\geq m_j(\xx)-\ve$ follows by same reasonings, in view of $\sup_{t\in W} F(\xx,t)=m_j(\xx)$.

\medskip\noindent Next suppose that $m_j(\yy)=-\infty$. This implies that $\inter I_j(\yy)=(y_j,y_{j+1})\subseteq \rint I_j(\yy) \subseteq X$. Therefore, if $\Vert \xx-\yy\Vert \le \de$, then $(I_j(\xx)\setminus X) \subseteq ([y_j-\de,y_j] \cup [y_{j+1},y_{j+1}+\de])\cap [0,1]$, so that for any $t \in(I_j(\xx)\setminus X)$ we have a point $y_i$ with $\vert t-y_i\vert\le \de$, where either $i=j$ or $i=j+1$.

Moreover, the relevant index $i$ here can be taken different from $0$ and $n+1$. Indeed, for $j=i=0$ the segment $[y_0-\de,y_0]$ consists of $[-\de,0)$, lying fully outside of $[0,1]$, and $\{0\}$. If $\rint I_0(\yy)\neq\emptyset$, then $\{0\}$ belongs to $\rint I_0(\yy) \subseteq X$, and $I_j(\xx)\setminus X \subseteq [y_{1},y_{1}+\de]\cap [0,1]$, so $i=1$ is a good choice. On the other hand, if $\rint I_0(\yy)=\emptyset$, then $I_j(\yy)=I_0(\yy)=\{0\}$ and $y_1=y_{j+1}=0$, so that we can take $i=1$ for the only possible point $t=0$ in $I_j(\xx)\setminus X$.
 Similarly, for $j=n$ and $i=n+1$ the segment $[y_{n+1},y_{n+1}+\de]=[1,1+\de]$ is outside of $[0,1]$ save the endpoint $t=1$, which either also belongs to $\rint I_n(\yy)\subset X$, or $I_n(\yy)$ degenerates to $\{1\}$, that is $y_n=1$, and we can take $i=n$.

Summing up, if $t\in \left(I_j(\xx)\setminus X\right)$, then there is $1\le i \le n$ such that $\vert t-y_i\vert\le \de$, and therefore also $\vert t-x_i\vert\le 2\de$.

Here we can refer to upper boundedness of the functions $J,K_1,\dots, K_n$ the condition of singularity \eqref{cond:infty}. According to this, for any $L\in \RR$ and for all $i=1,\ldots,n$ there are $\de_i>0$ such that
\begin{equation*}
K_i(s) < L_i:=L- \sup J - \sum_{\ell=1, \ell\ne i}^n \sup K_\ell \qquad {\rm whenever} \quad \vert s\vert \le \de_i.
\end{equation*}
It follows that with $0<\de\le\frac12 \min_{i=1,\ldots,n} \de_i$ we have for all $t$ with $\vert t-x_i\vert\le 2\de$ the inequality
\begin{equation*}
F(\xx,t) = J(t)+ \sum_{\ell=1, \ell\ne i}^n K_\ell (t-x_\ell) + K_i(t-x_i) \le \sup J + \sum_{\ell=1, \ell\ne i}^n \sup K_\ell +L_i =L.
\end{equation*}
By the above, for sufficiently small choice of $\de>0$ and all node systems $\xx\in\oS$ with $\Vert \xx-\yy\Vert \le \de$ all points $t\in I_j(\xx)\setminus X$ satisfy $F(\xx,t)\le L$ (while for the rest i.e. for points of $I_j(\xx)\cap X$ we have $F(\xx,t)=-\infty$.) Therefore, $m_j(\xx)=\sup_{I_j(\xx)} F(\xx,\cdot) \le L$, and, as $L$ was arbitrary, we conclude $\lim_{\de\to 0} m_j(\xx)=-\infty$, as wanted.
\end{proof}

\begin{lemma}[\bf{Z-q lemma}]\label{lem:Zq}
Suppose that the kernel functions $K_1,\dots, K_n$ are singular while $J$ is an arbitrary $n$-field function. Let $j\in \{0,1,\dots,n\}$. For every $q>0$ and for every $\yy\in \oS$ with $m_j(\yy)\neq -\infty$ there are $\eta>0$ and a set $Z_j(\yy,q) \subseteq I_j(\yy)$ such that for each $\xx\in \oS $ with $\Vert \xx-\yy\Vert \leq \eta$ we have
\begin{iiv}
	\item $Z_j(\yy,q) \subseteq \rint I_j(\xx)$, more specifically $Z_j(\yy,q) \subseteq  I_j(\xx)$ and $Z_j(\yy,q) $ has distance at least $\eta$ to $\{x_1,\dots, x_n\}$,
	\item $m_j(\xx)=\sup_{t\in Z_j(\yy,q) } F(\xx,t)$,
	\item $F(\xx,t) \geq m_j(\xx)-q>-\infty$ for every $t \in Z_j(\yy,q) $.
\end{iiv}
\end{lemma}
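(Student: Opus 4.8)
The plan is to strengthen Lemma~\ref{lem:preZ} by replacing the interval $W_j(\yy)$ with a suitable superlevel set of $F(\yy,\cdot)$ contained in it, and then to read off all three assertions from the uniform continuity of the pure sum of translates function $f$ on a compact product set, in the spirit of the proof of Lemma~\ref{lem:mjcont2}.

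First I would apply Lemma~\ref{lem:preZ} to $\yy$ (legitimate since $m_j(\yy)\neq-\infty$): it produces a closed interval $W:=W_j(\yy)\subseteq I_j(\yy)$ and a $\delta_0>0$ such that for all $\xx\in\oS$ with $\Vert\xx-\yy\Vert\le\delta_0$ one has $W\subseteq I_j(\xx)$, $W$ has distance at least $\delta_0$ from $\{x_1,\dots,x_n\}$, and $m_j(\xx)=\sup_{t\in W}F(\xx,t)$; in particular, taking $\xx=\yy$, $\sup_{t\in W}F(\yy,t)=m_j(\yy)\in\RR$. Since every $t\in W$ stays at distance $\ge\delta_0$ from the nodes, $f$ is finite and continuous, hence uniformly continuous, on the compact set $(\overline{B}(\yy,\delta_0)\cap\oS)\times W$, so I may fix $\eta\in(0,\delta_0]$ with $\vert f(\xx,t)-f(\yy,t)\vert<q/8$ for all $t\in W$ whenever $\Vert\xx-\yy\Vert\le\eta$. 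Finally I put
\[
Z_j(\yy,q):=\{\,t\in W:\ F(\yy,t)\ge m_j(\yy)-q/2\,\},
\]
a nonempty subset of $W\subseteq I_j(\yy)$ (nonempty because $\sup_{t\in W}F(\yy,t)=m_j(\yy)$).

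Now fix any $\xx\in\oS$ with $\Vert\xx-\yy\Vert\le\eta$. Property (i) is immediate from Lemma~\ref{lem:preZ}(i) and $\eta\le\delta_0$: $Z_j(\yy,q)\subseteq W\subseteq I_j(\xx)$, and $W$ lies in $[0,1]$ at distance $\ge\delta_0\ge\eta$ from $\{x_1,\dots,x_n\}$, which places $W$, hence $Z_j(\yy,q)$, inside $\rint I_j(\xx)$ in each of the cases $j=0$, $0<j<n$, $j=n$. For the rest, exactly as in Lemma~\ref{lem:mjcont2} the uniform estimate gives $\vert m_j(\xx)-m_j(\yy)\vert\le q/8$ (apply $\sup_{t\in W}$ to $F(\xx,t)\le F(\yy,t)+q/8$ and symmetrically, using $m_j=\sup_W F$). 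Hence for $t\in Z_j(\yy,q)$ one has $F(\xx,t)>F(\yy,t)-q/8\ge m_j(\yy)-5q/8\ge m_j(\xx)-3q/4>m_j(\xx)-q>-\infty$, which is (iii) and shows $m_j(\xx)\in\RR$. For (ii), $\sup_{Z_j(\yy,q)}F(\xx,\cdot)\le\sup_{W}F(\xx,\cdot)=m_j(\xx)$ is clear, while for each $\varepsilon'\in(0,q/4)$ a point $t^*\in W$ with $F(\xx,t^*)>m_j(\xx)-\varepsilon'$ satisfies $F(\yy,t^*)>F(\xx,t^*)-q/8>m_j(\xx)-\varepsilon'-q/8\ge m_j(\yy)-q/4-\varepsilon'>m_j(\yy)-q/2$, so $t^*\in Z_j(\yy,q)$ and $\sup_{Z_j(\yy,q)}F(\xx,\cdot)>m_j(\xx)-\varepsilon'$; letting $\varepsilon'\downto0$ yields (ii).

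The one step that requires genuine care is (ii): one must ensure that the approximate maximizers of $F(\xx,\cdot)$ on $I_j(\xx)$ cannot escape the \emph{fixed} set $Z_j(\yy,q)$ as $\xx$ varies near $\yy$. This is exactly why the threshold $q/2$ in the definition of $Z_j(\yy,q)$ must be chosen strictly larger than the uniform-continuity budget $q/8$; with that slack built in, everything reduces to the elementary estimates above, and no further input — not even Lemma~\ref{lem:mjcont2} itself — is needed.
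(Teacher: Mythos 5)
Your proof is correct and follows essentially the same route as the paper's: apply Lemma~\ref{lem:preZ} to get the interval $W$, define $Z_j(\yy,q)$ as a superlevel set of $F(\yy,\cdot)$ on $W$ (the paper uses threshold $m_j(\yy)-3q/4$, you use $m_j(\yy)-q/2$; both work), and transfer via the uniform continuity of $f$ on the product set. The only cosmetic differences are that you derive the estimate $\vert m_j(\xx)-m_j(\yy)\vert\le q/8$ inline from $m_j=\sup_W F$ instead of citing Lemma~\ref{lem:mjcont2}, and you prove (ii) by showing approximate maximizers land in $Z_j(\yy,q)$ rather than (as the paper does, contrapositively) that points of $W\setminus Z_j(\yy,q)$ have $F(\xx,\cdot)$ strictly below $m_j(\xx)$.
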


\begin{proof}
	Let $j\in \{0,1,\dots,n\}$ be given as in the assertion, let $\delta>0$ and $W=[a,b]$ be as yielded by Lemma \ref{lem:preZ}. Recall that then $\sup_{t \in W} F(\xx,t)=m_j(\xx)$ for all $\xx\in B:=\{\xx\in \oS:\Vert \xx-\yy\Vert \leq \delta\}$. 

Set $Z_j(\yy,q) :=\{t\in [a,b]:F(\yy,t)\geq m_j(\yy)-3q/4\}$. Note on passing that the inclusion $Z_j(\yy,q) \subseteq W$ automatically provides (i) of the statement with any $\eta<\delta$.

In view of continuity of $m_j$ (see Lemma \ref{lem:mjcont2}), there is $\eta_1\in (0,\delta)$ such that for each $\Vert \xx-\yy\Vert \leq\eta_1$, we have $\vert m_j(\yy)-m_j(\xx)\vert<q/8$.

Also, because of uniform continuity of $f$ on $B\times[a,b]$, there exists an $\eta_2>0$ such that
\begin{equation*}
\vert f(\yy,t)-f(\xx,t)\vert< q/8\quad\text{for all $t\in[a,b]$ and for all $\xx\in B$ with $\Vert \xx-\yy\Vert <\eta_2$}.
\end{equation*}
Now, if $t\in [a,b]\setminus Z_j(\yy,q) $, then
\begin{equation*}
F(\yy,t)<m_j(\yy)-3q/4,
\end{equation*}
and for each $\xx\in B$ with $\Vert \xx-\yy\Vert <\eta:=\min(\eta_1,\eta_2)<\de $ we conclude
\begin{align*}
F(\xx,t)&=f(\xx,t)+J(t)\leq f(\yy,t)+q/8+J(t)=F(\yy,t)+q/8\\
&<m_j(\yy)-3q/4+q/8<m_j(\xx)+q/8-3q/4 +q/8 = m_j(\xx)-q/2.
\end{align*}
Since $\sup_{t\in [a,b]}F(\xx,t)=m_j(\xx)$ according to Lemma \ref{lem:preZ} (ii), it follows that
\begin{equation*}
\sup_{t\in Z_j(\yy,q) }F(\xx,t)=m_j(\xx),
\end{equation*}
i.e., (ii) holds.

Finally, for $t\in Z_j(\yy,q) $ and $\xx$ with $\Vert \xx-\yy\Vert <\eta$ we have
\begin{align*}
F(\xx,t)&=f(\xx,t)+J(t)\geq f(\yy,t)-q/8+J(t)=F(\yy,t)-q/8\\
&\geq m_j(\yy)-3q/4-q/8\geq m_j(\xx)-q/8-3q/4-q/8=m_j(\xx)-q.
\end{align*}
This establishes assertion (iii) for $Z_j(\yy,q) $.
\end{proof}

\begin{remark}\label{rem:usc}\begin{abc}
	\item
Note that, when $J$ is upper semicontinuous, then for any $q>0$ the sets $Z_j(\yy,q)$ constructed in Lemma \ref{lem:Zq} are closed.
\item For given $\yy\in S$ with $m_j(\yy)>-\infty$ and for any $q>q'>0$ the sets $Z_j(\yy,q)$ and $Z_j(\yy,q')$ yielded by the proof of Lemma \ref{lem:Zq} satisfy
\begin{equation*}
Z_j(\yy,q')\subseteq Z_j(\yy,q).
\end{equation*}
As a consequence
\begin{equation*}
Z_j(\yy):= \bigcap_{q>0} \overline{Z_j(\yy,q)}
\end{equation*}
is a non-empty, compact set. Moreover, if $J$ is upper semicontinuous, then by (a)
\begin{equation*}
Z_j(\yy)= \bigcap_{q>0} {Z_j(\yy,q)}.
\end{equation*}
\item Suppose that $J$, so $F(\yy,\cdot)$, too, is upper semicontinuous. Since for $t\in Z_j(\yy,q)$ we have $F(\yy,t)\geq m_j(\yy)-q$, it follows that for $t\in Z_j(\yy)$
\begin{equation*}
F(\yy,t)=m_j(\yy).
\end{equation*}
\end{abc}
\end{remark}

\begin{remark}
Note that the condition of concavity of $K_1,\dots, K_n$ is not utilized anywhere in this section, only that the functions $K_j:[-1,1]\to\uR$ are (extended) continuous with $K(0)=-\infty$, and real-valued continuous on $(-1,0)\cup (0,1)$. Accordingly, we record here that the results remain true in this more general situation; a fact that will play no role in this paper.
\end{remark}

\section{Connectedness of $Y$ and properness of $\Phi\vert_Y$}\label{sec:connectedness}

\begin{proposition}
\label{prop:Yconn}
Let $n\in \NN$ and $X\subset [0,1]$ be any subset of the interval. Then the set $Y:=Y_n(X):=\{\yy \in S~:~ \rint I_j(\yy) \not\subseteq X~j=0,1,\ldots,n\}$ is an open, 
pathwise connected 
set. In particular, the regularity set $Y=Y_n$ belonging to any $n$-field function $J$ and $n$ singular kernel functions $K_1,\ldots,K_n$ is an open, 
pathwise connected set.
\end{proposition}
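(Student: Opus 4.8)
The plan is to prove openness and pathwise connectedness separately. Openness of $Y$ is essentially built into the description: fix $\yy\in Y$, so for each $j$ the relative interior $\rint I_j(\yy)$ contains some point $t_j\notin X$. Since $t_j$ lies strictly inside $I_j(\yy)$ (or is an endpoint $0$ or $1$ with a small one-sided neighbourhood still inside), a sufficiently small perturbation of $\yy$ keeps $t_j$ inside $\rint I_j(\xx)$; taking the minimum over the finitely many $j$ gives a ball $B(\yy,\rho)\subseteq Y$. This is the easy part.

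For pathwise connectedness, the main idea is to reduce an arbitrary $\yy\in Y$ to a fixed reference configuration by an explicit path staying in $Y$. First I would observe that $X^c$ has, in the weighted sense, more than $n$ elements; in particular one may pick $n+1$ points $0\le \xi_0<\xi_1<\cdots<\xi_n\le 1$ with each open interval $(\xi_{j},\xi_{j+1})$ meeting $X^c$ — more carefully, one fixes $n$ interior nodes $\xi_1<\cdots<\xi_n$ together with witness points $w_j\in (\xi_j,\xi_{j+1})\setminus X$ for $j=0,\dots,n$ (using the cardinality hypothesis on $X^c$, including the weighting of the endpoints). Call this reference node system $\boldsymbol\xi$; it lies in $Y$. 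The claim then is that every $\yy\in Y$ can be joined to $\boldsymbol\xi$ inside $Y$, which suffices since pathwise connectedness is established once all points connect to one fixed point.

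To build the path from $\yy$ to $\boldsymbol\xi$ I would proceed in two stages. Stage one: move the coordinates one at a time, or all together along the straight segment $t\mapsto (1-t)\yy + t\boldsymbol\xi$, checking that no intermediate configuration leaves $Y$. The difficulty is that the straight segment need not stay in $Y$: an interval $I_j$ might momentarily collapse into $X$. To handle this I would instead argue locally and use a "sliding witness" scheme: since $Y$ is open, it is enough to show it is locally pathwise connected together with being connected, or — more robustly — to exhibit for each $\yy\in Y$ a path to $\boldsymbol\xi$ obtained by first enlarging every interval $I_j(\yy)$ so that it strictly contains its witness point $w_j$ (pushing nodes apart monotonically, which can only help keep each $\rint I_j\not\subseteq X$ once it already contains a point outside $X$, because enlarging an interval preserves that a fixed point lies in its interior), then, with all witnesses safely interior, moving the nodes affinely to $\boldsymbol\xi$ while keeping each witness $w_j$ trapped in $\rint I_j$ throughout. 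Concretely, one can route the $j$-th node through a value lying strictly between $\max(y_j,\xi_j,w_{j-1})$-type bounds and $\min(\ldots,w_j)$-type bounds, processing nodes in an order (say left to right) that guarantees such a gap always exists; at each step only one coordinate moves, monotonically, and the witness points certify membership in $Y$ at every intermediate time. Continuity of the resulting piecewise-linear path is clear, and concatenating the finitely many segments gives a continuous path $\yy\leadsto\boldsymbol\xi$ in $Y$.

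The main obstacle is precisely this routing argument: one must choose, for each $\yy$, an ordering and intermediate target values for the nodes so that at no time does any $\rint I_j$ get squeezed entirely into $X$. The key enabling observation that makes it go through is monotone in nature — \emph{once $\rint I_j(\xx)$ contains a point $w_j\notin X$, any motion of the endpoints $x_j,x_{j+1}$ that does not move them past $w_j$ keeps $w_j\in\rint I_j(\xx)$}, so it suffices to never let a node cross the relevant witness point. Since the $n+1$ witnesses $w_0<w_1<\cdots<w_n$ are strictly separated and the nodes $\xi_j$ of the target also satisfy $w_{j-1}<\xi_j<w_j$, there is always room to move node $j$ from $y_j$ to $\xi_j$ through the nonempty open interval $(w_{j-1},w_j)$, performing the moves in an order (e.g.\ first all nodes that must increase, scanned from the right, then all that must decrease, scanned from the left, or simply one coordinate at a time after an initial "spreading" step) that preserves strict monotonicity $x_1<\cdots<x_n$ throughout. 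This reduces the whole proposition to a finite, explicit, and routine verification, and once it is done for arbitrary $\yy$ the set $Y$ is pathwise connected; the final sentence of the proposition then follows from the identity \eqref{eq:Ydef-sing} identifying the regularity set of any $n$-field function and $n$ singular kernels with exactly this set $Y_n(X)$.
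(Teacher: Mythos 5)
Your openness argument matches the paper's (which simply calls it evident), and your connectedness strategy is sound but genuinely different from the paper's. The paper argues by induction on $n$: it first reduces to the case where the two node systems share the first coordinate (by moving that single coordinate linearly, which only affects $I_0$ and $I_1$, each of which keeps a witness), and then freezes $x_1$ and applies the induction hypothesis to the rescaled configuration on $[x_1,1]\cong[0,1]$ with singularity set $\alpha^{-1}(X\cap[x_1,1])$. Your route --- connect every $\yy$ to one fixed reference system $\boldsymbol\xi$ interlacing with $n+1$ witnesses $w_0<\cdots<w_n$ in $X^c$, moving one coordinate at a time --- avoids the induction and the homothety bookkeeping, at the price of having to justify the order of the moves. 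That justification does go through for the order you name (all increasing nodes processed from the right, then all decreasing nodes from the left): when $x_j$ moves toward $\xi_j$, exactly one interval shrinks ($I_j$ if $x_j$ increases, $I_{j-1}$ if it decreases), and its target witness ($w_j$, resp.\ $w_{j-1}$) is already trapped in its relative interior throughout the move because the already-processed neighbours satisfy $x_{j+1}\ge\xi_{j+1}>w_j$ (resp.\ $x_{j-1}\le\xi_{j-1}<w_{j-1}$), while every other interval only grows and keeps whatever witness it had. You should write this verification out rather than declare it routine, since it is the entire content of the proposition.

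Two points need repair. First, the preliminary ``spreading'' step --- ``first enlarging every interval $I_j(\yy)$ so that it strictly contains its witness point $w_j$'' --- is impossible as stated: the intervals $I_0,\dots,I_n$ partition $[0,1]$, so pushing nodes apart enlarges some intervals only by shrinking their neighbours, and getting all $w_j$ simultaneously into the respective $\rint I_j$ is exactly the interlacing problem you are trying to solve, not a harmless preprocessing step. Drop it; the ordered one-coordinate moves above do the whole job. Second, the proposition is stated for an \emph{arbitrary} $X\subset[0,1]$, so the weighted cardinality hypothesis on $X^c$ is not available when you build $\boldsymbol\xi$; instead, either note that $Y=\emptyset$ is trivially pathwise connected, or extract the $n+1$ witnesses $w_0<\cdots<w_n$ from the relative interiors $\rint I_0(\yy^0),\dots,\rint I_n(\yy^0)$ of any fixed $\yy^0\in Y$ (allowing $w_0=0$ or $w_n=1$) and take $\boldsymbol\xi=\yy^0$ as the reference.
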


\begin{proof}
The set $Y$ is evidently open, cf.{} also Remark \ref{rem:absint} below.

We will prove its 
pathwise connectedness 
by induction on $n$, with the understanding that in the background for any given $n\in\NN$ the set $Y_n:=Y_n(X)$ depends on the singularity set $X_n$, suitably and arbitrarily prescribed for each $n$.

Let $n=1$ and take $\xx,\yy\in Y_1$. Then $\xx=(x_1)$ and $\yy=(y_1)$ are in $(0,1)=S_1$. The equality case $x_1=y_1$ meaning $\xx=\yy$, we can assume without loss of generality that say $x_1<y_1$.

In view of $\xx, \yy \in Y_1$ we have $\rint I_0(\xx)=[0,x_1)\not\subseteq X$ and $\rint I_1(\yy)=(y_1,1]\not\subseteq X$. For $s\in [0,1]$ set $\zz(s):=(1-s)\xx+s \yy$. Then $\rint I_0(\xx)=[0,x_1)\subseteq \rint I_0(\zz(s))$ and $\rint I_1(\yy)=(y_1,1]\subseteq \rint I_1(\zz(s))$ so that $\zz(s)\in Y$ for every $s\in[0,1]$. This proves the 
pathwise connectedness 
of $Y_n$ for $n=1$.

\medskip
Suppose that the statement concerning the 
pathwise connectedness 
holds for some $n\in \NN$. If $Y_{n+1}$ is empty, then there is nothing to prove, so take $\xx,\yy\in Y_{n+1}\subseteq S_{n+1}$. In the first step we assume $x_1=y_1$ and write $a:=x_1=y_1$ and $b:=1$. Note that $\rint I_1(\xx)=(x_1,x_2]\subseteq (x_1,1]$ does not belong to $X$. Consider the points $\tilde\xx=(x_2,\ldots,x_n,x_{n+1})$ and $\tilde \yy=(y_2,\ldots,y_n,y_{n+1})$, where by $Y_{n+1}\subseteq S_{n+1}$ we necessarily have $a<x_2<\ldots<x_{n+1}<1$ and $a<y_2<\ldots<y_{n+1}<1$. Moreover, a moment's thought reveals that for any $\tilde\zz=(z_2,\ldots,z_n,z_{n+1})\in (a,b)^n$ we have $\zz:=(z_1,z_2,\ldots,z_n,z_{n+1}):=(a,z_2,\ldots,z_n,z_{n+1}) \in Y_{n+1}$ if and only if $\rint [z_i,z_{i+1}]\not \subseteq X\cap [a,b]$ for all $i=1,\ldots,n+1$.

We can now homothetically transform this configuration in $[a,b]$ to the interval $[0,1]$ by using $\alpha(t):=(1-t)a+tb \in [a,b] \Leftrightarrow t\in [0,1]$. So we consider $\xx^*:=(\alpha^{-1}(x_2),\ldots,\alpha^{-1}(x_n),\alpha^{-1}(x_{n+1}))$ and $ \yy^*:=(\alpha^{-1}(y_2),\ldots,\alpha^{-1}(y_n),\alpha^{-1}(y_{n+1}))$, both in $S_n$, and $X^*:=\alpha^{-1}(X\cap [a,b]) \subsetneq [0,1]$. Obviously, for $Y^*_n:=Y_n(X^*)$ belonging to $X^* \subseteq [0,1]$, we have $\zz^*\in Y^*_n \Leftrightarrow \zz:=(a,\alpha(z^*_2),\ldots,\alpha(z^*_{n+1})) \in Y_{n+1}(X)$.

By the induction hypothesis $Y_n^*$ is 
pathwise connected, 
so that the points $\xx^*, \yy^*$ can be connected by a continuous arc $\zz^*(s) ~(s\in[0,1])$ within $Y_n^*$. It follows that the arc $\zz(s):=(a,\alpha(z^*_2(s)),\ldots,\alpha(z^*_n(s)),\alpha(z^*_{n+1}(s)) ~(s\in[0,1])$ connects $\xx$ and $\yy$ within $Y_{n+1}$.

Next suppose that $x_1 \neq y_1$, say $x_1<y_1$.
Consider the continuous path $\zz(s):=(sx_1+(1-s)y_1,y_2,\ldots,y_n,y_{n+1})$ connecting $\yy$ and $\ww:=(x_1,y_2,\ldots,y_n,y_{n+1})$ when $s$ ranges over $[0,1]$. Obviously, $I_j(\zz(s))=I_j(\yy)$ for $j=2,\ldots,n$, so that in particular $\rint I_j(\zz(s))$ is not contained in $X$. Moreover, $\rint I_0(\zz(s))\supset \rint [0,x_1]$, and the latter set is not contained in $X$ since $\xx \in Y_{n+1}(X)$. Similarly, $\rint I_1(\zz(s)) \supset \rint [y_1,y_2]$ which is not contained in $X$ given that $\yy \in Y_{n+1}(X)$. Therefore, for all $s \in [0,1]$, the points $\zz(s)$ belong to $Y_{n+1}(X)$.

It remains to refer to the above settled first case for the points $\xx$ and $\ww$, with equal first coordinates $x_1$: there is a path joining these within $Y_{n+1}$. We obtain that $\xx$ and $\yy$ can be connected by a continuous path. The statement is proved.
\end{proof}
As a byproduct of one of the main results of the paper, Corollary \ref{cor:homeo3}, we have that the regularity set $Y=Y_n$ corresponding to an $n$-field function, is actually homeomorphic to $\RR^n$ (under the interval maxima difference function for an arbitrarily chosen singular, monotone and strictly concave kernel function $K=K_1=\cdots=K_n$). As a consequence, $Y$ is also simply connected.

Recall that a mapping $\ff:A\to B$ between two Hausdorff topological spaces $A$ and $B$ is called \emph{proper}, if the inverse image $\ff^{-1}(Q)$ of any compact set $Q\subseteq B$ is compact in $A$.

\begin{proposition}[{\bf Properness of the difference function}]\label{prop:proper} Suppose that the kernel functions $K_1,\dots, K_n$ are singular and the field function $J$ is arbitrary. Then the difference function $\diff:Y\to \RR^n$ is proper 
and
	\begin{equation*}
	\lim_{\xx\to\partial Y}\Vert \diff(\xx)\Vert =\infty.
	\end{equation*}
\end{proposition}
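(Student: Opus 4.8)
The plan is to prove the second assertion — that $\Vert\diff(\xx)\Vert\to\infty$ as $\xx\to\partial Y$ — and then deduce properness from it. Recall that $\overline Y$ is compact (being a closed subset of $\overline S$), so $\partial Y=\overline Y\setminus Y$ is compact too; here "$\xx\to\partial Y$" means along any sequence $\xx^{(k)}\in Y$ with $\operatorname{dist}(\xx^{(k)},\partial Y)\to 0$, equivalently (by compactness) with some subsequence converging to a point $\yy\in\partial Y$. Once the limit assertion is established, properness follows by a standard argument: given a compact $Q\subseteq\RR^n$, the set $\diff^{-1}(Q)$ is closed in $Y$ by continuity of $\diff$ (Lemma \ref{lem:mjcont2}); if it were not compact it would contain a sequence converging to some $\yy\in\overline S\setminus Y$. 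That limit point lies either in $\partial Y$ (the boundary within $S$, together with the non-singular part of $\partial S$) or in the singular part; in the first case $\Vert\diff(\xx^{(k)})\Vert\to\infty$ contradicts boundedness of $Q$, and in the second case one sees directly that some $m_i(\xx^{(k)})\to-\infty$ while the neighbouring one stays bounded (or also tends to $-\infty$ but at a different rate), again forcing $\Vert\diff(\xx^{(k)})\Vert\to\infty$. So it suffices to prove the displayed limit.

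So fix a sequence $\xx^{(k)}\in Y$ with $\xx^{(k)}\to\yy\in\partial Y$. By definition of $Y$ and \eqref{eq:Ydef-sing}, the limit point $\yy$ fails the regularity condition: there is an index $j\in\{0,1,\dots,n\}$ with $\rint I_j(\yy)\subseteq X$, i.e.\ $m_j(\yy)=-\infty$. By the continuity of $m_j$ (Lemma \ref{lem:mjcont2}) we get $m_j(\xx^{(k)})\to-\infty$. The idea is now that at least one \emph{neighbouring} interval maximum, $m_{j-1}$ or $m_{j+1}$, must fail to go to $-\infty$ as fast — in fact I claim there is an index $i$ with $m_i(\yy)>-\infty$ and an adjacent index $i'$ (differing by $1$) with $m_{i'}(\yy)=-\infty$. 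Indeed, not \emph{all} the $m_\ell(\yy)$ can be $-\infty$: this would force $\rint I_\ell(\yy)\subseteq X$ for every $\ell$, hence $(0,1)\setminus\{y_1,\dots,y_n\}\subseteq X$, so $X^c\subseteq\{y_1,\dots,y_n,0,1\}$, contradicting that $X^c$ has more than $n$ points in the weighted sense (this is exactly where the cardinality hypothesis on $J$ enters). Since the set $\{\ell : m_\ell(\yy)=-\infty\}$ is nonempty (contains $j$) and proper, it has a "boundary": there are adjacent indices $i,i'$ with $m_i(\yy)\in\RR$, $m_{i'}(\yy)=-\infty$. Then $|i-i'|=1$, so $\diff_{\max(i,i')}(\yy)=m_{i'}(\yy)-m_i(\yy)$ or $m_i(\yy)-m_{i'}(\yy)$ is $\pm\infty$; and by continuity of $m_i,m_{i'}$, the corresponding coordinate $\diff_{\max(i,i')}(\xx^{(k)})=\pm(m_{i'}(\xx^{(k)})-m_i(\xx^{(k)}))$ tends to $\mp\infty$ in absolute value, whence $\Vert\diff(\xx^{(k)})\Vert\to\infty$.

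The main obstacle — and the only genuinely delicate point — is the bookkeeping in the previous paragraph: one must be careful that "$m_i(\yy)\in\RR$" is available for \emph{some} $i$ adjacent to the singular block, and this rests precisely on the weighted-count hypothesis on $J$ (the reason the endpoints $0,1$ are counted with weight $1/2$), together with the fact that for a node system in $\overline S$ at most $n$ of the points $y_1,\dots,y_n$ are "used up". A secondary subtlety is handling limit points $\yy\in\partial S$ (degenerate node systems): if some kernels are singular then every degenerate node system is singular, so such $\yy$ are covered by the same argument with $m_j(\yy)=-\infty$ for a $j$ with $y_j=y_{j+1}$ (the interval $I_j$ collapsing to a point where all singular kernels blow up), and one re-runs the adjacency argument. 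Everything else is immediate from the already-established continuity of the $m_j$'s and elementary compactness.
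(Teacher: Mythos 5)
Your proposal is correct and follows essentially the same route as the paper: at a boundary point $\yy\in\partial Y$ some $m_j(\yy)=-\infty$ while not all interval maxima can be $-\infty$ (by the weighted-count hypothesis on $J$, as established already in Section \ref{sec:Preliminaries}), so there is a pair of adjacent indices with one maximum finite and one infinite, and the extended continuity of the $m_j$ from Lemma \ref{lem:mjcont2} forces the corresponding coordinate of $\diff$ to blow up. The only cosmetic difference is the order (you prove the limit statement first and deduce properness, the paper does the reverse), and your case split between ``$\partial Y$'' and ``the singular part'' is unnecessary since any limit of a sequence from the open set $Y$ that leaves $Y$ lies in $\partial Y=\overline{Y}\setminus Y$.
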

\begin{proof}
Let $Q\subseteq \RR^n$ be a compact set. First, $\diff:Y\to\RR^n$ is finite valued and continuous, hence $R:=\diff^{-1}(Q)$ is a (relatively) closed set in $Y$. It remains to see that $R$ is bounded away from the boundary of $Y$, or, what is equivalent 
(by compactness of $\overline{Y}$), 
that no sequence from $R$ can converge to a boundary point $\yy \in \partial Y$.

So assume that $\yy\in \partial Y$ is a boundary point of $Y$ and $\yy_k\to \yy$. We need to show that $\diff(\yy_k)\in Q$ cannot hold for all $k\in \NN$. In fact, we will show that $(\diff(\yy_k))$ is unbounded and, moreover, $\Vert \Phi(\yy_k)\Vert \to \infty$.

Since $\yy\in \partial Y$, there is an $i\in \{0,1,\dots,n\}$ such that $m_i(\yy)=-\infty$. However, for any $\xx \in \oS$ there is a $j\in \{0,1,\dots,n\}$ such that $m_j(\xx)$ is finite (because $F(\xx,\cdot)$ is not identically $-\infty$). Therefore, there also exist some neighboring indices $\ell, \ell+1\in \{0,1,\dots,n\}$ such that either $m_\ell(\yy)=-\infty$ and $m_{\ell+1}(\yy)>-\infty$ or conversely $m_\ell(\yy)>-\infty$ and $m_{\ell+1}(\yy)=-\infty$.

But according to Lemma \ref{lem:mjcont2} the functions $m_\ell, m_{\ell+1}$ are extended continuous, so that we have $\lim_{k\to\infty} m_\ell(\yy_k)=m_\ell(\yy)$ and $\lim_{k\to\infty} m_{\ell+1}(\yy_k)=m_{\ell+1}(\yy)$, which leads to $\lim_{k\to\infty} \Vert \Phi(\yy_k)\Vert \ge \lim_{k\to\infty} \vert m_\ell(\yy_k)-m_{\ell+1}(\yy_k)\vert=\infty$, so in particular $(\diff(\yy_k))$ is unbounded, and the proof is complete.
\end{proof}

\section{Lipschitz continuity and derivative estimates for $\Phi$}\label{sec:Lipschitz}

In what follows we shall need the following elementary facts: If $K:(a,b)\to \RR$ is a concave function, then its one-sided derivatives $D_-K$, $D_+ K$ exist at each point in $(a,b)$, $K$ is (locally) absolutely continuous, both $D_-K$ and $D_+K$ are non-increasing and $D_-K \geq D_+K$ everywhere in $(a,b)$. Moreover, $D_-K$ is upper semicontinuous, while $D_+K$ is lower semicontinuous on $(a,b)$. Note also that Condition \eqref{cond:PM} for a kernel function can be reformulated equivalently, e.g.{}, as $D_{-}K(t)-D_{-}K(t-1) \ge c$ for all $t\in (0,1)$.

\begin{proposition}[{\bf Local Lipschitz continuity of $\mv$}]\label{prop:Lipschitz}
Suppose that the kernel functions $K_1,\dots, K_n$ are singular and let $J$ be an arbitrary $n$-field function.

If $\yy\in \oS$ with $0<y_1\le\ldots\le y_n<1$ (so that all $y_i\in (0,1)$, $i=1,\ldots,n$) and if $j\in \{0,\dots,n\}$ is such that $I_j(\yy)$ is non-singular, i.e., $m_j(\yy)\neq -\infty$, then $m_j$ is Lipschitz continuous in an appropriately small neighborhood of $\yy$.

In particular, $\mv$ is locally Lipschitz continuous on the regularity set $Y$.
\end{proposition}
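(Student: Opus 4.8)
The plan is to fix $\yy$ as in the statement with $m_j(\yy)\neq-\infty$, and use the localization machinery already set up. By Lemma~\ref{lem:preZ} there are $\delta>0$ and a closed interval $W=W_j(\yy)=[a,b]$ such that for every $\xx$ with $\Vert\xx-\yy\Vert\le\delta$ one has $W\subseteq I_j(\xx)$, $W$ has distance at least $\delta$ from $\{x_1,\dots,x_n\}$, and $m_j(\xx)=\sup_{t\in W}F(\xx,t)=\sup_{t\in W}\bigl(J(t)+f(\xx,t)\bigr)$. The key point is that on $W$ the field $J$ \emph{does not move with $\xx$}, so $m_j(\xx)-m_j(\xx')$ is controlled purely by how much $f(\cdot,t)$ changes uniformly in $t\in W$: indeed, taking suprema over $t\in W$ in the elementary inequality $F(\xx,t)\le F(\xx',t)+\bigl(f(\xx,t)-f(\xx',t)\bigr)$ gives
\begin{equation*}
\vert m_j(\xx)-m_j(\xx')\vert \le \sup_{t\in W}\vert f(\xx,t)-f(\xx',t)\vert .
\end{equation*}
So everything reduces to Lipschitz continuity, uniformly in $t\in W$, of the map $\xx\mapsto f(\xx,t)=\sum_{i=1}^n K_i(t-x_i)$.

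For that, recall that each $K_i$ is concave on $(-1,0)$ and on $(0,1)$, hence locally absolutely continuous there, with non-increasing one-sided derivatives; and, since $W$ has distance at least $\delta$ to every $x_i$ (and the $x_i$ stay in a fixed compact subinterval of $(0,1)$ for $\xx$ near $\yy$), the arguments $t-x_i$ range over a fixed compact subset of $(-1,0)\cup(0,1)$ \emph{bounded away from $0,\pm1$}. On such a compact set the derivatives $K_i'$ are bounded, say by a constant $M=M(\yy,\delta)$, by monotonicity of the one-sided derivatives of a concave function. Hence each $t\mapsto K_i(t-x_i)$ is $M$-Lipschitz in $x_i$ uniformly for $t\in W$, and therefore $\xx\mapsto f(\xx,t)$ is $nM$-Lipschitz (in the $\Vert\cdot\Vert_\infty$ norm) uniformly for $t\in W$. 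Combining with the display above yields $\vert m_j(\xx)-m_j(\xx')\vert\le nM\Vert\xx-\xx'\Vert$ for all $\xx,\xx'$ in the ball $B(\yy,\delta')$ with $\delta'$ small enough that this ball, together with $W$, stays within the regime guaranteed by Lemma~\ref{lem:preZ} and keeps $t-x_i$ bounded away from the singular points; this proves that $m_j$ is Lipschitz near $\yy$.

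For the ``in particular'' claim: if $\yy\in Y$ then by definition $\yy\in S$, so $0<y_1<\cdots<y_n<1$ and all $m_j(\yy)\neq-\infty$ for $j=0,\dots,n$; thus the hypotheses of the first part hold for \emph{every} $j\in\{0,\dots,n\}$ simultaneously. Taking the minimum of the finitely many radii and the maximum of the finitely many Lipschitz constants obtained for $j=0,\dots,n$, we get a single neighborhood of $\yy$ on which every $m_j$ is Lipschitz, hence on which $\mv=(m_0,\dots,m_n)$ is Lipschitz. Since $\yy\in Y$ was arbitrary and $Y$ is open, $\mv$ is locally Lipschitz on $Y$.

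The main obstacle, and the only genuinely delicate point, is the uniform-in-$t$ bound on the derivatives of the translated kernels: one must be careful that as $\xx$ varies near $\yy$ the points $t-x_i$ (for $t\in W$) neither approach the singularity at $0$ nor the endpoints $\pm1$ of the domain of concavity — this is exactly what the distance-$\ge\delta$ property of $W$ from $\{x_1,\dots,x_n\}$ in Lemma~\ref{lem:preZ}, together with the assumption $0<y_1,\ y_n<1$ (all nodes interior), buys us, and it is why the concavity of the $K_i$ — unused in Section~\ref{sec:Basics} — finally enters. No use of condition \eqref{cond:PM} is needed here; that will only be relevant later for the injectivity/local-homeomorphism part.
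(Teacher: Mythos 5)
Your proof is correct and follows essentially the same route as the paper's: localize the supremum to a compact window $W$ at distance $\ge\delta$ from the nodes, note the field $J$ cancels, and bound the variation of $f(\cdot,t)$ via the boundedness of the one-sided derivatives of the concave kernels on compact sets avoiding $0$ and $\pm1$. The only cosmetic differences are that you invoke Lemma~\ref{lem:preZ} directly rather than Lemma~\ref{lem:Zq}, and you use the inequality $\vert\sup F(\xx,\cdot)-\sup F(\xx',\cdot)\vert\le\sup\vert f(\xx,\cdot)-f(\xx',\cdot)\vert$ in place of the paper's near-maximizers $z^{(h)}$ with $h\to0$; both are equivalent.
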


\begin{proof}
	Fix $q>0$ and consider the set $Z: =Z_j(\yy,q)$ and the value $\eta>0$ provided by Lemma \ref{lem:Zq}, and let $\xx, \zz \in B(\yy,\eta)$ be two otherwise arbitrary node systems. As the mentioned lemma allows taking any smaller value of $\eta$, for our proof to work assume further that $\eta$ is chosen smaller than $\frac12 \min(y_1,1-y_n)$, the latter minimum being positive by assumption. This provides $y_1>2\eta$ and $1-y_n>2\eta$.

The functions $F(\xx,\cdot), F(\zz,\cdot)$ are finite valued on $Z$, and we have $m_j(\xx)=\sup_{t\in Z} F(\xx,t)$, $m_j(\zz)=\sup_{t\in Z} F(\zz,t)$.

For $h\in(0,\eta)$ take $z^{(h)}\in Z$ with $m_j(\zz) < F(\zz,z^{(h)})+h$. As we have $m_j(\xx)\ge F(\xx,z^{(h)})$ (for $Z \subseteq I_j(\xx)$ according to (ii) in Lemma \ref{lem:Zq}) we are led to
\begin{align*}
m_j(\zz) - m_j(\xx) &\le F(\zz,z^{(h)})+h-F(\xx,z^{(h)}) \\
&= \sum_{i=1}^n (K_i(z^{(h)}-z_i)-K_i(z^{(h)}-x_i))+h.
\end{align*}

Consider first the indices $i\in\{1,\dots,j\}$. As $x_i\le x_j<z^{(h)}-\eta$ we have $z^{(h)}-x_i\ge \eta$. Further, $z^{(h)}-x_i \le 1-x_1 \le 1-y_1+\eta \le 1-\eta$ and it follows that $z^{(h)}-x_i \in[\eta,1-\eta]$; similarly, we get $z^{(h)}-z_i \in [\eta,1-\eta]$.

On this interval the kernel function $K_i$ is concave, so that
\begin{equation*}
\left\vert K_i(z^{(h)}-z_i)-K_i(z^{(h)}-x_i)\right\vert \le \max\left(\vert D_+K_i(\eta)\vert, \vert D_-K_i (1-\eta)\vert\right) \vert z_i-x_i\vert.
\end{equation*}

For the other indices $i\in \{j+1,\dots, n\}$ we have $x_i \ge x_{j+1}>z^{(h)}+\eta$ and $z^{(h)}-x_i<-\eta$, and further $x_i \le x_n < y_n+\eta <(1-2\eta)+\eta=1-\eta$ so that $z^{(h)}-x_i\ge -x_i \ge -(1-\eta)$ whence $z^{(h)}-x_i \in [-(1-\eta),-\eta]$; and similarly, $z^{(h)}-z_i \in [-(1-\eta),-\eta]$. So for these indices we find, again by concavity of $K_i$ that
\begin{equation*}
\left\vert K_i(z^{(h)}-z_i)-K_i(z^{(h)}-x_i)\right\vert \le \max\left(\vert D_+K_i(-(1-\eta))\vert, \vert D_-K_i(-\eta)\vert\right) \vert z_i-x_i\vert.
\end{equation*}
Defining now $L:=\max \left(\vert D_+K_i(\eta)\vert, \vert D_-K_i(1-\eta)\vert, \vert D_+K_i(-(1-\eta))\vert, \vert D_-K_i(-\eta)\vert\right)$, we are led to
\begin{equation*}
\left\vert K_i(z^{(h)}-z_i)-K_i(z^{(h)}-x_i)\right\vert \le L \vert x_i-z_i\vert \quad \text{for each $i=1,\ldots,n$}.
\end{equation*}
As a result, we are led to
\begin{equation*}
m_j(\zz) - m_j(\xx) \le Ln \Vert \xx-\zz\Vert +h.
\end{equation*}
Letting $h\to 0$ we obtain the upper estimate $m_j(\zz) - m_j(\xx) \le Ln \Vert \xx-\zz\Vert $.

Changing the roles of the points $\xx, \zz$ we also obtain the converse inequality $ m_j(\xx) - m_j(\zz) \le Ln \Vert \xx-\zz\Vert $, whence also $\vert m_j(\xx)-m_j(\zz) \vert \le L n \Vert \xx-\zz\Vert $, as needed.
\end{proof}

\begin{lemma}\label{lem:Demyanov-formula}
Suppose that the kernel functions $K_1,\dots, K_n$ are singular and let $J$ be an arbitrary $n$-field function.

Let $\yy\in S$ and let $j\in \{0,\dots,n\}$ be such that $m_j(\yy)\neq -\infty$. Take any $q>0$, consider the set $Z_j(\yy,q)\subseteq I_j(\yy)$ provided by Lemma \ref{lem:Zq}, and let $\zl:=\inf Z_j(\yy,q)$, $\zu:=\sup Z_j(\yy,q)$.
For the one-sided lower and upper partial Dini derivatives $\underline{\partial_{i,+}} m_j(\yy)$ and $\overline{\partial_{i,+}} m_j(\yy)$ and for every $i=1,\dots,n$, we have
\begin{align*}
-D_{-}K_i(\zl-y_i)& \le \underline{\partial_{i,+}} m_j(\yy) \le \overline{\partial_{i,+}} m_j(\yy) \le -D_{-}K_i(\zu-y_i).
\intertext{Similarly, for the one-sided lower and upper partial Dini derivatives $\underline{\partial_{i,-}} m_j(\yy)$ and $\overline{\partial_{i,-}} m_j(\yy)$ and for every $i=1,\dots,n$ it holds}
-D_{+}K_i(\zl-y_i) &\le \underline{\partial_{i,-}} m_j(\yy) \le \overline{\partial_{i,-}} m_j(\yy) \le -D_{+}K_i(\zu-y_i).
\end{align*}
\end{lemma}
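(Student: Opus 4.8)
The plan is to estimate the one-sided difference quotient of $m_j$ in the $i$-th coordinate direction by exploiting that $m_j(\xx)$ is a supremum of $F(\xx,\cdot)$ over the fixed compact-ish set $Z:=Z_j(\yy,q)$ for all $\xx$ near $\yy$ (part (ii) of the Z-q lemma), and that on the relevant argument range the kernel $K_i$ is concave, so its increments are controlled above and below by its one-sided derivatives at the \emph{endpoints} of that range. I will write $\ee_i$ for the $i$-th standard unit vector and look at $m_j(\yy+h\ee_i)-m_j(\yy)$ for small $h>0$ (the $h<0$ case being symmetric and giving the $D_+$ version). Fix $q>0$ and take $\eta>0$ and $Z=Z_j(\yy,q)$ from Lemma \ref{lem:Zq}, shrinking $\eta$ so that $\yy+h\ee_i\in B(\yy,\eta)$ whenever $|h|<\eta$; then for all such $h$ we have $Z\subseteq \rint I_j(\yy+h\ee_i)$, $Z$ stays at distance $\ge\eta$ from the node set, and $m_j(\yy+h\ee_i)=\sup_{t\in Z}F(\yy+h\ee_i,t)$, $m_j(\yy)=\sup_{t\in Z}F(\yy,t)$. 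Since only the $i$-th node moves, $F(\yy+h\ee_i,t)-F(\yy,t)=K_i(t-y_i-h)-K_i(t-y_i)$ for every $t\in Z$.

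Next I would derive the \textbf{upper bound} for $\overline{\partial_{i,+}}m_j(\yy)$. Pick $t^{(h)}\in Z$ nearly attaining the sup for $\yy+h\ee_i$, say $m_j(\yy+h\ee_i)<F(\yy+h\ee_i,t^{(h)})+h^2$; since $m_j(\yy)\ge F(\yy,t^{(h)})$, we get
\[
m_j(\yy+h\ee_i)-m_j(\yy)\le K_i(t^{(h)}-y_i-h)-K_i(t^{(h)}-y_i)+h^2.
\]
All the points $t^{(h)}-y_i$ lie in the interval $[\zl-y_i,\zu-y_i]$, which is a compact subinterval of one of $(-1,0)$ or $(0,1)$ (here the singularity of $K_i$ and the distance-$\eta$ property guarantee it does not contain $0$); on that interval $K_i$ is concave, hence $K_i(s-h)-K_i(s)\le -h\,D_-K_i(s-h)$ for $h>0$, and by monotonicity of $D_-K_i$ (non-increasing) together with $s-h\le s\le \zu-y_i$ we bound $-D_-K_i(s-h)\le -D_-K_i(\zu-y_i)$... wait, the direction must be chosen so the endpoint $\zu-y_i$ gives the \emph{largest} value; I will use the correct monotone comparison so that $K_i(t^{(h)}-y_i-h)-K_i(t^{(h)}-y_i)\le -h\,D_-K_i(\zu-y_i)+o(h)$, using upper semicontinuity of $D_-K_i$ to absorb the $o(h)$ as $h\downto0$. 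Dividing by $h$ and letting $h\downto0$ yields $\overline{\partial_{i,+}}m_j(\yy)\le -D_-K_i(\zu-y_i)$.

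For the \textbf{lower bound} on $\underline{\partial_{i,+}}m_j(\yy)$, I instead test with a fixed near-maximizer $t^{*}\in Z$ for $\yy$ itself, $m_j(\yy)<F(\yy,t^{*})+h^2$ with $t^{*}$ also depending mildly on $h$ only through the error; since $m_j(\yy+h\ee_i)\ge F(\yy+h\ee_i,t^{*})$,
\[
m_j(\yy+h\ee_i)-m_j(\yy)\ge K_i(t^{*}-y_i-h)-K_i(t^{*}-y_i)-h^2\ge -h\,D_-K_i(t^{*}-y_i-h)-h^2,
\]
and now I want the \emph{smallest} such value, i.e. I compare against the left endpoint $\zl-y_i$; choosing $t^{*}$ near $\inf Z$ (a near-maximizer can be taken there by definition of $Z_j(\yy,q)$ as the set where $F(\yy,\cdot)\ge m_j(\yy)-3q/4$, but more robustly one takes $t^{*}$ arbitrary in $Z$ and uses $-D_-K_i(t^{*}-y_i-h)\ge -D_-K_i(\zl-y_i-h)$ by monotonicity, then lower semicontinuity of... actually $D_-K_i$ is u.s.c., so I pass to $\limsup$ carefully), dividing by $h$ and sending $h\downto0$ gives $\underline{\partial_{i,+}}m_j(\yy)\ge -D_-K_i(\zl-y_i)$. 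The middle inequality $\underline{\partial_{i,+}}\le\overline{\partial_{i,+}}$ is definitional. The second displayed chain, with $D_+$ in place of $D_-$, follows verbatim by replacing $h>0$ with $h<0$: for $h<0$ concavity gives $K_i(s-h)-K_i(s)\le -h\,D_+K_i(s)$ type bounds, and tracking which endpoint is extremal flips $D_-$ to $D_+$. \textbf{The main obstacle} I anticipate is bookkeeping the direction of all these monotonicity comparisons — matching "$h>0$ vs $h<0$", "increasing vs decreasing side of the kernel", and "which endpoint $\zl-y_i$ or $\zu-y_i$ is extremal" — so that the inequalities land with the stated orientation; the use of semicontinuity of $D_\pm K_i$ to control the $h\to0$ limit of $D_-K_i(\text{argument}-h)$ is the one genuinely delicate analytic point, everything else being the concavity inequality $K(s+r)-K(s)\le r\,D_-K(s)$ for $r<0$ (and its mirror) plus the stability of $Z$ from Lemma \ref{lem:Zq}.
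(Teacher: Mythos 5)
Your plan is essentially the paper's own proof: localize the supremum on the stable set $Z_j(\yy,q)$ via the Z-q lemma, use a near-maximizer for the perturbed node system to get the upper bound and a near-maximizer for the unperturbed system to get the lower bound, then apply the concavity support-line inequalities, monotonicity of $D_-K_i$ toward the endpoints $\zl,\zu$, and semicontinuity of $D_-K_i$ to pass to the limit. The one slip is in your upper bound: the correct concavity inequality is $K_i(s-h)-K_i(s)\le -h\,D_-K_i(s)$ (left derivative at $s$, not at $s-h$), after which monotonicity gives $-D_-K_i(s)\le -D_-K_i(\zu-y_i)$ directly, so no semicontinuity or $o(h)$ term is needed there --- upper semicontinuity of $D_-K_i$ is genuinely needed only in the lower bound, exactly at the point you flag as delicate.
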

\begin{proof}
	Let $j\in \{0,\dots,n\}$, $i\in \{1,\dots,n\}$ be fixed and
consider the set $Z=Z_j(\yy,q)$ and a corresponding value $\eta>0$ provided by the above Lemma \ref{lem:Zq}. We can assume further that $\eta$ is chosen smaller than $\frac12 \min(y_1,1-y_n)$, the latter minimum being positive for $\yy\in S$. This provides $y_1>2\eta$ and $1-y_n>2\eta$.

For $h\in (0,\eta)$ the functions $F(\yy,\cdot), F(\yy+h\ee_i,\cdot)$ are finite valued on $Z$, and we have $m_j(\yy)=\sup_{t\in Z} F(\yy,t)$, $m_j(\yy+h\ee_i)=\sup_{t\in Z} F(\yy+h\ee_i,t)$. For $h\in (0,\eta)$ take $z^{(h)}\in Z$ with $F(\yy+h\ee_i,z^{(h)})+h^2>m_j(\yy+h\ee_i)$. As we have $m_j(\yy)\ge F(\yy,z^{(h)})$ we are led to
\begin{align*}
m_j(\yy+h\ee_i) - m_j(\yy) &\le F(\yy+h\ee_i,z^{(h)})-F(\yy,z^{(h)}) +h^2\\
&= K_i(z^{(h)}-(y_i+h))-K_i(z^{(h)}-y_i) +h^2.
\end{align*}
Here, for $i\le j$ we have $0<z^{(h)}-(y_i+h) <z^{(h)}-y_i <1$ and for $j<i$ we have $-1<z^{(h)}-(y_i+h)<z^{(h)}-y_i<0$, whence by concavity we obtain that
\begin{align*}
m_j(\yy+h\ee_i)-m_j(\yy)
\le - D_{-}K_i(z^{(h)}-y_i) h+h^2.
\end{align*}
Since $\zu-y_i$ and $z^{(h)}-y_i$ belong to the same concavity interval of $K_i$, we conclude by the choice of $\zu$ and by concavity that
\begin{equation*}
m_j(\yy+h\ee_i)-m_j(\yy) \le - D_{-}K_i(\zu-y_i) h+h^2,
\end{equation*}
so that
\begin{equation*}
\limsup_{h \downto 0} \frac{m_j(\yy+h\ee_i)-m_j(\yy)}{h} \le -D_-K_i(\zu-y_i).
\end{equation*}
We turn to the lower estimation. Take $h\in (0,\eta)$ and $z^{(h)}\in Z$ such that $F(\yy,z^{(h)})+h^2>m_j(\yy)$ and notice that $m_j(\yy+h\ee_i) \ge F( \yy+h\ee_i,z^{(h)})$ for $h\in (0,\eta)$, therefore, similarly as above, we are led to
\begin{align*}
m_j(\yy+h\ee_i)-m_j(\yy) &\ge F(\yy+h\ee_i,z^{(h)}) -F(\yy,z^{(h)})-h^2\\
& = K_i(z^{(h)}-(y_i+h))-K_i(z^{(h)}-y_i)-h^2\\
&\geq -D_-K_i(z^{(h)}-(y_i+h))h-h^2\\
&\geq -D_-K_i(\zl-(y_i+h))h-h^2.
\end{align*}
This and the lower semicontinuity of $-D_-K_i$ yield that
\begin{align*}
\liminf_{h \downto 0} \frac{m_j(\yy+h\ee_i)-m_j(\yy)}{h} & \ge-D_-K_i(\zl-y_i).
\end{align*}
We thus established the first chain of inequalities in the assertion. The second one can be proved by similar reasonings.

With the previous choice of $Z$ and $\eta>0$ we have that for every  $h\in (0,\eta)$ the functions $F(\yy,\cdot), F(\yy-h\ee_i,\cdot)$ are finite valued on $Z$, and $m_j(\yy)=\sup_{t\in Z} F(\yy,t)$, $m_j(\yy-h\ee_i)=\sup_{t\in Z} F(\yy-h\ee_i,t)$. For $h\in (0,\eta)$ take $z^{(h)}\in Z$ with $F(\yy-h\ee_i,z^{(h)})+h^2>m_j(\yy-h\ee_i)$. As we have $m_j(\yy)\ge F(\yy,z^{(h)})$ we are led to
\begin{align*}
m_j(\yy-h\ee_i) - m_j(\yy) &\le F(\yy-h\ee_i,z^{(h)})-F(\yy,z^{(h)}) +h^2\\
&= K_i(z^{(h)}-(y_i-h))-K_i(z^{(h)}-y_i) +h^2.
\end{align*}
Here, for $i\le j$ we have $0<z^{(h)}-y_i<z^{(h)}-(y_i-h)<1$ and for $j<i$ we have $-1<z^{(h)}-y_i<z^{(h)}-(y_i-h)<0$, whence by concavity we obtain that
\begin{align*}
m_j(\yy-h\ee_i)-m_j(\yy)
\le D_{+}K_i(z^{(h)}-y_i) h+h^2.
\end{align*}
Since $\zl-y_i$ and $z^{(h)}-y_i$ belong to the same concavity interval of $K_i$, we conclude by the choice of $\zl$ and by concavity that
\begin{equation*}
m_j(\yy-h\ee_i)-m_j(\yy) \le D_{+}K_i(\zl-y_i) h+h^2,
\end{equation*}
so that
\begin{equation*}
\liminf_{h \downto 0} \frac{m_j(\yy-h\ee_i)-m_j(\yy)}{-h} \ge -D_+K_i(\zl-y_i).
\end{equation*}
We turn to the upper estimate. Take $h\in (0,\eta)$ and $z^{(h)}\in Z$ such that $F(\yy,z^{(h)})+h^2>m_j(\yy)$ and notice that $m_j(\yy-h\ee_i) \ge F( \yy-h\ee_i,z^{(h)})$ for $h\in (0,\eta)$, therefore similarly as above we are led to
\begin{align*}
m_j(\yy-h\ee_i)-m_j(\yy) &\ge F(\yy-h\ee_i,z^{(h)}) -F(\yy,z^{(h)})-h^2\\
& = K_i(z^{(h)}-(y_i-h))-K_i(z^{(h)}-y_i)-h^2\\
&\geq D_+K_i(z^{(h)}-(y_i-h))h-h^2\\
&\geq D_+K_i(\zu-(y_i-h))h-h^2.
\end{align*}
Whence we conclude
\begin{align*}
\frac{m_j(\yy-h\ee_i)-m_j(\yy)}{-h}\le -D_+K_i(\zu-(y_i-h))+h,
\end{align*}
 and the upper semicontinuity of $-D_+K_i$ then yields that
\begin{align*}
\limsup_{h \downto 0} \frac{m_j(\yy+h\ee_i)-m_j(\yy)}{-h} & \le -D_+K_i(\zu-y_i).
\end{align*}
\end{proof}

Recall the definition of the set $Z_j(\yy)$ from Remark \ref{rem:usc}(b).

\begin{corollary}\label{cor:derivative}
Suppose that the kernel functions $K_1,\dots, K_n$ are singular and let $J$ be an arbitrary $n$-field function. Let $\yy\in S$ and $j\in \{0,\dots,n\}$ be such that $m_j(\yy)\neq -\infty$. Consider the set $Z_j(\yy)$ from Remark \ref{rem:usc}(b) and let $\zl:=\min Z_j(\yy)$, $\zu:=\max Z_j(\yy)$.

Then for the one-sided lower and upper partial Dini derivatives, and for any $i=1,\dots,n$, we have
\begin{align*}
& -D_{-}K_j(\zl-y_i) \le \underline{\partial_{i,+}} m_j(\yy) \le \overline{\partial_{i,+}} m_j(\yy) \le-D_{+}K_j(\zu-y_i),
\intertext{and}
& -D_{-}K_i(\zl-y_i) \le \underline{\partial_{i,-}} m_j(\yy) \le \overline{\partial_{i,-}} m_j(\yy) \le -D_{+}K_i(\zu-y_i).
\end{align*}
Further, if $Z_j(\yy)=\{z_j\}$, then we have
\begin{equation*}
 -D_{-}K_j(z_j-y_i)\leq \underline{\partial_{i,+}} m_j(\yy) \le \overline{\partial_{i,+}} m_j(\yy) \leq -D_{+}K_j(z_j-y_i)
\end{equation*}
and similarly
\begin{equation*}
 -D_{-}K_j(z_j-y_i) \leq  \underline{\partial_{i,-}} m_j(\yy) \le \overline{\partial_{i,-}} m_j(\yy) \le -D_{+}K_j(z_j-y_i).
\end{equation*}
In particular, if $Z_j(\yy)=\{z_j\}$ and $K_j$ is differentiable, then $m_j$ is differentiable at $\yy$ and $\partial_{i} m_j(\yy) = -K'_j(z_j-y_i)$.
\end{corollary}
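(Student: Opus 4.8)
The plan is to obtain Corollary~\ref{cor:derivative} from Lemma~\ref{lem:Demyanov-formula} by letting the auxiliary parameter $q\downto 0$. Fix $j\in\{0,\dots,n\}$ with $m_j(\yy)\neq-\infty$ and $i\in\{1,\dots,n\}$, and for $q>0$ set $a_q:=\inf Z_j(\yy,q)$, $b_q:=\sup Z_j(\yy,q)$, so that Lemma~\ref{lem:Demyanov-formula} reads
\[
-D_{-}K_i(a_q-y_i)\le\underline{\partial_{i,+}}m_j(\yy)\le\overline{\partial_{i,+}}m_j(\yy)\le-D_{-}K_i(b_q-y_i)
\]
together with the analogous chain with $\partial_{i,-}$ and $D_{+}K_i$ in place of $\partial_{i,+}$ and $D_{-}K_i$. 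Since $Z_j(\yy,q)\subseteq I_j(\yy)$ avoids the nodes $y_1,\dots,y_n$, all the arguments $a_q-y_i$, $b_q-y_i$ (as well as $\zl-y_i$, $\zu-y_i$) lie in the \emph{same} open concavity interval of $K_i$, namely $(0,1)$ when $i\le j$ and $(-1,0)$ when $i>j$; on that interval $D_{-}K_i$ is non-increasing and left-continuous with $\lim_{s\downto u}D_{-}K_i(s)=D_{+}K_i(u)$, while $D_{+}K_i$ is non-increasing and right-continuous with $\lim_{s\upto u}D_{+}K_i(s)=D_{-}K_i(u)$ — elementary facts for concave functions, consistent with the semicontinuity statements recorded before Proposition~\ref{prop:Lipschitz}.

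The first substantive step is to identify $\lim_{q\downto 0}a_q$ and $\lim_{q\downto 0}b_q$. By Remark~\ref{rem:usc}(b) the sets $\overline{Z_j(\yy,q)}$ are non-empty, compact, decrease as $q\downto 0$, and $\bigcap_{q>0}\overline{Z_j(\yy,q)}=Z_j(\yy)$. Hence $a_q=\min\overline{Z_j(\yy,q)}$ increases to a limit $a$; a routine nested-compactness argument shows $a\in\overline{Z_j(\yy,q)}$ for every $q$, so $a\in Z_j(\yy)$ and $a\ge\min Z_j(\yy)=\zl$, while $\zl\ge a_q$ for all $q$ forces $a\le\zl$, giving $a_q\upto\zl$; symmetrically $b_q\downto\zu$. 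Passing to the limit $q\downto 0$ in the two chains and using the one-sided continuity properties above, $-D_{-}K_i(a_q-y_i)\to-D_{-}K_i(\zl-y_i)$ and $-D_{-}K_i(b_q-y_i)\to-D_{+}K_i(\zu-y_i)$ yield the first asserted inequality, and $-D_{+}K_i(a_q-y_i)\to-D_{-}K_i(\zl-y_i)$, $-D_{+}K_i(b_q-y_i)\to-D_{+}K_i(\zu-y_i)$ yield the second. If $Z_j(\yy)=\{z_j\}$ then $\zl=\zu=z_j$ and the four estimates collapse to the stated two-sided bounds around $z_j$; if in addition $K_i$ is differentiable at $z_j-y_i$ (equivalently, being concave, of class $\Ce^1$ near that point), then $D_{-}K_i(z_j-y_i)=D_{+}K_i(z_j-y_i)=K_i'(z_j-y_i)$, so all four one-sided Dini derivatives equal $-K_i'(z_j-y_i)$ and the partial derivative $\partial_i m_j(\yy)=-K_i'(z_j-y_i)$ exists.

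The main obstacle is the final clause: upgrading ``all partial derivatives exist'' to ``$m_j$ is (Fréchet) differentiable at $\yy$'', since this implication fails in general even for locally Lipschitz functions. I would prove it directly, repeating the scheme of the proof of Lemma~\ref{lem:Demyanov-formula} but with a general increment $\vv\in\RR^n$ in place of $h\ee_i$, the crucial point being to \emph{shrink the window first}. Given $\ve>0$, choose $\rho>0$ so that $|K_i'(s)-K_i'(z_j-y_i)|<\ve/(2n)$ whenever $|s-(z_j-y_i)|<\rho$, for all $i$ (possible since a differentiable concave function has continuous derivative); then, using that the compacta $\overline{Z_j(\yy,q)}$ are nested and shrink to $\{z_j\}$, fix $q$ with $\overline{Z_j(\yy,q)}\subseteq(z_j-\rho/2,z_j+\rho/2)$, together with the corresponding $\eta>0$ from Lemma~\ref{lem:Zq}. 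For $\|\vv\|<\min(\eta,\rho/2,\ve/2)$ one estimates $m_j(\yy+\vv)-m_j(\yy)$ from above and below by evaluating $F$ (at the shifted node systems) at near-maximizers in $Z_j(\yy,q)$ with slack $\|\vv\|^2$, as in Lemma~\ref{lem:Demyanov-formula}, and applies the mean value theorem to each difference $K_i(z-y_i-v_i)-K_i(z-y_i)$ — the intermediate point staying within $\rho$ of $z_j-y_i$ because the near-maximizer $z$ lies in $\overline{Z_j(\yy,q)}$. This produces $\bigl|m_j(\yy+\vv)-m_j(\yy)+\sum_{i=1}^{n}K_i'(z_j-y_i)v_i\bigr|\le\ve\|\vv\|$, hence differentiability of $m_j$ at $\yy$ with gradient $(-K_1'(z_j-y_1),\dots,-K_n'(z_j-y_n))$. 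The only genuinely delicate issue is the interchange of limits: for a fixed $q$ the near-maximizer need not approach $z_j$ as $\vv\to0$, so the window $Z_j(\yy,q)$ must be pinned close to $z_j$ \emph{before} letting $\vv\to0$; everything else ($q\downto0$, the one-sided-derivative bookkeeping, the mean value estimate) is routine.
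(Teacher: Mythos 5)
Your argument for the main inequalities is essentially the paper's own: pass to the limit $q\downto 0$ in Lemma~\ref{lem:Demyanov-formula}, using that $\inf Z_j(\yy,q)\upto\zl$ and $\sup Z_j(\yy,q)\downto\zu$ together with the monotonicity and one-sided (semi)continuity of $D_{-}K_i$ and $D_{+}K_i$; the paper asserts the monotone convergence of these endpoints without comment, and your nested-compactness justification of it is correct (the paper also routes the upper bound through $-D_{-}K_i\le -D_{+}K_i$ first and then invokes upper semicontinuity of $-D_{+}K_i$, which quietly covers the case where $\sup Z_j(\yy,q)$ is eventually equal to $\zu$, a case your ``the limit equals $-D_{+}K_i(\zu-y_i)$'' phrasing slightly misstates, though the required inequality still follows). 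The one genuine addition is your treatment of the final clause: the paper's proof stops after the limiting argument and implicitly reads differentiability of $m_j$ off the coincidence of the four partial Dini derivatives, whereas you rightly note that existence of all partial derivatives does not by itself give total differentiability of a Lipschitz function, and your window-shrinking argument (pin $\overline{Z_j(\yy,q)}$ inside a small neighbourhood of $z_j$ \emph{before} letting the increment $\vv\to 0$, then apply the mean value theorem uniformly over the near-maximizers) is a sound way to establish the stronger Fr\'echet statement.
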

Note that in the last cases, when $Z_j(\yy)=\{z_j\}$, we even have $F(\yy,z_j)=m_j(\yy)$ \emph{whenever $J$, and hence also $F$, is upper semicontinuous}. However, without this assumption nothing prevents a smaller function value at $z_j$, e.g. even $-\infty$ if $J(z_j)=-\infty$ occurs. Still, the statements about the derivative of $m_j$ are found to remain valid.

\begin{proof} For arbitrary $q>0$ and the sets $Z_j(\yy,q)$ from Lemma \ref{lem:Zq} denote $z^{(q)}_{*j}:=\inf Z_j(\yy,q)$ and $z^{(q)*}_{j}:=\sup Z_j(\yy,q)$, so that in particular $z^{(q)}_{*j} \uparrow z_{*j}$ and $z^{(q)*}_{j}\downarrow z_j^*$.
By Lemma \ref{lem:Demyanov-formula} we have
\begin{align*}
	-D_{-}K_i(\zl^{(q)}-y_i) \le \underline{\partial_{i,+}} m_j(\yy) \le \overline{\partial_{i,+}} m_j(\yy) \le -D_{-}K_i(z^{(q)*}_{j}-y_i),
\end{align*}
which entails
\begin{align*}
	-D_{-}K_i(\zl^{(q)}-y_i) \le \underline{\partial_{i,+}} m_j(\yy) \le \overline{\partial_{i,+}} m_j(\yy) \le -D_{+}K_i(z^{(q)*}_{j}-y_i).
\end{align*}
Then lower semicontinuity of $-D_{-}K_i$ and upper semicontinuity of $-D_{+}K_i$ yield the first stated inequality, while the second one can be proved analogously.
\end{proof}

\begin{remark}\label{rem:demyanov} We remark that this in particular contains the result \cite[Prop.{} 9.1]{TLMS} on the derivative of interval maxima, see also \cite[Thm.{} I.3.4, page 34]{DemRubBook}. Note however that none of these cited results can be applied here for the proof, as in general differentiability and weaker forms of that are not assumed \emph{a priori}.

The uniqueness condition, $Z_j(\yy)=\{z_j\}$, is satisfied in many important cases, e.g.{} when $J$ itself is concave, and at least some of $J$, $K_1,\ldots,K_n$ is strictly concave, in which case also $F(\yy,\cdot)$ is strictly concave on $I_j(\yy)$ and the maximum point is unique, while all the $Z_j(\yy,q)$ are closed (this being valid for any upper semicontinuous field function $J$, see Remark \ref{rem:usc}) and so $Z_j(\yy)=\{z_j\}$ with the unique point of maxima on $I_j(\yy)$ of the strictly concave function $F(\yy,\cdot)$.

As our goal here is to deal with the wider generality set forth above, we do not pursue the more precise descriptions of special cases, but it may be worthwhile to note that the general result (Lemma \ref{lem:Demyanov-formula}, Corollary \ref{cor:derivative}) is strong enough to cover very precise estimates for a good number of important and still quite general cases.
\end{remark}

\section{The local homeomorphism property of $\Phi\vert_Y$}\label{sec:Jacobi}
The next step is the crucial one in our progress to prove Theorem \ref{thm:homeo3}.

\begin{proposition}\label{prop:homeo2}
Suppose that the singular kernel functions $K_1,\dots, K_n$ satisfy \eqref{cond:PM} with some $c>0$ and let $J$ be an arbitrary $n$-field function.
For each $\yy\in Y$ there are open neighborhoods $U$ of $\yy$ and $V$ of $\diff(\yy)$ such that the difference function $\diff:U\to V$, defined in \eqref{eq:diffdef}, is a bi-Lipschitz homeomorphism.
\end{proposition}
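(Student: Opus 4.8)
The plan is to invoke Clarke's inverse function theorem for locally Lipschitz maps, so the whole task reduces to two things: (1) verifying that $\diff$ is locally Lipschitz near $\yy$ (already essentially done), and (2) showing that Clarke's generalized Jacobian $\partial \diff(\yy)$ consists entirely of invertible matrices, with a uniform bound on the inverses in a neighborhood. Once these are in place, Clarke's theorem yields open neighborhoods $U\ni\yy$, $V\ni\diff(\yy)$ such that $\diff|_U\colon U\to V$ is a Lipschitz homeomorphism whose inverse is also Lipschitz, which is exactly the assertion.

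First I would recall from Proposition \ref{prop:Lipschitz} that each $m_j$, hence $\mv$ and $\diff=\Phi$, is Lipschitz on a suitable ball $B(\yy,\eta)\subseteq Y$; fix such a ball. The generalized Jacobian $\partial\Phi(\xx)$ at a point $\xx\in B(\yy,\eta)$ is the convex hull of limits of ordinary Jacobians $\Phi'(\xx_k)$ at differentiability points $\xx_k\to\xx$. At such differentiability points, Corollary \ref{cor:derivative} gives $\partial_i m_j(\xx) = -K_j'(z_j(\xx)-x_i)$ where $z_j(\xx)$ is the (then unique, or at least the relevant) maximum point on $I_j(\xx)$; more robustly, every matrix in $\partial \mv(\xx)$ has rows of the form $\bigl(-\kappa_{j,1},\dots,-\kappa_{j,n}\bigr)$ with $\kappa_{j,i}$ a value of $D_\pm K_i$ (or a convex combination of such) evaluated at a point $\zeta_j - x_i$ for some $\zeta_j$ in the closed "contact set" $Z_j$, where $\zeta_0\le \zeta_1\le\cdots\le\zeta_n$ with $\zeta_j\in I_j(\xx)$. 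Passing from $\mv$ to $\Phi$ just subtracts consecutive rows, so a matrix $M\in\partial\Phi(\xx)$ has entries $M_{j,i} = -\kappa_{j,i} + \kappa_{j-1,i}$ for $j=1,\dots,n$.

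The main obstacle — and the heart of the proof — is to show that every such $M$ is invertible, with $\|M^{-1}\|$ bounded uniformly for $\xx\in B(\yy,\eta)$ (shrinking $\eta$ if necessary). Here is where condition \eqref{cond:PM} with $c>0$ enters. The key structural facts are: (a) because $x_1,\dots,x_{j}$ lie to the left of $I_j$ and $x_{j+1},\dots,x_n$ to the right, and each $K_i$ is concave and decreasing-then-increasing across its singularity, the quantity $\kappa_{j,i}=-D_\pm K_i(\zeta_j-x_i)$ has a definite sign pattern — the contributions from $i\le j$ push one way and from $i>j$ the other; (b) as $j$ increases by one, $\zeta_j$ moves right and one more index switches from the "left" group to the "right" group, and \eqref{cond:PM} controls exactly the jump $D_-K_i(t)-D_-K_i(t-1)\ge c$ that governs this switch. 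Concretely I expect $M$ to be (after an obvious sign normalization) strictly diagonally dominant, or more precisely that $M$ can be written as a sum of a matrix with a sign structure making it a nonsingular $M$-matrix / totally positive-type matrix plus a correction, with the diagonal excess bounded below by a constant multiple of $c$. I would make this quantitative by testing $M$ against an arbitrary vector $v$: estimate $\langle Mv, \text{(suitable sign vector)}\rangle$ or examine $\|Mv\|_\infty$ directly, using monotonicity of $D_-K_i$ along each concavity interval to bound the off-diagonal terms and \eqref{cond:PM} to bound the relevant diagonal differences from below by $c$, uniformly once $\xx$ ranges over the compact ball $\overline{B}(\yy,\eta)$ on which all the relevant arguments $\zeta_j - x_i$ stay in compact subsets of $(-1,0)\cup(0,1)$ bounded away from $0$ and $\pm 1$ (guaranteed by $\yy\in Y\subseteq S$ after shrinking $\eta$). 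This yields $\|Mv\|\ge c'\|v\|$ for all $M\in\partial\Phi(\xx)$ and all $\xx\in B(\yy,\eta)$, hence every $M$ is invertible with $\|M^{-1}\|\le 1/c'$.

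With uniform invertibility of the generalized Jacobian in hand, Clarke's inverse function theorem (\cite{ClarkeInv}, \cite{ClarkeGenGrad}) applies directly: there exist open $U\ni\yy$ and $V\ni\diff(\yy)$ with $\diff|_U$ a bijection onto $V$, $\diff|_U$ Lipschitz (from Proposition \ref{prop:Lipschitz}), and $(\diff|_U)^{-1}$ Lipschitz (with constant controlled by $1/c'$). That is precisely a bi-Lipschitz homeomorphism, completing the proof. The one point requiring care in the write-up is making the "sign pattern + diagonal dominance" argument clean enough to cover the convex-hull matrices and the one-sided-derivative ambiguities simultaneously; I would handle this by noting that all the estimates above are preserved under convex combinations and under replacing $D_-K_i$ by $D_+K_i$ (both are monotone and differ only on a countable set), so it suffices to prove the bound for the "extreme" matrices built from one-sided derivatives at the endpoints $z_{*j}, z_j^*$ of the contact sets.
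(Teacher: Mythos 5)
Your proposal is correct and follows essentially the same route as the paper: Clarke's inverse function theorem applied to the locally Lipschitz map $\diff$, with invertibility of the generalized Jacobian obtained from the sign pattern of the off-diagonal entries plus the telescoping column sums bounded below by $c$ via \eqref{cond:PM} (this is exactly the paper's diagonal $c$-dominance in Lemma \ref{lem:cdiagdom}), and the observation that this estimate is closed and convex (Lemma \ref{lem:cdomconv}) so that it survives the convex-hull and one-sided-derivative ambiguities. No substantive difference.
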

To establish this assertion we resort to a generalization of the inverse function theorem to Lipschitz functions by Clarke \cite{ClarkeInv}, and begin with recalling the relevant notion of ``generalized derivative'' of such functions, see \cite{ClarkeGenGrad}. Recall that by Rademacher's theorem a locally Lipschitz function on an open subset of $\RR^n$ is almost everywhere differentiable, see, e.g., Section 3.1.2 \cite{EGbook}.
\begin{definition}[{\bf Clarke derivative}]\label{def:Clarke}
Let $U$ be an open subset of $\RR^n$ and let $f:U\to \RR^n$ be a locally Lipschitz function. We denote by $f'$ the almost everywhere existing derivative of $f$. For $\xx_0\in U$ consider the \emph{generalized derivative}, or \emph{Clarke derivative} of $f$ at $\xx_0\in U$ as the non-empty, closed, convex set
\begin{align*}
\Jf(\xx_0):=\mathrm{conv}\Bigl\{A\in \RR^{n\times n}: ~ & \text{$A$ is limit point of} ~ (f'(\xx_k))_{k\in \NN} \\ & \text{for some sequence $(\xx_k)$ with} ~ \xx_k\to \xx_0\\
& \text{and} ~ f'(\xx_k) ~ \text{existing for each $k\in\NN$} \Bigr\}.
\end{align*}

Moreover, we say that $\Jf(\xx_0)$ has \emph{full rank}, if each $A\in \Jf(\xx_0)$ has rank $n$.
\end{definition}
Here is the appropriate generalization of the inverse function theorem to this setting.
\begin{theorem}[Clarke \cite{ClarkeInv}]\label{thm:clarke}
Let $U$ be an open subset of $\RR^n$ and let $f:U\to \RR^n$ be a locally Lipschitz function. Further, let $\xx_0\in U$be a point such that $\Jf(\xx_0)$ has full rank. Then $f$ is a bi-Lipschitz homeomorphism in some neighbourhood of $\xx_0$.
\end{theorem}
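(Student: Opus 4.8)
The plan is to derive the theorem from three classical inputs: a uniform lower bound on the matrices of the Clarke generalized Jacobian near $\xx_0$, the Lebourg mean value inequality for Lipschitz maps, and Brouwer's invariance of domain. We may fix a ball $B(\xx_0,\rho)$ on which $f$ is $L$-Lipschitz, so that $\|f'\|\le L$ wherever $f'$ exists.

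\emph{Step 1: a convex neighbourhood of $\Jf(\xx_0)$ carrying a uniform lower bound.} The set $\Jf(\xx_0)$ is compact --- closed and convex by definition, and bounded because the operator-norm bound $L$ on $f'$ passes to limit points and to convex combinations. By the full-rank hypothesis each $A\in\Jf(\xx_0)$ is invertible, so $(A,v)\mapsto|Av|$ attains a strictly positive minimum $\alpha$ on the compact set $\Jf(\xx_0)\times\{|v|=1\}$, i.e.\ $|Av|\ge\alpha|v|$ for every $A\in\Jf(\xx_0)$ and $v\in\RR^n$. Putting $\ve:=\alpha/2$, the triangle inequality gives $|(A+E)v|\ge\beta|v|$ with $\beta:=\alpha/2$ for all $A\in\Jf(\xx_0)$, $v\in\RR^n$ and all matrices $E$ with $\|E\|\le\ve$; hence the set $\mathcal C:=\Jf(\xx_0)+\overline B(0,\ve)\subseteq\RR^{n\times n}$ --- which is \emph{convex precisely because $\Jf(\xx_0)$ is convex} --- satisfies $|Mv|\ge\beta|v|$ for every $M\in\mathcal C$. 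Now invoke the upper semicontinuity of the set-valued map $\zz\mapsto\Jf(\zz)$ (a standard property of the generalized Jacobian, \cite{ClarkeGenGrad}) to choose $r\in(0,\rho)$ with $\Jf(\zz)\subseteq\mathcal C$ for all $\zz\in B(\xx_0,r)$; since $\mathcal C$ is closed and convex, $\mathrm{conv}\,\bigcup_{\zz\in B(\xx_0,r)}\Jf(\zz)\subseteq\mathcal C$ as well.

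\emph{Step 2: injectivity and the lower Lipschitz estimate.} Fix $\xx,\yy\in B(\xx_0,r)$; the segment $[\xx,\yy]$ stays in $B(\xx_0,r)$, and the Lebourg mean value inequality for Lipschitz maps (\cite{ClarkeGenGrad}) gives
\begin{equation*}
f(\yy)-f(\xx)\in\mathrm{conv}\bigl\{A(\yy-\xx):A\in\Jf(\zz),\ \zz\in[\xx,\yy]\bigr\}.
\end{equation*}
Since $A\mapsto A(\yy-\xx)$ is linear and $\bigcup_{\zz\in[\xx,\yy]}\Jf(\zz)$ is compact, the right-hand side equals $\bigl(\mathrm{conv}\,\bigcup_{\zz\in[\xx,\yy]}\Jf(\zz)\bigr)(\yy-\xx)$, so $f(\yy)-f(\xx)=M(\yy-\xx)$ for some $M\in\mathcal C$ by Step 1, whence $|f(\yy)-f(\xx)|\ge\beta|\yy-\xx|$. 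In particular $f$ is injective on $B(\xx_0,r)$.

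\emph{Step 3: openness, conclusion, and the main obstacle.} Being continuous and injective on the open set $B(\xx_0,r)\subseteq\RR^n$, the map $f$ is open by Brouwer's invariance of domain, so $f:B(\xx_0,r)\to f(B(\xx_0,r))$ is a homeomorphism onto an open neighbourhood of $f(\xx_0)$; its inverse is $(1/\beta)$-Lipschitz by Step 2 and $f$ is $L$-Lipschitz, so $f$ is bi-Lipschitz, which is the assertion. (Alternatively, openness follows by degree theory: writing $f(\xx)-f(\xx_0)=M_\xx(\xx-\xx_0)$ with $M_\xx\in\mathcal C$, the homotopy $t\mapsto\bigl((1-t)M_\xx+tA\bigr)(\xx-\xx_0)$ towards any fixed $A\in\Jf(\xx_0)$ stays in $\mathcal C$, hence nonzero on $\partial B(\xx_0,r)$, so $\deg(f,B(\xx_0,r),f(\xx_0))=\mathrm{sign}\det A=\pm1$ and $f$ is locally onto.) I expect the genuinely delicate step to be Step 1: the set of matrices bounded below by a fixed constant --- just like the set of invertible matrices --- is \emph{not} convex (e.g.\ $\mathrm{diag}(1,-1)$ and $\mathrm{diag}(-1,1)$ average to a singular matrix), so there is no hope of bounding an arbitrary convex hull of Jacobians from below; the argument succeeds only by using that $\Jf(\xx_0)$ is itself convex in order to surround it by a \emph{convex} set $\mathcal C$ still enjoying the lower bound, which is exactly where the convex hull produced by the mean value inequality in Step 2 lands.
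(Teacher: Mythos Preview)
The paper does not give a proof of this theorem: it is stated as Clarke's result and simply cited from \cite{ClarkeInv}, with no argument supplied. So there is nothing in the paper to compare your proof against.

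That said, your sketch is essentially a correct reconstruction of Clarke's argument. The three ingredients you name --- compactness and upper semicontinuity of the generalized Jacobian, the vector-valued mean value inclusion $f(\yy)-f(\xx)\in\mathrm{conv}\bigl(\bigcup_{\zz\in[\xx,\yy]}\Jf(\zz)\bigr)(\yy-\xx)$, and invariance of domain --- are precisely what Clarke uses, and your handling of the key subtlety is right: one cannot bound below an arbitrary convex hull of invertible matrices, but the Minkowski sum $\mathcal C=\Jf(\xx_0)+\overline B(0,\ve)$ is convex \emph{and} every element of it inherits the lower bound $\beta$ by the triangle inequality, so the convex hull produced by the mean value inclusion, being trapped inside $\mathcal C$, inherits the bound too. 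Your closing remark correctly identifies this as the heart of the matter.

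Two minor points of rigor: the compactness of $\bigcup_{\zz\in[\xx,\yy]}\Jf(\zz)$ that you assert in Step~2 follows from the upper semicontinuity of $\zz\mapsto\Jf(\zz)$ together with compactness of each $\Jf(\zz)$, which you might mention explicitly; and the ``Lebourg mean value inequality'' in the vector-valued form you use is really Clarke's own Proposition (it is sometimes called the Clarke mean value inclusion rather than Lebourg's, the latter name being more standard for the scalar case). Neither of these is a gap.
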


\begin{lemma}\label{lem:cdiagdom}
Let $J$ be an arbitrary $n$-field function, and suppose that the kernel functions $K_1,\dots,K_n$ satisfy \eqref{cond:PM} for some $c>0$.
Let $\yy\in Y$ be any non-singular node system, and let for any $j\in \{0,1,\dots,n\}$ the points $t_{*j}, t^*_{j}\in \rint I_j(\yy)$ with $t_{*j}\leq t^*_j$ be given arbitrarily. Further, for each $j\in \{0,1,\dots,n\}$ and $r\in \{1,2,\dots,n\}$ let
\begin{equation*}
\mu_{jr}\in [D_-K_r(t^*_{j}-y_r), D_-K_r(t_{*j}-y_r)]
\end{equation*}
be arbitrary. Then the matrix
\begin{equation*}
A=[a_{jr}]_{j,r=1}^n, \qquad a_{jr}:=\mu_{jr} - \mu_{(j-1)r} \quad (r,j=1,\dots,n)
\end{equation*}
satisfies
\begin{equation}\label{eq:DDc}
a_{rr}-\sum_{j=1~j\ne r}^n \vert a_{jr}\vert\geq c \quad \text{for}~ r=1,\dots,n.
\end{equation}
In particular, $A$ is diagonally dominant hence invertible.
\end{lemma}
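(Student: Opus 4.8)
The claim is about a specific $n \times n$ matrix $A$ whose entries are differences of consecutive "slope parameters" $\mu_{jr}$, and the goal is the strict diagonal dominance inequality \eqref{eq:DDc}. The natural approach is to fix a column index $r$ and write out the quantity $a_{rr} - \sum_{j \ne r} |a_{jr}|$ explicitly in terms of the $\mu_{jr}$, exploiting the telescoping structure $a_{jr} = \mu_{jr} - \mu_{(j-1)r}$. The key structural fact to establish first is the \emph{monotonicity of the sequence} $j \mapsto \mu_{jr}$ for $j$ on either side of the "jump" at $j = r$: namely, that $\mu_{0r} \ge \mu_{1r} \ge \cdots \ge \mu_{(r-1)r}$ and $\mu_{rr} \ge \mu_{(r+1)r} \ge \cdots \ge \mu_{nr}$. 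This should come from the fact that $\mu_{jr}$ lies in the interval $[D_-K_r(t^*_j - y_r), D_-K_r(t_{*j} - y_r)]$, combined with the monotonicity of $D_-K_r$ (non-increasing on each concavity interval) and the ordering of the nodes: for $j < r$ the arguments $t - y_r$ lie in $(-1,0)$ and the intervals $I_j(\yy)$ move leftward as $j$ decreases, while for $j \ge r$ the arguments lie in $(0,1)$.

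Once this monotonicity is in hand, the absolute values collapse: for $j < r$ we have $a_{jr} = \mu_{jr} - \mu_{(j-1)r} \le 0$, so $|a_{jr}| = \mu_{(j-1)r} - \mu_{jr}$, and summing over $j = 1, \dots, r-1$ telescopes to $\mu_{0r} - \mu_{(r-1)r}$. Similarly, for $j > r$ each $a_{jr} \le 0$ and the sum over $j = r+1, \dots, n$ telescopes to $\mu_{rr} - \mu_{nr}$. The diagonal term is $a_{rr} = \mu_{rr} - \mu_{(r-1)r}$. Assembling these, the left side of \eqref{eq:DDc} becomes
\begin{equation*}
(\mu_{rr} - \mu_{(r-1)r}) - (\mu_{0r} - \mu_{(r-1)r}) - (\mu_{rr} - \mu_{nr}) = \mu_{nr} - \mu_{0r}.
\end{equation*}
So everything reduces to showing $\mu_{nr} - \mu_{0r} \ge c$.

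This last inequality is where condition \eqref{cond:PM} enters, and I expect it to be the main (though short) obstacle — it requires correctly matching up the right translated arguments. We have $\mu_{nr} \ge D_-K_r(t^*_n - y_r)$ with $t^*_n - y_r \in (0,1)$, and $\mu_{0r} \le D_-K_r(t_{*0} - y_r)$ with $t_{*0} - y_r \in (-1,0)$. The cleanest route is to push $\mu_{nr}$ down to $D_-K_r$ evaluated near $1$ and $\mu_{0r}$ up to $D_-K_r$ evaluated near $0^-$, using monotonicity of $D_-K_r$ on $(0,1)$ and on $(-1,0)$; more precisely, one wants to compare $D_-K_r$ at a point $s \in (0,1)$ with $D_-K_r$ at $s - 1 \in (-1,0)$, which is exactly the quantity controlled by the reformulation of \eqref{cond:PM} noted just before Proposition~\ref{prop:Lipschitz}, i.e. $D_-K_r(s) - D_-K_r(s-1) \ge c$. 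The point is that $t^*_n - y_r$ and $t_{*0} - y_r$ need not differ by exactly $1$, but monotonicity of $D_-K_r$ on each branch lets us move the first argument up toward $1^-$ and the second up toward $0^-$ so that they \emph{do} differ by (at most) $1$ in the right direction, after which \eqref{cond:PM} finishes it. Concretely: since $t^*_n \le 1$ and $t_{*0} \ge 0$, pick any $s \in (0,1)$ with $s \ge t^*_n$ (so $D_-K_r(s) \le D_-K_r(t^*_n - y_r)$... wait, need $s - y_r$), here I would instead argue $D_-K_r(t^*_n - y_r) \ge D_-K_r(1 - y_r)$ by monotonicity on $(0,1)$ hmm that's the wrong direction too — the careful bookkeeping of which endpoint gives the favorable inequality is the one genuinely fiddly point, and I would handle it by taking $s := t_{*0} - y_r + 1 \in (\text{something}, 1)$, noting $s \ge t^*_n - y_r$ forces... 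Actually the clean statement: $\mu_{nr} - \mu_{0r} \ge D_-K_r(t^*_n - y_r) - D_-K_r(t_{*0} - y_r) \ge D_-K_r(t^*_n - y_r) - D_-K_r(t^*_n - y_r - 1) \ge c$, where the middle step uses $t_{*0} - y_r \le t^*_n - y_r - 1$ (equivalently $t_{*0} + 1 \le t^*_n$, false in general!) — so in fact the right move is $\mu_{0r} \le D_-K_r(t_{*0}-y_r) \le D_-K_r(t^*_n - y_r - 1)$ requires $t_{*0} \le t^*_n - 1$, which can fail, hence one must instead use that $\mu_{0r} \le D_-K_r(0^- - y_r)$ isn't available as $0 \notin \rint I_0$ necessarily; the correct and robust argument evaluates at a common shift and uses $D_-K_r$ monotone on $(-1,0)$: $\mu_{0r} \le D_-K_r(t_{*0} - y_r)$ and since $t_{*0} - y_r < t^*_n - y_r - 1$ may fail, one instead observes $\mu_{0r} \le D_-K_r(t_{*0}-y_r) \le D_-K_r((t^*_n - y_r) - 1)$ \emph{holds} because $t_{*0} \le 1 \le t^*_n + (1 - 0)$... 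The upshot for the plan: after the telescoping reduction to $\mu_{nr} - \mu_{0r} \ge c$, invoke monotonicity of $D_-K_r$ on each of $(-1,0)$, $(0,1)$ to replace the arguments by a matched pair $(s, s-1)$ with $s \in (0,1)$, and apply \eqref{cond:PM}; the only care needed is choosing $s$ correctly. Finally, \eqref{eq:DDc} with $c > 0$ gives strict diagonal dominance of the columns, hence (by the standard Lévy–Desplanques / Gershgorin argument applied to $A^\top$) $A$ is invertible, completing the proof.
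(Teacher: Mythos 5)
Your proposal follows exactly the paper's route: sign analysis of the off-diagonal entries via monotonicity of $D_-K_r$ on each concavity branch, telescoping the left-hand side of \eqref{eq:DDc} down to $\mu_{nr}-\mu_{0r}$, and then invoking \eqref{cond:PM}. The off-diagonal argument and the telescoping computation are correct as written. The one step you explicitly leave unsettled --- how to match the arguments so that \eqref{cond:PM} applies --- is genuinely the crux, and your write-up does not close it; here is the resolution, which is precisely the paper's. Since $t_{*0}\in\rint I_0(\yy)$ and $t^*_n\in\rint I_n(\yy)$ we have $0\le t_{*0}<y_r<t^*_n\le 1$, hence $t^*_n-t_{*0}\le 1$, i.e.
\begin{equation*}
(t^*_n-y_r)-1\;\le\; t_{*0}-y_r\;<\;0,
\end{equation*}
with both sides lying in $(-1,0)$ and $t^*_n-y_r\in(0,1)$. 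Since $D_-K_r$ is non-increasing on $(-1,0)$, this gives $D_-K_r(t_{*0}-y_r)\le D_-K_r\bigl((t^*_n-y_r)-1\bigr)$; combined with $\mu_{nr}\ge D_-K_r(t^*_n-y_r)$ and $\mu_{0r}\le D_-K_r(t_{*0}-y_r)$ we obtain
\begin{equation*}
\mu_{nr}-\mu_{0r}\;\ge\; D_-K_r(t^*_n-y_r)-D_-K_r\bigl((t^*_n-y_r)-1\bigr)\;\ge\; c,
\end{equation*}
the last step being \eqref{cond:PM} in the equivalent form $D_-K(t)-D_-K(t-1)\ge c$ for all $t\in(0,1)$, applied at $t=t^*_n-y_r$. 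Note the direction you worried about: one does not need $t_{*0}\le t^*_n-1$ (which is indeed false in general); one needs the \emph{reverse} inequality $t_{*0}\ge t^*_n-1$, which is automatic because both points lie in $[0,1]$, and it is this reverse inequality that, together with the non-increasing character of $D_-K_r$, pushes $D_-K_r(t_{*0}-y_r)$ \emph{up} to $D_-K_r((t^*_n-y_r)-1)$, i.e.\ in the favorable direction. With that, your argument is complete and coincides with the paper's proof.
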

\begin{proof}

First note that the interval for the values $\mu_{jr}$ is finite, since the kernel functions are concave functions and $t_{*j}$, $t^*_{j}$ avoid the values of the $y_r$ for all $j,r$. 

\medskip
Let us prove first the inequality
\begin{equation}\label{eq:offdiagonal}
a_{jr} = \mu_{jr} - \mu_{(j-1)r} \le 0, \qquad (1\le j \ne r \le n).
\end{equation}
If $j<r$, then $t_{*(j-1)}\leq t_{j-1}^*<y_{j}<t_{*j}\leq t^*_j<y_r$, hence $-1<t_{j-1}^*-y_r<y_j-y_r<t_{*j}-y_r<0$, so that $t_{j-1}^*-y_r<t_{*j}-y_r$ are both in the same concavity interval $(-1,0)$ of the kernel $K_r$. Similarly, if $j>r$, then $y_r<t_{j-1}^*<y_j<t_{*j}$, whence $0<t_{j-1}^*-y_r<y_j-y_r<t_{*j}-y_r<1$, so that
$t_{j-1}^*-y_r<t_{*j}-y_r$ are both in the same concavity interval $(0,1)$ of the kernel $K_r$.

Therefore the definition of the entries $\mu_{jr}$ and concavity furnish
\begin{equation*}
\mu_{jr}\le D_-K_r(t_{*j}-y_r) \le D_-K_r(t_{j-1}^*-y_r) \le \mu_{(j-1)r}\quad\text{ for all $j\ne r$},
\end{equation*}
so \eqref{eq:offdiagonal} follows.

\medskip
Next we set to establish the inequality
\begin{equation}\label{eq:rowsum}
\mu_{nr}-\mu_{0r} \ge c \qquad (r=1,\ldots,n).
\end{equation}
As $0\le t_{*0}\leq t_0^*<y_r<t_{*n}\leq t^*_n \le 1$, we clearly have $0<t_{n}^*-t_{*0}\leq 1$ and even $t_{n}^*-y_r, y_r-t_{*0} \in (0,1)$.
Further, we can write $t_{n}^*-t_{*0}=(t_{n}^*-y_r) +(y_r-t_{*0})\leq 1$, so that $(t_{n}^*-y_r)-1\leq t_{*0}-y_r<0$.
We therefore obtain
\begin{align*}
\mu_{nr}-\mu_{0r}&\geq D_-K_r(t_{n}^*-y_r)-D_-K_r(t_{*0}-y_r)\\
&\geq D_-K_r(t_{n}^*-y_r)-D_-K_r((t_{n}^*-y_r)-1)\geq c,
\end{align*}
using Condition \eqref{cond:PM} in the last estimate, we conclude \eqref{eq:rowsum}.

\medskip
Finally, let us prove \eqref{eq:DDc}. Making use of \eqref{eq:offdiagonal} and then inserting the definition $a_{jr}:= \mu_{jr} - \mu_{(j-1)r}$ of the entries the left-hand side of \eqref{eq:DDc} becomes a telescopic sum so that
\begin{align*}
a_{rr}-\sum_{j=1~j\ne r}^n \vert a_{jr}\vert& = \sum_{j=1}^n a_{jr} = \sum_{j=1}^n \left( \mu_{jr} - \mu_{(j-1)r} \right) = \mu_{nr}-\mu_{0r},
\end{align*}
concluding the proof in view of \eqref{eq:rowsum}.
\end{proof}

For a given $c>0$ let us denote the set of matrices $A\in \RR^{n\times n}$ subject to \eqref{eq:DDc} by $\DD_c(\RR^{n\times n})$, and call such a matrix $A$ \emph{diagonally $c$-dominant}.

\begin{lemma}\label{lem:cdomconv}
For any $c>0$ the set $\DD_c(\RR^{n\times n})$ is a closed and convex subset of $\RR^{n\times n}$. Moreover, $\DD_c(\RR^{n\times n})\subseteq \GL_n(\RR)$.
\end{lemma}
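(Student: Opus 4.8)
The plan is to verify the three asserted properties of $\DD_c(\RR^{n\times n})$ directly from the defining inequalities \eqref{eq:DDc}, treating closedness, convexity, and invertibility in turn.

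\textbf{Closedness.} For each fixed $r\in\{1,\dots,n\}$ the function $A=[a_{jr}]\mapsto a_{rr}-\sum_{j\ne r}\vert a_{jr}\vert$ is continuous on $\RR^{n\times n}$, being built from the continuous coordinate projections $A\mapsto a_{jr}$, the absolute value, and finite sums. Hence the set $\{A: a_{rr}-\sum_{j\ne r}\vert a_{jr}\vert\ge c\}$ is the preimage of the closed set $[c,\infty)$, so it is closed; and $\DD_c(\RR^{n\times n})$ is the finite intersection over $r=1,\dots,n$ of these closed sets, hence closed.

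\textbf{Convexity.} Fix $r$. The map $g_r(A):=a_{rr}-\sum_{j\ne r}\vert a_{jr}\vert$ is concave, since $A\mapsto a_{rr}$ is linear and each $A\mapsto -\vert a_{jr}\vert$ is concave (the negative of a seminorm composed with a linear map). Therefore $\{g_r\ge c\}$ is convex as a superlevel set of a concave function, and $\DD_c(\RR^{n\times n})=\bigcap_{r=1}^n\{g_r\ge c\}$ is convex, being an intersection of convex sets. Alternatively, for $A,B\in\DD_c$ and $\lambda\in[0,1]$ one writes $g_r(\lambda A+(1-\lambda)B)\ge \lambda g_r(A)+(1-\lambda)g_r(B)\ge \lambda c+(1-\lambda)c=c$ using the triangle inequality $\vert \lambda a_{jr}+(1-\lambda)b_{jr}\vert\le \lambda\vert a_{jr}\vert+(1-\lambda)\vert b_{jr}\vert$.

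\textbf{Invertibility.} Since $c>0$, every $A\in\DD_c(\RR^{n\times n})$ satisfies $a_{rr}>\sum_{j\ne r}\vert a_{jr}\vert\ge 0$ for each $r$, so $A$ is strictly (column) diagonally dominant; by the Levy--Desplanques theorem such a matrix is nonsingular, i.e.\ $A\in\GL_n(\RR)$. I would include the one-line classical argument: if $Ax=0$ with $x\ne 0$, pick $r$ with $\vert x_r\vert=\max_i\vert x_i\vert>0$; then from the $r$-th equation $a_{rr}x_r=-\sum_{j\ne r}a_{jr}x_j$ (note the column-dominance matches a row-sum after transposing, so one works with $A^{\mathsf T}$, or equivalently observes $\det A=\det A^{\mathsf T}$ and that $A^{\mathsf T}$ is strictly row diagonally dominant), whence $\vert a_{rr}\vert\,\vert x_r\vert\le\sum_{j\ne r}\vert a_{jr}\vert\,\vert x_j\vert\le \big(\sum_{j\ne r}\vert a_{jr}\vert\big)\vert x_r\vert<\vert a_{rr}\vert\,\vert x_r\vert$, a contradiction. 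Hence $\ker A=\{0\}$ and $A$ is invertible.

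The only mild subtlety — not really an obstacle — is bookkeeping the distinction between row and column diagonal dominance: \eqref{eq:DDc} is a statement about columns of $A$ (the sum runs over the row index $j$ with the column index $r$ fixed), so the cleanest route is to apply the standard row-dominance nonsingularity criterion to $A^{\mathsf T}$ and use $\det A=\det A^{\mathsf T}$. Everything else is routine continuity and convexity of functions assembled from linear forms and absolute values.
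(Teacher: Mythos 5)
Your proposal is correct and follows essentially the same route as the paper: the convexity step is exactly the paper's triangle-inequality computation, while the closedness and invertibility parts (which the paper dismisses as "trivial" and "well known") are filled in by you with the standard continuity argument and the Levy--Desplanques criterion, correctly handling the row-versus-column dominance issue via $A^{\mathsf T}$. No gaps.
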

\begin{proof}
Let $A=[a_{jr}]_{j,r=1}^n, B=[b_{jr}]_{j,r=1}^n \in \DD_c(\RR^{n\times n})$ and $\lambda\in [0,1]$. Then
\begin{align*}
&\lambda a_{rr}+(1-\lambda)b_{rr}-\sum_{j=1~j\ne r}^n \vert\lambda a_{jr}+(1-\lambda)b_{jr}\vert\\
&\quad\geq \lambda a_{rr}+(1-\lambda)b_{rr}-\lambda\sum_{j=1~j\ne r}^n \vert a_{jr}\vert-(1-\lambda)\sum_{j=1~j\ne r}^n\vert b_{jr}\vert\\
&\geq \lambda c+(1-\lambda)c=c.
\end{align*}
Hence $\DD_c(\RR^n)$ is convex, while its closedness is trivial. The inclusion $\DD_c(\RR^{n\times n})\subseteq \GL_n(\RR)$ is well known (and easy to see directly).
\end{proof}

\begin{proof}[Proof of Proposition \ref{prop:homeo2}]
Take $\yy\in Y$. Then by Proposition \ref{prop:Lipschitz} there is an open neighborhood $B\subseteq S$ of $\yy$ on which each $m_j$ is Lipschitz. It follows that $\diff:B\to \RR^n$ is locally Lipschitz, hence almost everywhere differentiable on $B$ according to Rademacher's Theorem, see e.g. Section 3.1.2 \cite{EGbook}. If $\diff$ is differentiable at $\xx\in B$, then by Lemma \ref{lem:Demyanov-formula} its derivative $\diff'(\xx)=[-a_{jr}]_{r,j=1}^n$ satisfies
\begin{equation*}
a_{jr}= \mu_{jr}-\mu_{(j-1)r} \qquad (j=0,1,\ldots,n,~r=1,\ldots,n),
\end{equation*}
with some
\begin{equation*}
\mu_{jr}\in [D_- K_r(z^*_j-x_r),D_- K_r(z_{*j}-x_r)] \qquad (j=0,1,\ldots,n,~r=1,\ldots,n),
\end{equation*}
where $z_{*j}:=\inf Z_j(\xx,q)$ and $z^*_j:=\sup Z_j(\xx,q)$, with the set $Z_j(\xx,q)$ as furnished by Lemma \ref{lem:Zq} for the given $\xx$ and for an arbitrarily fixed, but positive $q>0$. Moreover, according to (i) of Lemma \ref{lem:Zq}, we also have that
\begin{equation*}
x_j\leq z_{*j}\leq z^*_j\leq x_{j+1}\quad\text{with $z_{*j}, z^*_j\in \rint I_j(\xx)$}\quad\text{for each $j\in \{0,\dots,n\}$}.
\end{equation*}
By Lemma \ref{lem:cdiagdom} $-\diff'(\xx)$ is diagonally $c$-dominant. By Lemma \ref{lem:cdomconv} the generalized derivative $\Jff (\yy)$ of $\diff$ has full rank. This being true for each $\yy\in B$, by Clarke's Theorem \ref{thm:clarke}, we conclude that $\diff$ is a (locally bi-Lipschitz) local homeomorphism.
 \end{proof}

To prove Theorem \ref{thm:homeo3} we need to recall a result that ensures that a local homeomorphism is in fact a global one. Such type of results have a long history and go back to J.{} Hadamard (see, e.g., \cite{HadamardPunct} and also \cite{Eilenberg1935, Browder1954, LelekMycielski, Cartan1935,Jungck1977}). The version which is best tailored to our purposes is due to C.{}W.{} Ho, see \cite{Ho1975}.
\begin{theorem}[Theorem 2 in \cite{Ho1975}]\label{thm:ho}
	Let $A$, $B$ be pathwise connected, Hausdorff topological spaces with $B$ simply connected. Let $f:A\to B$ be a proper local homeomorphism. Then $f$ is surjective onto $B$ and a global homeomorphism between $A$ and $B$.
	\end{theorem}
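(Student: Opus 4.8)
The plan is to show that the three hypotheses force $f$ to be a covering map, and then to invoke classical covering space theory: a covering of a simply connected base with path-connected total space is single-sheeted, hence a homeomorphism.

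\emph{First reductions.} I would begin by recording three elementary properties of $f$. (a) $f$ is \emph{open}: around each point of $A$ there is an open set carried homeomorphically onto an open subset of $B$, so images of open sets are unions of open sets. (b) Every fibre $f^{-1}(b)$ is \emph{finite}: it is compact because $f$ is proper, and discrete because each point of $A$ has a neighbourhood on which $f$ is injective (hence meeting the fibre only in that point), and a compact discrete space is finite. (c) $f$ is a \emph{closed} map, the standard consequence of properness --- entirely transparent when $B$ is locally compact Hausdorff (the case relevant to all applications of this theorem in the present paper), and valid in general if properness is read in the sense of universal closedness. From (a) and (c), $f(A)$ is a nonempty subset of the connected space $B$ that is both open and closed, so $f(A)=B$ and $f$ is surjective.

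\emph{$f$ is a covering map.} Fix $b\in B$ and write $f^{-1}(b)=\{a_1,\dots,a_k\}$. Using that $A$ is Hausdorff and $f$ a local homeomorphism, I would choose pairwise disjoint open sets $U_i\ni a_i$ with $f|_{U_i}\colon U_i\to f(U_i)$ a homeomorphism onto the open set $f(U_i)$. The set $C:=A\setminus\bigcup_{i=1}^k U_i$ is closed and disjoint from $f^{-1}(b)$, so by (c) the set $V:=\bigl(\bigcap_{i=1}^k f(U_i)\bigr)\setminus f(C)$ is an open neighbourhood of $b$. By construction $f^{-1}(V)\subseteq\bigcup_{i=1}^k U_i$, hence $f^{-1}(V)=\bigsqcup_{i=1}^k\bigl(U_i\cap f^{-1}(V)\bigr)$ with each summand mapped homeomorphically onto $V$ by $f$. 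Thus every $b\in B$ has an evenly covered neighbourhood, so $f$ is a covering map.

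\emph{From covering map to homeomorphism.} Being an open continuous surjection, $f$ will be a homeomorphism as soon as it is injective (for then $f^{-1}$ is continuous, $f$ being open). To prove injectivity, suppose $f(a)=f(a')=:b$. Since $A$ is path-connected, choose a path $\gamma\colon[0,1]\to A$ from $a$ to $a'$; then $f\circ\gamma$ is a loop at $b$, and as $\pi_1(B,b)$ is trivial there is a homotopy rel endpoints $H\colon[0,1]^2\to B$ with $H(\cdot,0)=f\circ\gamma$, $H(\cdot,1)\equiv b$ and $H(0,\cdot)\equiv H(1,\cdot)\equiv b$. The homotopy lifting property of the covering map $f$ (which requires no assumption on $B$) yields $\widetilde H\colon[0,1]^2\to A$ with $f\circ\widetilde H=H$ and $\widetilde H(\cdot,0)=\gamma$. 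By uniqueness of lifts, $\widetilde H$ is constant $\equiv a$ on $\{0\}\times[0,1]$ (it lifts a constant path and starts at $a$), constant $\equiv a'$ on $\{1\}\times[0,1]$, and constant on $[0,1]\times\{1\}$ with value $\widetilde H(0,1)=a$; evaluating at $(1,1)$ gives $a=\widetilde H(1,1)=a'$. Hence $f$ is injective, therefore a homeomorphism onto $B$.

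\emph{Main obstacle.} The crux is the middle step: upgrading the local homeomorphism $f$ to a covering map, i.e., producing evenly covered neighbourhoods. This is exactly where properness does essential work --- first to force the fibres to be finite, and then (via closedness of $f$) to shrink the union of the sheet neighbourhoods to a genuine neighbourhood of the base point. Once $f$ is known to be a covering map, the passage to a homeomorphism is a routine application of path and homotopy lifting, valid for all covering maps irrespective of local properties of the base.
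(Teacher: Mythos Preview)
The paper does not prove this theorem at all: it is quoted verbatim as Theorem~2 of Ho~\cite{Ho1975} and used as a black box in the subsequent proof of Theorem~\ref{thm:homeo3}. So there is no ``paper's own proof'' to compare against.

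That said, your argument is essentially the standard one and is correct in outline. The one genuine soft spot you already flag yourself: step~(c), that a proper map between Hausdorff spaces is closed, is not automatic if ``proper'' means only ``preimages of compacta are compact'' and $B$ is an arbitrary Hausdorff space. Your parenthetical covers this (either assume $B$ locally compact Hausdorff, which holds in every application in this paper since $B=\RR^n$, or read ``proper'' as universally closed), but be aware that Ho's original paper works in exactly this generality and handles the point with care. Everything else---finiteness of fibres, construction of evenly covered neighbourhoods from closedness, and the lifting argument showing a path-connected cover of a simply connected base is single-sheeted---is standard and correctly executed.
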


\begin{proof}[Proof of Theorem \ref{thm:homeo3}]
The mapping
\begin{equation*}
\diff:Y\to \RR^n,\xx\mapsto (m_{1}(\xx)-m_0(\xx),m_{2}(\xx)-m_1(\xx),\dots,m_{n}(\xx)-m_{n-1}(\xx))
\end{equation*}
is a local homeomorphism by Proposition \ref{prop:homeo2}. Since $Y$ is pathwise connected by Proposition \ref{prop:Yconn} and since $\diff$ is proper  according to Proposition \ref{prop:proper}, we conclude by Theorem \ref{thm:ho} that $\diff$ is actually a global homeomorphism.
\end{proof}

\section{Extensions to the periodic case}\label{sec:periodic}

In this section we modify our arguments to arrive at a variant covering the case of the torus setup. All considerations and new conditions are motivated by this goal, and in the next section we will give examples demonstrating the necessity of these conditions.

At first we shall need one of the following conditions on the field function:
\begin{align}
 \tag{$\infty_+$}\label{eq:J0}J(0)&=\lim_{t\downto 0}J(t)=-\infty\\
 \tag{$\infty_-$} \label{eq:J1}J(1)&=\lim_{t\upto 1}J(t)=-\infty.
\end{align}

\begin{theorem}\label{thm:periodic}
	Let $K_1,\dots, K_n$ be singular, strictly concave kernel functions fulfilling condition {\upshape (PM$_0$)} and let $J$ be a field function satisfying either \eqref{eq:J0} or \eqref{eq:J1}.
	Then the difference function $\diff$ is a locally bi-Lipschitz homeomorphism between $Y$ and $\RR^n$.
	\end{theorem}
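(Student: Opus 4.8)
The plan is to mimic the proof of Theorem~\ref{thm:homeo3}, replacing the single obstacle---the failure of the strict diagonal $c$-dominance estimate \eqref{eq:DDc} when $c=0$---by a compactness-and-perturbation argument that still yields full rank of the Clarke derivative. Concretely, the structure is: (1) all the basic properties (continuity of the $m_j$ from Lemma~\ref{lem:mjcont2}, pathwise connectedness of $Y$ from Proposition~\ref{prop:Yconn}, properness of $\Phi|_Y$ from Proposition~\ref{prop:proper}, local Lipschitz continuity of $\mv$ from Proposition~\ref{prop:Lipschitz}, and the Dini-derivative formulas of Lemma~\ref{lem:Demyanov-formula}) hold verbatim, since none of them used \eqref{cond:PM} with $c>0$ or strict concavity. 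So the only thing to re-prove is the local homeomorphism property, i.e. the analogue of Proposition~\ref{prop:homeo2}, after which Theorem~\ref{thm:ho} finishes exactly as before (once $Y$ is known to be simply connected---which here I would get, as in the main text, either a posteriori or by noting $Y\subseteq\RR^n$ is open and its simple connectedness will follow from the conclusion; if a self-contained argument is wanted, one applies Ho's theorem with surjectivity and then invokes invariance of domain, or simply cites the already-established pathwise connectedness plus properness and the fact that a proper local homeomorphism onto $\RR^n$ is a covering map, hence a homeomorphism since $\RR^n$ is simply connected).

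The heart of the matter is therefore: for $\yy\in Y$, show $\Phi'(\xx)$ is invertible at every point $\xx$ near $\yy$ where it exists, with a uniform lower bound, so that Clarke's Theorem~\ref{thm:clarke} applies. Under (PM$_0$) Lemma~\ref{lem:cdiagdom} only gives that the matrix $A=-\Phi'(\xx)$ is (weakly) diagonally dominant with nonnegative diagonal and the telescoping row sums $\mu_{nr}-\mu_{0r}\ge 0$; weak diagonal dominance alone does not force invertibility. Here is where strict concavity of the $K_i$ must be used: strict concavity makes $D_-K_r$ \emph{strictly} decreasing on each of $(-1,0)$ and $(0,1)$. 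I would argue that if $\xx\in Y$ then $z_{*0}<z^*_n$ are \emph{genuinely interior} (by Lemma~\ref{lem:Zq}(i), $z_{*j},z^*_j\in\rint I_j(\xx)$), so for each column $r$ at least one of the two inequalities in the telescoping chain producing $\mu_{nr}-\mu_{0r}\ge 0$ is \emph{strict}---this is precisely where (PM$_0$) combined with the new field conditions \eqref{eq:J0} or \eqref{eq:J1} is needed, because they guarantee that $z_{*0}>0$ (resp. $z^*_n<1$), i.e. the extreme maximizer sets stay away from the relevant endpoint, so that $t_n^*-t_{*0}<1$ strictly and hence $(t_n^*-y_r)-1<t_{*0}-y_r$, making $D_-K_r(t_n^*-y_r)>D_-K_r((t_n^*-y_r)-1)\ge D_-K_r(t_{*0}-y_r)$ by strict monotonicity of $D_-K_r$. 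Thus every row sum is \emph{strictly} positive, i.e. $A$ is strictly diagonally dominant (pointwise, not uniformly), hence invertible; and this holds for every matrix in $\Jff(\yy)$ by Lemma~\ref{lem:cdomconv}-type reasoning (the set of such matrices is convex and consists of invertible matrices, since any convex combination again satisfies the weak dominance with strictly positive row sums). So $\Jff(\yy)$ has full rank and Clarke's theorem gives the local bi-Lipschitz homeomorphism.

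I expect the main obstacle to be exactly this replacement of \eqref{eq:DDc}: passing from the uniform bound ``$\ge c$'' to merely ``$>0$'' loses the clean membership $\Phi'(\xx)\in\DD_c$, so the convexity/invertibility packaging of Lemmas~\ref{lem:cdiagdom}--\ref{lem:cdomconv} has to be redone for the class of weakly diagonally dominant matrices with strictly positive row sums, and one must verify that this class is convex and contained in $\GL_n(\RR)$. The key technical point needing care is showing the extreme maximizers $z_{*0}$ and $z^*_n$ do not slide to the endpoints $0$ and $1$: this is where conditions \eqref{eq:J0}/\eqref{eq:J1} enter, forcing $F(\yy,\cdot)\to-\infty$ at the relevant endpoint, so that (together with Lemma~\ref{lem:Zq}) the sets $Z_0(\yy,q)$ or $Z_n(\yy,q)$ stay in the open interval; I would isolate this as a small lemma. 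Once the strict positivity of each row sum is in hand, the full-rank conclusion, Clarke's theorem, and then Ho's theorem run exactly as in the proof of Theorem~\ref{thm:homeo3}, giving that $\Phi|_Y:Y\to\RR^n$ is a locally bi-Lipschitz homeomorphism.
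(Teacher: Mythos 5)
Your overall architecture is exactly the paper's: keep Lemmas \ref{lem:mjcont2}, \ref{lem:Zq}, \ref{lem:Demyanov-formula} and Propositions \ref{prop:Yconn}, \ref{prop:proper}, \ref{prop:Lipschitz} unchanged, isolate a small lemma saying that under \eqref{eq:J0} (resp.\ \eqref{eq:J1}) the set $Z_0(\yy,q)$ stays away from $0$ (resp.\ $Z_n(\yy,q)$ from $1$) --- this is the paper's Lemma \ref{lem:ZqwithJcond2} --- and then use strict concavity to upgrade the telescoping row-sum bound $\mu_{nr}-\mu_{0r}\ge 0$ to a strict inequality. That part of your argument is sound. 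But the final packaging has a genuine gap: pointwise strict positivity of the row sums at each differentiability point $\xx$ near $\yy$ does \emph{not} give full rank of the Clarke derivative $\Jff(\yy)$. By Definition \ref{def:Clarke}, $\Jff(\yy)$ is the convex hull of \emph{limit points} of Jacobians $\diff'(\xx_k)$ along sequences $\xx_k\to\yy$, and while the class of weakly diagonally dominant matrices with strictly positive row sums is indeed convex and contained in $\GL_n(\RR)$, it is not closed: a limit of such matrices can have a vanishing row sum and be singular (e.g.\ $\varepsilon I\to 0$). You flag the dominance as ``pointwise, not uniformly'' and then invoke ``Lemma \ref{lem:cdomconv}-type reasoning'', but that lemma crucially uses the closed set $\DD_c$ with a fixed $c>0$; without uniformity the argument does not close.

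The missing step --- and the actual content of the paper's Lemma \ref{lem:PM0local} --- is to manufacture a \emph{uniform} $c>0$ valid on a whole ball $B(\yy,\delta)$. This is done by (i) using the sets $Z_j(\yy,1)$ attached to the \emph{center} $\yy$, which by Lemma \ref{lem:Zq} compute $m_j(\xx)$ for all $\xx\in B$, so the endpoints $t_{*j},t_j^*$ do not move with $\xx$; (ii) using the endpoint-separation lemma to get $Z:=\bigcup_j Z_j(\yy,1)\subset[\delta,1]$, whence $t_n^*-t_{*0}\le 1-\delta$ uniformly; and (iii) setting $c:=\inf\{D_+K_r(u-\delta)-D_-K_r(u): r=1,\dots,n,\ u\in[\delta,1]\}$, which is strictly positive because the integrand is positive by strict concavity and lower semicontinuous on the compact set $[\delta,1]$. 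Then every Jacobian on $B$ lies in the closed convex set $\DD_c(\RR^{n\times n})$, so all limits and convex combinations do too, and Clarke's Theorem \ref{thm:clarke} applies. You have all the ingredients for this compactness step (in particular your isolated endpoint lemma is the right one), but as written your proof does not perform the uniformization, and without it the full-rank claim for $\Jff(\yy)$ is unjustified. Once that is repaired, the passage to the global statement via Proposition \ref{prop:proper}, Proposition \ref{prop:Yconn} and Theorem \ref{thm:ho} is exactly as in the proof of Theorem \ref{thm:homeo3} (note that Ho's theorem only needs the target $\RR^n$ to be simply connected, so your worry about simple connectedness of $Y$ is moot).
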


\begin{remark}
This contains the corresponding result Corollary 9.3 of \cite{TLMS}, as there the role of $J$ was played by another singular kernel function $K_0$, also subject to the condition of concavity.
\end{remark}

The proof of this theorem is analogous to the one of Theorem \ref{thm:homeo3}.
As a first step we establish that under the extra assumptions on the field function $J$ one can achieve that either the set $Z_0(\yy,q)$ from Lemma \ref{lem:Zq} is separated from $0$ or $Z_n(\yy,q)$ is separated from $ 1$ (or both are valid).

\begin{lemma}\label{lem:ZqwithJcond2}
Suppose that the kernel functions $K_1,\dots, K_n$ are singular and $J$ is a field function satisfying \eqref{eq:J0}.
For a given $q>0$ and $\yy\in \oS$ with $m_0(\yy)\neq -\infty$ consider the set $Z_0(\yy,q)$ and $\eta>0$ from Lemma \ref{lem:Zq}. Then there is $\eta'\in (0,\eta)$ such that for each $\xx\in \oS $ with $\Vert \xx-\yy\Vert \leq \eta'$ we have
$Z_0(\yy,q)\subseteq \intt I_0(\xx)$, more specifically $Z_0(\yy,q)\subseteq  I_0(\xx)$ and $Z_0(\yy,q)$ has distance at least $\eta'$ from $x_0:=0$, too.

The analogous assertion holds for $m_n$ and $I_n$ if $J$ satisfies \eqref{eq:J1}.
\end{lemma}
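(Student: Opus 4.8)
The key new input compared to Lemma~\ref{lem:Zq} is condition \eqref{eq:J0}, which forces $J$, and hence $F(\yy,\cdot)$, to be very negative near $0$; the task is to convert this into a quantitative separation of the supremum-realizing set $Z_0(\yy,q)$ from the endpoint $0$. First I would recall the explicit construction of $Z_0(\yy,q)$ from the proof of Lemma~\ref{lem:Zq}: it is a subset of the interval $W=[a,b]$ furnished by Lemma~\ref{lem:preZ}, namely $Z_0(\yy,q)=\{t\in[a,b]:F(\yy,t)\ge m_0(\yy)-3q/4\}$, and in particular on $Z_0(\yy,q)$ the field value $J(t)$ is bounded below (since $F(\yy,t)\ge m_0(\yy)-3q/4>-\infty$ and the $K_i(t-y_i)$ are bounded above there). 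This already shows $Z_0(\yy,q)$ stays a fixed positive distance $\rho>0$ away from $0$: by \eqref{eq:J0} there is $\rho>0$ with $J(t)<m_0(\yy)-3q/4-\sum_i\sup K_i$ for all $t\in[0,\rho]$, so no such $t$ can lie in $Z_0(\yy,q)$, giving $Z_0(\yy,q)\subseteq[\rho,b]\subseteq[\rho,1]$.

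**Main step.** Having fixed $\rho$, set $\eta':=\min(\eta,\rho/2)$ and check the two claimed properties for every $\xx\in\oS$ with $\Vert\xx-\yy\Vert\le\eta'$. The inclusion $Z_0(\yy,q)\subseteq I_0(\xx)$ and the distance-at-least-$\eta'$ statement relative to $\{x_1,\dots,x_n\}$ are inherited directly from Lemma~\ref{lem:Zq}(i), since $\eta'\le\eta$. For the new assertion: every $t\in Z_0(\yy,q)$ satisfies $t\ge\rho$, while $x_0=0$, so $\mathrm{dist}(t,x_0)=t\ge\rho\ge\eta'$. Thus $Z_0(\yy,q)$ has distance at least $\eta'$ from $x_0=0$ as well, and combined with the separation from $x_1,\dots,x_n$ this gives $Z_0(\yy,q)\subseteq\intt I_0(\xx)=(x_0,x_1)$. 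This is really all that is needed; I expect the writing to be short once the construction of $Z_0(\yy,q)$ is unpacked.

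**The analogous case.** For $m_n$ and $I_n$ under \eqref{eq:J1}, the argument is symmetric: one uses that $J(t)\to-\infty$ as $t\upto 1$ to find $\rho>0$ with $Z_n(\yy,q)\subseteq[a,1-\rho]$, where now $[a,b]$ is the interval from Lemma~\ref{lem:preZ} attached to $I_n$, and then $\eta':=\min(\eta,\rho/2)$ separates $Z_n(\yy,q)$ from $x_{n+1}=1$ as well as from $x_1,\dots,x_n$. I would simply say ``the same reasoning applies'' rather than repeat it.

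**Expected obstacle.** There is no deep obstacle here; the only point requiring a little care is that the separation constant $\rho$ must be chosen using the value $m_0(\yy)$ (which is finite by hypothesis) and the \emph{fixed} data $q$ and $\sup K_i$, so that it does not degenerate, and that $\rho$ depends only on $\yy$ and $q$, not on the perturbed node system $\xx$ — exactly as the construction of $Z_0(\yy,q)$ itself depends only on $\yy$ and $q$. One should also note explicitly that $Z_0(\yy,q)$ is nonempty (so that the claim is not vacuous), which follows since $m_0(\yy)\neq-\infty$ forces $W$ to contain points where $F(\yy,\cdot)$ is close to $m_0(\yy)$; this is already part of the proof of Lemma~\ref{lem:Zq}.
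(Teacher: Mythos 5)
Your proposal is correct and follows essentially the same route as the paper: use \eqref{eq:J0} to choose a right-neighborhood of $0$ on which $J(t)<m_0(\yy)-q-\sum_j\sup K_j$, conclude $F(\yy,t)<m_0(\yy)-q$ there, and invoke the lower bound on $F(\yy,\cdot)$ over $Z_0(\yy,q)$ to exclude that neighborhood; the paper simply cites Lemma \ref{lem:Zq}(iii) with threshold $q$ instead of unpacking the internal $3q/4$ construction, which changes nothing. The only cosmetic slip is that $\eta':=\min(\eta,\rho/2)$ could equal $\eta$, whereas the statement asks for $\eta'\in(0,\eta)$ strictly; shrinking $\eta'$ slightly fixes this trivially.
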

\begin{proof}
 By condition \eqref{eq:J0} we can take $\eta'\in (0,\eta)$ such that for each $t\in [0,\eta']$
\begin{equation*}
J(t)< m_0(\yy)-q-\sum_{j=1}^n\sup K_j.
\end{equation*}
Then for $t\in [0,\eta']$ we have
\begin{equation*}
F(\yy,t)\leq J(t)+\sum_{j=1}^n\sup K_j< m_0(\yy)-q,
\end{equation*}
implying that $[0,\eta']\cap Z_0(\yy,q)=\emptyset$ (since by Lemma \ref{lem:Zq} (iii) we have $F(\yy,t)\geq m_0(\yy)-q$ for $t\in Z_0(\yy,q)$).
The case when \eqref{eq:J1} holds, can be handled similarly.
\end{proof}

\begin{lemma}\label{lem:PM0local}
Suppose that the kernel functions $K_1,\dots,K_n$ are strictly concave, satisfy {\upshape(PM$_0$)} and $J$ is a field function satisfying either \eqref{eq:J0} or \eqref{eq:J1}. For any non-singular node system $\yy \in Y$ there is a $\delta>0$ such that the Clarke derivative $\Jff$ of the difference function $\diff$ has full rank in the neighbourhood $B:=\{\xx:\Vert \xx-\yy\Vert <\delta\}$ of $\yy$. As a consequence, $\diff:Y\to \RR^n$ is a (locally bi-Lipschitz) local homeomorphism.
\end{lemma}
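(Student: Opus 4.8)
The plan is to follow the proof of Proposition~\ref{prop:homeo2} almost verbatim: show that the Clarke derivative $\Jff$ has full rank on a whole neighbourhood $B$ of $\yy$, and then invoke Clarke's Theorem~\ref{thm:clarke} at every point of $Y$. The one genuinely new difficulty is that, with only {\upshape(PM$_0$)} available, the argument behind Lemma~\ref{lem:cdiagdom} yields merely the \emph{weak} diagonal dominance $a_{rr}-\sum_{j\ne r}|a_{jr}|=\mu_{nr}-\mu_{0r}\ge 0$, which by itself does not force invertibility (and, worse, a limit of such matrices may be singular). The two extra ingredients repairing the proof are the strict concavity of the kernels and the separation statement of Lemma~\ref{lem:ZqwithJcond2}. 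Assume throughout, without loss of generality, that $J$ satisfies \eqref{eq:J0} (the case \eqref{eq:J1} is symmetric, using the analogous assertion of Lemma~\ref{lem:ZqwithJcond2} for $I_n$).

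\emph{Choice of the neighbourhood.} Fix $q>0$. By Proposition~\ref{prop:Lipschitz} the $m_j$ are Lipschitz, so $\diff$ is locally Lipschitz, on an open set containing $\yy$, hence differentiable almost everywhere there by Rademacher's theorem. Applying Lemma~\ref{lem:Zq} at $\yy$ furnishes $\eta>0$ and \emph{fixed} compact sets $Z_j:=Z_j(\yy,q)\subseteq\rint I_j(\yy)$ $(j=0,\dots,n)$ such that, for every $\xx$ with $\Vert\xx-\yy\Vert\le\eta$, one has $Z_j\subseteq I_j(\xx)$, $\operatorname{dist}(Z_j,\{x_1,\dots,x_n\})\ge\eta$, $m_j(\xx)=\sup_{Z_j}F(\xx,\cdot)$ and $F(\xx,\cdot)\ge m_j(\xx)-q$ on $Z_j$. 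By Lemma~\ref{lem:ZqwithJcond2} (this is the only place where \eqref{eq:J0} is used) we may shrink $\eta$ so that, in addition, $\operatorname{dist}(Z_0,0)\ge\eta$; then $\alpha:=\inf Z_0\ge\eta>0$, while $\beta:=\sup Z_n\le1$, so $\beta-\alpha\le1-\eta<1$. Finally set $B:=\{\xx:\Vert\xx-\yy\Vert<\delta\}$ with $0<\delta<\min(\eta,\tfrac12\min(y_1,1-y_n))$; for $\xx\in B$ all $x_r$ then stay in a fixed compact subinterval of $(0,1)$.

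\emph{The Jacobian on $B$.} Let $\xx\in B$ be a point of differentiability of $\diff$. Since $m_j(\xx')=\sup_{Z_j}F(\xx',\cdot)$ for all $\xx'$ near $\xx$, the computation in the proof of Lemma~\ref{lem:Demyanov-formula} runs unchanged with base point $\xx$ and the fixed sets $Z_j$ (both $Z_j$ and $Z_j(\xx,q)$ satisfy the conclusions of Lemma~\ref{lem:Zq} at $\xx$), giving $\diff'(\xx)=[-a_{jr}]$ with $a_{jr}=\mu_{jr}-\mu_{(j-1)r}$ and $\mu_{jr}\in[D_-K_r(\sup Z_j-x_r),\,D_-K_r(\inf Z_j-x_r)]$; in particular $\mu_{0r}\le D_-K_r(\alpha-x_r)$ and $\mu_{nr}\ge D_-K_r(\beta-x_r)$. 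Exactly as in the proof of Lemma~\ref{lem:cdiagdom}, concavity gives $a_{jr}\le0$ for $j\ne r$ and the telescoping identity $a_{rr}-\sum_{j\ne r}|a_{jr}|=\mu_{nr}-\mu_{0r}$. The key estimate is now: since $\beta-x_r\in(0,1)$, condition {\upshape(PM$_0$)} gives $D_-K_r(\beta-x_r)\ge D_-K_r(\beta-x_r-1)$, so
\[
\mu_{nr}-\mu_{0r}\ \ge\ D_-K_r(\beta-x_r-1)-D_-K_r(\alpha-x_r),
\]
where $\beta-x_r-1<\alpha-x_r$ lie in a fixed compact subinterval of $(-1,0)$ (uniformly over $\xx\in B$ and $r$, because of the margins built into the $Z_j$ and into $B$) and have gap $(\alpha-x_r)-(\beta-x_r-1)=1-(\beta-\alpha)\ge\eta$. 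Because each $K_r$ is \emph{strictly} concave, $D_-K_r$ is strictly decreasing there; partitioning that interval into finitely many pieces of length $\le\eta/2$, every gap of length $\ge\eta$ contains a whole piece, and monotonicity then yields a constant $\varepsilon>0$ — independent of $\xx\in B$ and of $r$ — with $\mu_{nr}-\mu_{0r}\ge\varepsilon$.

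\emph{Conclusion.} Hence $-\diff'(\xx)\in\DD_\varepsilon(\RR^{n\times n})$ at every differentiability point $\xx\in B$. By Lemma~\ref{lem:cdomconv} the set $\DD_\varepsilon(\RR^{n\times n})$ is closed, convex and contained in $\GL_n(\RR)$; therefore every limit matrix $\lim_k\diff'(\xx_k)$ with $\xx_k\to\xx_0\in B$, and every convex combination of such limits, also lies in $-\DD_\varepsilon(\RR^{n\times n})$, i.e.\ $\Jff(\xx_0)$ has full rank for every $\xx_0\in B$. Clarke's Theorem~\ref{thm:clarke} then shows that $\diff$ is a bi-Lipschitz homeomorphism near each point of $B$; since $\yy\in Y$ was arbitrary, $\diff:Y\to\RR^n$ is a locally bi-Lipschitz local homeomorphism. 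The genuine obstacle I anticipate is exactly the \emph{uniform} positive lower bound $\varepsilon$ for $\mu_{nr}-\mu_{0r}$: this is where both hypotheses are indispensable — Lemma~\ref{lem:ZqwithJcond2} (hence \eqref{eq:J0} or \eqref{eq:J1}) is needed to force $\beta-\alpha<1$, and strict concavity is needed to convert that strict inequality into a quantitative, neighbourhood-uniform gap in the one-sided derivatives; dropping either would leave singular matrices inside the Clarke derivative.
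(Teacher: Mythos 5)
Your proposal is correct and follows essentially the same route as the paper's proof: fix the sets $Z_j(\yy,q)$ at $\yy$, invoke Lemma \ref{lem:ZqwithJcond2} to separate $Z_0$ from $0$ (so that $\sup Z_n-\inf Z_0\le 1-\eta$), upgrade the weak diagonal dominance coming from {\upshape(PM$_0$)} to diagonal $\varepsilon$-dominance uniformly on $B$ via strict concavity, and conclude with Lemma \ref{lem:cdomconv} and Clarke's Theorem \ref{thm:clarke}. The only (cosmetic) difference is how the uniform constant is extracted: the paper takes $c=\inf\{D_+K_r(u-\delta)-D_-K_r(u): u\in[\delta,1]\}$ and uses lower semicontinuity on a compact set, whereas you get $\varepsilon$ by a partition argument for the strictly decreasing $D_-K_r$ on a compact subinterval of $(-1,0)$; both are valid.
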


\begin{proof}
	Without loss of generality we may suppose that $J$ satisfies \eqref{eq:J0}.
	 We shall exhibit a $\delta>0$ and a $c>0$ such that for each point $\xx \in B:=\{\xx:\Vert \xx-\yy\Vert <\delta\}$ at which $\diff$ is differentiable--and almost every $\xx\ \in B$ is such--the Jacobian $\diff'(\xx)$ is diagonally $c$-dominant. That $\Jff(\xx)$ is non-singular for each $\xx\in B$ follows then from Lemma \ref{lem:cdomconv}, and thus the last statement is a consequence of Clarke's Theorem \ref{thm:clarke}.

\medskip\noindent For $j\in \{0,\dots, n\}$ let $\eta_j>0$ and $Z_j(\yy,1)$ be as furnished by Lemma \ref{lem:Zq} and for $j=0$ take $\eta'>0$ as yielded by Lemma \ref{lem:ZqwithJcond2}. Set $\delta=\min\{\eta_0,\dots, \eta_n,\eta'\}$ and note that for $Z:=Z_0(\yy,1)\cup Z_1(\yy,1)\dots\cup Z_n(\yy,1)$ we have $Z\subset [\de,1]$.

Set
\begin{equation*}
c:=\inf\{D_+K_r(u-\delta)-D_-K_r(u):r=1,\dots,n,\:u\in [\delta,1]\}.
\end{equation*}
By concavity, $c\ge 0$. However, we need here a little more: we claim that in fact $c>0$. Since $K_1,\dots,K_n$ are strictly concave we have $D_+K_r(u-\delta)>D_-K_r(u)$ for every $u\in [\delta,1]$. It follows that $D_+K_r(u-\delta)-D_-K_r(u)>0$ for $u\in[\delta,1]$, and since this function is lower semicontinuous, it has a minimum on $[\delta,1]$. We conclude that indeed $c>0$.

By Proposition \ref{prop:Lipschitz} $\diff$ is locally Lipschitz, so the difference function $\diff$ is differentiable at almost every point in $B=\{\xx:\Vert \xx-\yy\Vert <\delta\}$. By Lemma \ref{lem:Demyanov-formula} at these points the Jacobian is given by the negative of the matrix
\begin{equation*}
A(\xx)=[a_{jr}]_{j,r=1}^n, \qquad a_{jr}:=\mu_{jr} - \mu_{(j-1)r} \quad (r,j=1,\dots,n)
\end{equation*}
for some suitable
\begin{equation*}
\mu_{jr}(\xx)\in [D_-K_r(t^*_{j}-x_r), D_-K_r(t_{*j}-x_r)]\quad \text{with}\quad t_{*j} =\inf Z_j(\yy,1),\quad t^*_{j}=\sup Z_j(\yy,1).
\end{equation*}
We show now that $A(\xx)$ is diagonally $c$-dominant for each $\xx\in B$, and this will finish the entire proof. That the off diagonal entries $a_{jr}$ ($j\neq r)$ are non-positive can be shown with verbatim the same proof as inequality \eqref{eq:offdiagonal} in Lemma \ref{lem:cdiagdom}.

	\medskip Next we prove the inequality
	\begin{equation}\label{eq:rowsum2}
	\mu_{nr}-\mu_{0r} \ge c \qquad (r=1,\ldots,n).
	\end{equation}
Since $Z\subset [\de,1]$, we have $0<t_{n}^*-t_{*0}\leq 1-\delta$, and hence $(t_{n}^*-x_r) +(x_r-t_{*0}) =t_{n}^*-t_{*0}\leq 1-\delta$, so that $0\leq t_{n}^*-x_r\leq t_{*0}-x_r+1-\delta$ and $u:=t_{*0}-x_r+1 \in [\de,1]$.
Taking into account concavity, the definition of $c$ and the condition {\upshape (PM$_0$)} we conclude that
	\begin{align*}
	\mu_{nr}-\mu_{0r}&\geq D_-K_r(t_{n}^*-x_r)-D_-K_r(t_{*0}-x_r)\\
  & \geq D_+K_r(t_{*0}-x_r+1-\delta)-D_-K_r(t_{*0}-x_r)\\
	&= D_+K_r(t_{*0}-x_r+1-\delta)-D_-K_r(t_{*0}-x_r+1)\\
  & \qquad +D_-K_r(t_{*0}-x_r+1)-D_-K_r(t_{*0}-x_r)\\
	&\geq c+ 0 = c,
	\end{align*}
	yielding \eqref{eq:rowsum2}.
	
	That $A=A(\xx)$ is diagonally $c$-dominant follows by a telescopic summation from the inequalities $a_{jr}\leq 0$ ($j\neq r)$ and \eqref{eq:rowsum2} as in the proof of Lemma \ref{lem:cdiagdom}.
\end{proof}

\begin{proof}[Proof of Theorem \ref{thm:periodic}]
Analogously to the proof of Theorem \ref{thm:homeo3} the assertion follows from Lemma \ref{lem:PM0local} the properness of $\diff$ and the 
pathwise connectedness 
of the open set $Y$.
\end{proof}
Up to now we assumed that either \eqref{eq:J0} or \eqref{eq:J1} holds. This is kind of natural, for the kernels themselves were assumed to be singular, too. Indeed, in the next section we will give examples showing that singularity of kernels is not dispensable in this theorem. However, for the field $J$ we can get away with a somewhat less restrictive condition, which actually was central already in the study of Fenton in \cite{Fenton}. There minimax and equioscillation type results, a topic which we will discuss in our companion paper \cite{C}, were addressed. To prove his results, he used a certain more general ``cusp condition''. For Fenton's minimax results that was enough, but in that generality a global homeomorphism as above simply fails to hold (cf. Section \ref{sec:examples}). Interestingly, if we keep that the kernels be singular, then for the field itself it still suffices to suppose one of the below cusp conditions:
\begin{equation}\label{eq:Jinftyprime+}\tag{$\infty'_+$}
\lim_{t\downto 0, J(t) \ne -\infty} \inf_{0\le s<t, J(s)\ne -\infty} \frac{J(t)-J(s)}{t-s} = + \infty,
\end{equation}
\begin{equation}\label{eq:Jinftyprime-}\tag{$\infty'_-$}
\lim_{t \upto 1, J(t) \ne -\infty} \sup_{t<s\le 1, J(s)\ne -\infty} \frac{J(t)-J(s)}{t-s} = - \infty.
\end{equation}
We remark that $J$ is assumed to be bounded from above and that in case $J(s)=-\infty$ already the considered quotients in the conditions are automatically $+\infty$, resp.{} $-\infty$, whence in the infimum resp.{} supremum in the conditions \eqref{eq:Jinftyprime+} and \eqref{eq:Jinftyprime-} we can keep or drop the restrictions $J(s) \ne -\infty$ at will. Further, if $J\vert_{[0,\delta_0]}\equiv -\infty$ or $J\vert_{[1-\de_0,1]}\equiv -\infty$ with some $\de_0>0$---in which case the respective fractions in the limits would have in their numerator $-\infty-(-\infty)$, not defined at all---then the conditions are formulated for an empty set, in which case they are considered as satisfied.

These conditions mean in particular that in case $J$ is differentiable (at least from one side), then the one-sided derivative has $\pm\infty$ limit. It is clear that \emph{for concave functions} this condition is less restrictive than assuming either \eqref{eq:J0} or \eqref{eq:J1} (for a concave function one-sided derivatives exist and are monotone, so that they have limits, and with infinite function limit the derivative limit cannot be finite). However, it is easy to find examples of general (upper bounded) field functions satisfying both \eqref{eq:J0} and \eqref{eq:J1}, but neither \eqref{eq:Jinftyprime+} nor \eqref{eq:Jinftyprime-}, as well as functions which satisfy both the half-cusp conditions \eqref{eq:Jinftyprime+} and \eqref{eq:Jinftyprime-}, but neither \eqref{eq:J0} nor \eqref{eq:J1}. That is the reason why we formulate this cuspidal version in a separate theorem.

We will show in Example \ref{examp:torusnotinj} below that fully dropping any such type condition ruins validity of the homeomorphism theorem---so it is natural to assume this 
cusp condition of Fenton. 
\begin{theorem}\label{thm:Jinftyprime}
Let $K_1,\dots, K_n$ be singular, strictly concave kernel functions fulfilling condition {\upshape (PM$_0$)} and let $J$ be a field function satisfying either \eqref{eq:Jinftyprime-} or \eqref{eq:Jinftyprime+}.
Then the difference function $\diff$ is a locally bi-Lipschitz homeomorphism between $Y$ and $\RR^n$.
\end{theorem}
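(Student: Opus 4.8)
The plan is to mimic the proof of Theorem \ref{thm:periodic} line by line, replacing the two appearances of the strong endpoint singularity \eqref{eq:J0}/\eqref{eq:J1} by the weaker cusp conditions \eqref{eq:Jinftyprime+}/\eqref{eq:Jinftyprime-}, and checking that every lemma used still goes through. Concretely, the only two places where \eqref{eq:J0} (resp.\ \eqref{eq:J1}) entered the argument of Theorem \ref{thm:periodic} were: first, Lemma \ref{lem:ZqwithJcond2}, which forced the set $Z_0(\yy,q)$ (resp.\ $Z_n(\yy,q)$) to stay at distance at least $\eta'>0$ from the endpoint $0$ (resp.\ $1$); and second, the proof of Lemma \ref{lem:PM0local}, which used that separation, together with strict concavity and \eqref{cond:PM} with $c=0$, to manufacture a genuinely positive diagonal-dominance constant. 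Everything else --- properness of $\diff$ (Proposition \ref{prop:proper}), pathwise connectedness of $Y$ (Proposition \ref{prop:Yconn}), local Lipschitz continuity of $\mv$ (Proposition \ref{prop:Lipschitz}), the Demyanov-type derivative bounds (Lemma \ref{lem:Demyanov-formula}), Clarke's inverse function theorem (Theorem \ref{thm:clarke}) and Ho's globalization (Theorem \ref{thm:ho}) --- is stated for an \emph{arbitrary} $n$-field function $J$ and is untouched. So the real content is a substitute for Lemma \ref{lem:ZqwithJcond2}.

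First I would prove the following replacement: if $K_1,\dots,K_n$ are singular and $J$ satisfies \eqref{eq:Jinftyprime+}, then for each $q>0$ and each $\yy\in\oS$ with $m_0(\yy)\neq-\infty$, the set $Z_0(\yy,q)$ from Lemma \ref{lem:Zq} is bounded away from $0$; moreover the separation is locally uniform, i.e.\ there is $\eta'\in(0,\eta)$ so that $Z_0(\yy,q)$ has distance at least $\eta'$ from $x_0=0$ for all $\xx$ near $\yy$. The point is that on $Z_0(\yy,q)$ one has $F(\yy,t)\ge m_0(\yy)-q$, hence $J(t)\ge m_0(\yy)-q-\sum_{j=1}^n\sup K_j =: c_0 > -\infty$, so in particular $J(t)\ne-\infty$ there. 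Now pick any one point $u_0\in Z_0(\yy,q)$ with $J(u_0)\ge c_0$ (fixed, $>0$, say $u_0>0$ since $0\in Z_0$ would force $J(0)=-\infty$ only in the degenerate endpoint case which one handles separately). For any $t\in Z_0(\yy,q)$ with $t$ small, if also $t<u_0$ with $J(t)\ne-\infty$, then the difference quotient $\bigl(J(u_0)-J(t)\bigr)/(u_0-t)$ is one of the quantities over which the infimum in \eqref{eq:Jinftyprime+} is taken (with the roles of $s=t$, $t=u_0$), so it is bounded below by the quantity whose $\limsup$ as the upper argument tends to $0$ is $+\infty$; turning this around, $J(t)$ would have to be $\le c_0 - (\text{something} \to +\infty)\cdot(u_0-t)$ as $t\downto 0$, contradicting $J(t)\ge c_0$ once $t$ is small enough. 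Hence there is $\eta'>0$ with $Z_0(\yy,q)\cap[0,\eta']=\emptyset$. The local uniformity in $\xx$ then comes from the nesting/monotonicity properties of the sets $Z_0(\cdot,q)$ together with continuity of $m_0$ (Lemma \ref{lem:mjcont2}), exactly as the $\eta'$ in Lemma \ref{lem:ZqwithJcond2} was extracted; one only has to carry the constant $c_0$ along with an $\varepsilon$-of-room coming from $|m_0(\xx)-m_0(\yy)|$ small.

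With this substitute in hand, the proof of the analogue of Lemma \ref{lem:PM0local} is \emph{verbatim} that of Lemma \ref{lem:PM0local}: one only used that $Z_0(\yy,1)$ lies in $[\delta,1]$ for $\delta$ small, and then the definition $c:=\inf\{D_+K_r(u-\delta)-D_-K_r(u):r=1,\dots,n,\ u\in[\delta,1]\}$, which is $>0$ by strict concavity and lower semicontinuity, and the telescoping argument using (PM$_0$) to get the row-sum inequality $\mu_{nr}-\mu_{0r}\ge c$. So $\Jff(\xx)$ has full rank on a neighbourhood of $\yy$, and by Clarke's Theorem \ref{thm:clarke} the difference function $\diff$ is a locally bi-Lipschitz local homeomorphism on $Y$. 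Finally, combining this with properness of $\diff:Y\to\RR^n$ (Proposition \ref{prop:proper}), pathwise connectedness of $Y$ (Proposition \ref{prop:Yconn}) and simple connectedness of $\RR^n$, Ho's Theorem \ref{thm:ho} upgrades it to a global homeomorphism onto $\RR^n$, finishing the proof exactly as in Theorem \ref{thm:periodic}.

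I expect the only genuinely delicate point to be the bookkeeping in the new version of Lemma \ref{lem:ZqwithJcond2}: the conditions \eqref{eq:Jinftyprime+}--\eqref{eq:Jinftyprime-} are somewhat awkwardly phrased (nested $\lim$ over $t\downto 0$ of an $\inf$ over $0\le s<t$), and one has to be careful with the edge cases flagged in the text --- when $J\equiv-\infty$ on a whole initial segment $[0,\delta_0]$ (then $Z_0(\yy,q)$ trivially avoids $[0,\delta_0]$ and there is nothing to prove), and when the relevant endpoint node degenerates ($y_1=0$, so $I_0$ shrinks to $\{0\}$ and $m_0(\yy)=-\infty$, excluded by hypothesis), and the freedom to keep or drop the restriction $J(s)\ne-\infty$. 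Once one fixes a concrete auxiliary point $u_0\in Z_0(\yy,q)$ with $J(u_0)$ finite and uses it as the ``upper'' argument, the estimate is a one-line contradiction; the rest is routine. Everything downstream is a direct transcription of the already-established machinery.
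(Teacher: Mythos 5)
Your overall architecture is exactly the paper's: everything reduces to finding a substitute for Lemma \ref{lem:ZqwithJcond2}, after which the rest is a transcription of the proof of Theorem \ref{thm:periodic}. But the substitute lemma you propose is false as stated, and the argument you give for it misuses the cusp condition. The claim ``for each $q>0$ the set $Z_0(\yy,q)$ is bounded away from $0$'' cannot hold: take $J(t)=\sqrt{t}$ near $0$ (which satisfies \eqref{eq:Jinftyprime+}, the difference quotients being $\ge 1/(2\sqrt{t})$) and singular kernels with $y_1>0$; then $F(\yy,\cdot)$ is continuous and finite at $t=0$, so as soon as $q>\tfrac43\bigl(m_0(\yy)-F(\yy,0)\bigr)$ the point $0$ itself lies in $Z_0(\yy,q)=\{t\in W:F(\yy,t)\ge m_0(\yy)-3q/4\}$. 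This is precisely why the paper's Lemma \ref{lem:ZqwithJinftyprime} asserts the separation only ``for a sufficiently small value of the parameter $q$'', and why the diagonal-dominance argument of Lemma \ref{lem:PM0local} must then be rerun with that one specific small $q$ in place of $q=1$ --- a point your proposal does not address.

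The mechanism of your separation argument is also unsound. You fix $u_0\in Z_0(\yy,q)$ and bound $\bigl(J(u_0)-J(t)\bigr)/(u_0-t)$ from below by the infimum in \eqref{eq:Jinftyprime+} taken \emph{at the fixed upper argument} $u_0$. But \eqref{eq:Jinftyprime+} only asserts that this infimum tends to $+\infty$ as the upper argument tends to $0$; at a fixed $u_0$ (which, moreover, you have not arranged to be small) it is just some fixed number, so no contradiction with $J(t)\ge c_0$ follows, and indeed in the $J(t)=\sqrt{t}$ example none exists. The correct use of the condition is the paper's: choose $\de'$ so small that $\bigl(F(\xx,t)-F(\xx,s)\bigr)/(t-s)>2$ for all $0\le s<t\le\de'$ with $J(t)\ne-\infty$ (combining \eqref{eq:Jinftyprime+} with the kernel bound \eqref{eq:ffraction}), fix one point $u\in(0,\de']$ with $J(u)>-\infty$, deduce $F(\xx,s)\le F(\xx,u)-2(u-s)\le m_0(\xx)-u$ for $s\le u/2$, and only then \emph{define} $q:=u$ and $\eta':=u/2$. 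With that repair (and the degenerate case $J\vert_{[0,\de_0]}\equiv-\infty$ handled separately, as you note) your plan goes through.
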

\begin{proof}
 Most of the arguments in the above proof of Theorem \ref{thm:periodic} remain valid without change also for the proof of this result. The only point which needs to be adjusted is Lemma \ref{lem:ZqwithJcond2}, where we needed \eqref{eq:J0} or \eqref{eq:J1}. The tailored version of this lemma is the following.
\begin{lemma}\label{lem:ZqwithJinftyprime}
Suppose that the kernel functions $K_1,\dots, K_n$ are singular and $J$ is a field function satisfying \eqref{eq:Jinftyprime+}.
For a given $\yy\in \oS$ with $m_0(\yy)\neq -\infty$ consider the sets $Z_0(\yy,q)$ and $\eta>0$ from Lemma \ref{lem:Zq}. Then for a sufficiently small value of the parameter $q>0$ there is $\eta'\in (0,\eta)$ such that for each $\xx\in \oS $ with $\Vert \xx-\yy\Vert \leq \eta'$ we have $Z_0(\yy,q) \subset [\eta',x_1-\eta]$.

The analogous assertion holds for $m_n$ and $Z_n(\yy,q)$ in case of \eqref{eq:Jinftyprime-}.
\end{lemma}
\begin{proof}
As in Lemma \ref{lem:ZqwithJcond2}, apart from the yield of Lemma \ref{lem:Zq} we need to prove that $Z_0(\yy,q)\subset [\eta',y_1-\eta]$. Note that here, contrary to Lemma \ref{lem:ZqwithJcond2}, we claim this to hold not for all given $q>0$, but only for sufficiently small values of $q$.

Obviously, $y_1>0$, because we assumed that $K_1$ is singular and that $m_0(\yy)>-\infty$. Take some $0<\de<y_1/3$. Then for each $\xx\in \oS $ with $\Vert \xx-\yy\Vert \leq\de$, also $2\de \le x_1\le x_2\le\dots\le x_n\le 1$ holds, so that for any $0<t\le \de$ we have $-x_i\le t-x_i \le -\de$ ($i=1,\ldots,n$). Therefore, taking into account the concavity of $K_i$ on $(-1,0)$, for any $t\ne s \in (0,\de]$ we obtain the inequality
\begin{equation*}
\frac{K_i(t-x_i)-K_i(s-x_i)}{t-s} \ge D_-K_i(-\de).
\end{equation*}
Putting $M:=\sum_{i=1}^n D_-K_i(-\de)$ we conclude
\begin{equation}\label{eq:ffraction}
\frac{f(\xx,t)-f(\xx,s)}{t-s} \ge M:=\sum_{i=1}^n D_-K_i(-\de) (>-\infty) \qquad (0<t\ne s \le \de).
\end{equation}
If there exists $\de_0$ such that $J\vert_{[0,\de_0]}\equiv -\infty$, then we can take $\eta':=\min(\de,\de_0)$; in this case we have $F(\xx,t)\equiv -\infty$ for all $0\le t \le \eta'$, whence $[0,\eta']$ is disjoint from any $Z_0(\xx,q)$ in view of (iii) of Lemma \ref{lem:Zq}, and the proof is finished (without restriction on $q$).

If, on the other hand, there does not exist such a $\de_0>0$, (i.e. $0$ is a right limit point of the set $(0,1)\setminus X_J$), then of course $F(\xx,t)$ can be finite only for $t\in [0,1]\setminus X_J$. Taking into account condition \eqref{eq:Jinftyprime+}, from \eqref{eq:ffraction} we conclude with an appropriately small $\de'\in(0,\de)$ that for all $\xx$ with $\Vert \xx-\yy\Vert \le \de'$
\begin{equation*}
\frac{F(\xx,t)-F(\xx,s)}{t-s} > 2, \qquad\textrm{whenever}\quad 0\le s < t \le \de', \quad\textrm{and} \quad
J(t)\ne -\infty.
\end{equation*}
Let now $u\in (0,\de']$ be any fixed point with $J(u)>-\infty$. Such a point $u$ exists because $0$ is a right limit point of the set $(0,1)\setminus X_J$. Note also that this choice of $u$ does not depend on $\xx$, but only on $J$; and once $J(u)>-\infty$, we also have that $F(\xx,u)>-\infty$ for all $\xx$ with $\Vert \xx-\yy\Vert \le \de'$, as then all singularities of $f(\xx,\cdot)$ lie to the right of $\de>\de'$. We find that for all points $ s\in (0,1)\setminus X_J\cap(0,u/2]$
\begin{equation*}
F(\xx,s) < F(\xx,u)-2(u-s) \le F(\xx,u) - u \le m_0(\xx)-u.
\end{equation*}
At this point we can take $q:=u$ and $\eta':=u/2$, and also take the set $Z_0(\yy,q)$ and $\eta>0$ as furnished by Lemma \ref{lem:Zq}. For $s \in [0,\eta']=[0,u/2]$ we have $F(\xx,s)< m_0(\xx)-q$ (including of course also the possibility that $F(\xx,s)=J(s)=-\infty$), so $s \not\in Z_0(\yy,q)$ in view of (iii) of Lemma \ref{lem:Zq}. That is, $Z_0(\yy,q)\subseteq [\eta',x_1-\eta]$. This is what we were to show.

The reader will have no difficulty in checking the analogous details of the case of the other endpoint 1 and the condition \eqref{eq:Jinftyprime-}.
The proof of the lemma is thus finished.
\end{proof}

From here, we can argue as above in the proof of Theorem \ref{thm:periodic}, using the above found value of $q>0$ throughout. The proof of Theorem \ref{thm:Jinftyprime} is thus complete.
\end{proof}

\section{Examples and counterexamples}
\label{sec:examples}
In this section we present some examples showing the generality of the obtained homeomorphism theorems and the necessity of various conditions on the kernel functions and the field functions in order that the homeomorphism theorems hold (in general).
\begin{example}
\label{example:notcont}
We give an example when the difference function $\Phi$ is not smooth but a (locally bi-Lipschitz) homeomorphism. Set $n=1$,
\begin{equation*}
J(x):=\min\left(\log\vert10x\vert,0,\log\vert10(1-x)\vert\right)
\end{equation*}
and
\begin{equation*}
K(x):=\begin{cases}
\log\vert x\vert, & \text{ if } 0< x <2/3,
\\
\log\vert2(1-x)\vert, & \text{ if }
2/3\le x <1,
\end{cases}
\text{ and }
K(x):=K(x+1), \text{ if } -1<x<0.
\end{equation*}
First, we simplify $K(t-y)$
when $0<t<y$:
\begin{equation*}
K(t-y)=\begin{cases}
\log\vert1+t-y\vert, & \text{ if }
0<t<y-1/3 \text{ and } y<1, \\
\log\vert2(y-t)\vert, & \text{ if }
y-1/3<t<y \text{ and } 0<t \text{ and } y<1.
\end{cases}
\end{equation*}
Combining this ($t<y-1/3$ or $t>y-1/3$) with the three cases depending on $t$ (coming from the definition of $J(t)$, $0<t<1/10$, $1/10<t<9/10$, $9/10<t<1$),
there are altogether five cases
for $F(y,t)=J(t)+K(t-y)$:
\begin{equation*}
F(y,t)=
\begin{cases}
\log\vert10t(1+t-y)\vert, & \text{ if }
0<t<1/10 \text{ and } t+1/3<y<1,
\\
\log\vert20t(y-t)\vert, & \text{ if }
0<t<1/10 \text{ and } t<y<t+1/3,
\\
\log\vert1+t-y\vert, & \text{ if }
1/10<t<2/3 \text{ and } t+1/3<y<1,
\\
\log\vert2(y-t)\vert, & \text{ if }
1/10<t<9/10, \text{ } t<y<t+1/3 \text{ and } y<1,
\\
\log\vert20(1-t)(y-t)\vert, & \text{ if }
9/10<t<1 \text{ and } t < y < 1,
\end{cases}
\end{equation*}
but if we rearrange them according to $y$
and then to $t$, there are 13 cases, 
 which are listed below.

Hence, we write them in a compact form:
If $9/10<y<1$,
then $\exp F(y,t)$ consists of
four segments: $10t(1+t-y)$, $0<t<1/10$;
$1+t-y$, $1/10<t<y-1/3$;
$2(y-t)$, $y-1/3<t<9/10$;
$20(1-t)(y-t)$, $9/10<t<y$.
Verifying the monotonicity on these segments, we have $z_0(y)=y-1/3$ and $m_0(y)=\log(2/3)$ in this case.

If $1/3+1/10<y<9/10$, then
$\exp F(y,t)$ consists of
three segments: $10t(1+t-y)$, $0<t<1/10$;
$1+t-y$, $1/10<t<y-1/3$;
$2(y-t)$, $y-1/3<t<y$.
Verifying the monotonicity on these segments, we again have $z_0(y)=y-1/3$ and $m_0(y)=\log(2/3)$ in this case.

If $1/3<y<1/3+1/10$, then
$\exp F(y,t)$ consists of
three segments:
$10t(1+t-y)$, $0<t<y-1/3$;
$20t(y-t)$, $y-1/3<t<1/10$;
$2(y-t)$, $1/10<t<y$.
Again, with the help of monotonicity,
we obtain that $z_0(y)=1/10$
and $m_0(y)=\log\vert2(y-1/10)\vert$.

If $1/10<y<1/3$, then
$\exp F(y,t)$ consists of
two segments:
$20t (y-t)$, $0<t<1/10$;
$2(y-t)$, $1/10<t<y$.
Investigating these,
if $2/10<y<1/3$, then $z_0(y)=1/10$
and $m_0(y)=\log\vert2(y-1/10)\vert$.
If $1/10<y<2/10$, then $z_0(y)=y/2$
and $m_0(y)=\log\vert5y^2\vert$.

If $0<y<1/10$, then
$\exp F(y,t)$ consists of
only one segment, so
$F(y,t)=\log\vert20t (y-t)\vert$
and again $z_0(y)=y/2$
and $m_0(y)=\log\vert5y^2\vert$.

Summarizing these:

\begin{tabular}{ll}
 if $0<y\le 2/10$, &
 then
$z_0(y)= y/2$
and
$m_0(y)=\log\vert5y^2\vert$,
\\
if $2/10<y\le 1/3+1/10$,
 & then
$z_0(y)=1/10 $
and
$m_0(y)=\log\vert2(y-1/10)\vert$,
\\
if $1/3+1/10<y\le 1$,
& then
$z_0(y)=y-1/3 $
and
$m_0(y)=\log(2/3)$.
\end{tabular}

\smallskip

Consider $m_1(y)$, hence we assume that
$y<t<1$ now.
We simplify $K(t-y)$:
\begin{equation*}
K(t-y)=\begin{cases}
\log\vert t-y\vert, & \text{ if }
0<t<y+2/3, \\
\log\vert2(1+y-t)\vert, & \text{ if }
y+2/3<t<1.
\end{cases}
\end{equation*}
Again, there are five cases
for $F(y,t)=J(t)+K(t-y)$:
\begin{equation*}
F(y,t)=
\begin{cases}
\log\vert10t(t-y)\vert, & \text{ if }
0<t<1/10 \text{ and } 0<y<t,
\\
\log\vert t-y\vert, & \text{ if }
1/10<t<9/10 \text{, } 0<y \text{, } t-2/3<y<t,
\\
\log\vert2(1-t+y)\vert, & \text{ if }
2/3<t<9/10 \text{ and } 0<y<t-2/3,
\\
\log\vert20(1-t+y)(1-t)\vert, & \text{ if }
9/10<t<1 \text{ and } 0<y<t-2/3,
\\
\log\vert10(t-y)(1-t)\vert, & \text{ if }
9/10<t<1 \text{ and } t -2/3< y < 1.
\end{cases}
\end{equation*}
Similarly as above, if we rearrange them
according to $y$ and then to $t$,
there are 13 cases which we group
in a compact form again.

If $0<y<1/10$, then $\exp F(y,t)$
consists of four segments:
$10t(t-y)$, $y<t<1/10$;
$t-y$, $1/10<t<y+2/3$;
$2(1-t+y)$, $y+2/3<t<9/10$;
$20(1-t+y)(1-t)$, $9/10<t<1$.
Verifying monotonicity, we can see that
$z_1(y)=y+2/3$ and $m_1(y)=\log\vert2/3\vert$.

If $1/10<y<1/3-1/10$, then $\exp F(y,t)$
consists of three segments:
$t-y$, $y<t<y+2/3$;
$2(1-t+y)$, $y+2/3<t<9/10$;
$20(1-t+y)(1-t)$, $9/10<t<1$.
In this case, we have again that
$z_1(y)=y+2/3$ and $m_1(y)=\log\vert2/3\vert$.

If $1/3-1/10<y<1/3$, then $\exp F(y,t)$
consists of three segments:
$t-y$, $y<t<9/10$;
$10(t-y)(1-t)$, $9/10<t<y+2/3$;
$20(1-t+y)(1-t)$, $y+2/3<t<1$.
In this case, we have $z_1(y)=9/10$
and $m_1(y)=\log\vert9/10-y\vert$.

If $1/3<y<9/10$, then $\exp F(y,t)$
consists of two segments:
$t-y$, $y<t<9/10$;
$10(t-y)(1-t)$, $9/10<t<1$.
Now we have two subcases:
if $1/3<y<8/10$, then $z_1(y)=9/10$
and $m_1(y)=\log\vert9/10-y\vert$.
If $8/10<y<9/10$, then $z_1(y)=(1+y)/2$
and $m_1(y)=\log\vert5(1-y)^2/2\vert$.

If $9/10<y<1$, then $\exp F(y,t)$
consists of one segment:
$10(t-y)(1-t)$, $y<t<1$.
In this case we have again 
$z_1(y)=(1+y)/2$ 
and $m_1(y)=\log\vert5(1-y)^2/2\vert$.

\noindent Summarizing these:

\noindent \begin{tabular}{ll}
 if $0< y<1/3-1/10$, &
 then
$z_1(y)= y+2/3$
and
$m_1(y)=\log(2/3)$,
\\
if $1/3-1/10\le y<8/10$,
 & then
$z_1(y)=9/10$
and
$m_1(y)=\log\vert9/10-y\vert$,
\\
if $8/10 \le y< 1$,
& then
$z_1(y)= (y+1)/2$
and
$m_1(y)= \log\vert5(1-y)^2/2\vert$.
\end{tabular}

\smallskip\noindent So we can see that $m_1(y)$ is not smooth at $y=1/3-1/10$ while $m_0(y)$ is.
Hence $\Phi(y)=m_1(y)-m_0(y)$ is not smooth.
\end{example}

\begin{example}\label{example:non-cont}
In this example we show that without the
singularity condition \eqref{cond:infty} on the kernel function, the difference function $\Phi$ is neither continuous (so that in particular this singularity condition cannot be dispensed with in Lemma \ref{lem:mjcont2}) nor surjective.

\medskip
Let $n=1$ and set 
\begin{gather}
J(t):=
\begin{cases}
0, & \text{ if } 0\le t<1/2, \\
1, & \text{ if } 1/2\le t\le 1,
\end{cases}
\quad \text{ and }
K(t):= \sqrt{\vert t\vert}.
\end{gather}
As above, $F(y,t)=J(t)+K(t-y)$
and using monotonicity,
the local maxima are at
\begin{gather*}
z_0(y)=
\begin{cases}
0, & \text{ if }
0\le y<1/2, \\
1/2, & \text{ if }
1/2\le y\le 1,
\end{cases}
\quad \text{ and }
z_1(y)= 1.
\end{gather*}
Therefore we can write
\begin{gather*}
m_0(y)=\begin{cases}
\sqrt{y},  & \text{ if } 0\le y<1/2,
\\
\sqrt{y-1/2}+1, & \text{ if }
1/2\le y\le 1,
\end{cases}
\quad
\text{ and }
m_1(y)= 1 +\sqrt{1-y}.
\end{gather*}
Obviously, $m_1(y)-m_0(y)$ is discontinuous, it has a jump at $1/2$, see Figure \ref{fig:first-example}.
\begin{figure}[H]
\begin{center}
\includegraphics[keepaspectratio,width=0.5\textwidth]{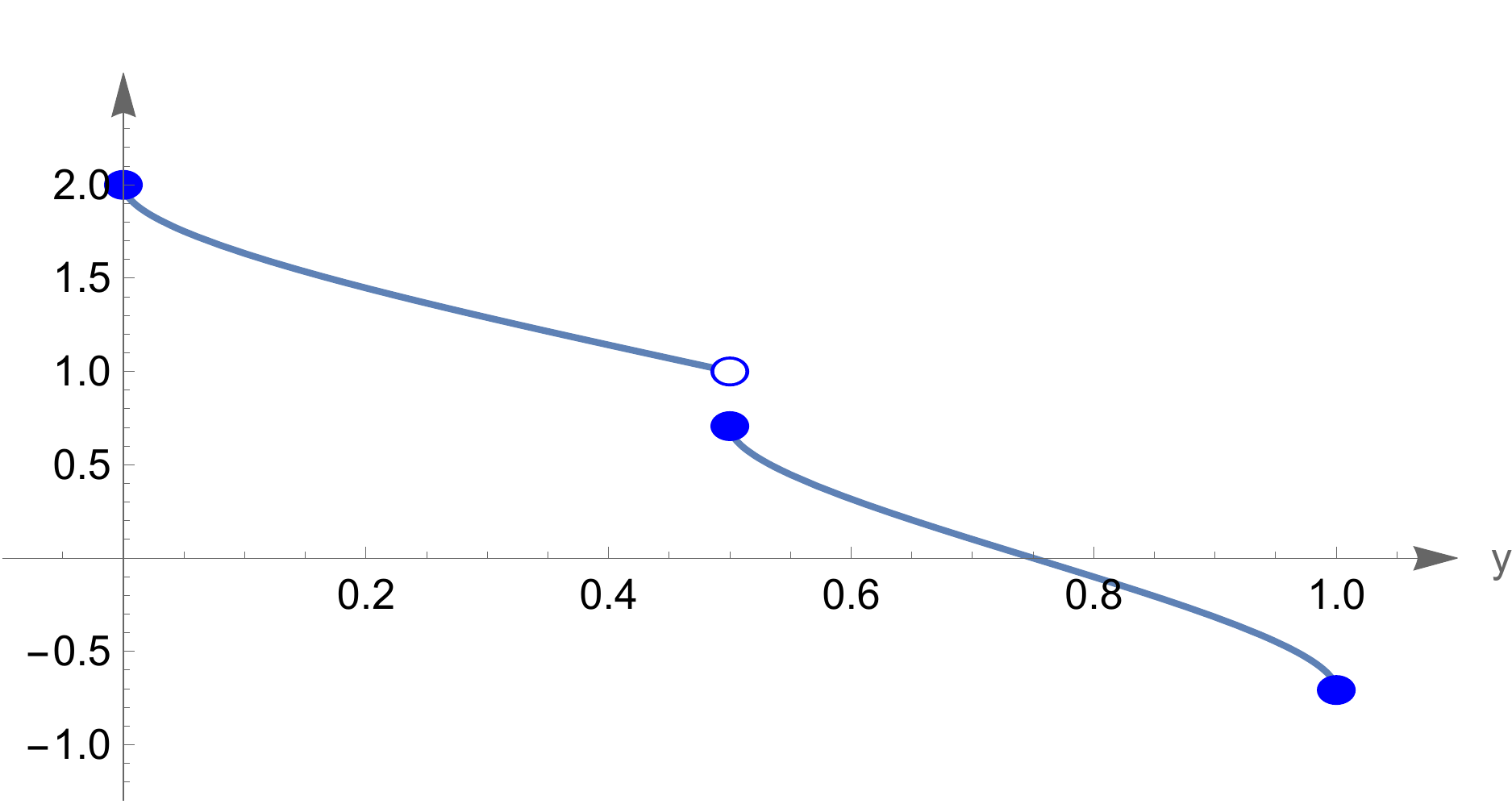}
\caption{The graph of $\Phi(y)=m_1(y)-m_0(y)$, $0\le y\le 1$, in Example \ref{example:non-cont}.}
\label{fig:first-example}
\end{center}
\end{figure}
\end{example}

\begin{example}\label{examp:torusnotinj}
In this example we show that the difference function $\Phi$ need not be injective even if the kernel $K$ is singular
and the field function $J$ is upper semicontinuous. This shows that some kind of monotonicity assumption (such as \eqref{cond:monotone}, \eqref{cond:PM}) is required to render the difference function a homeomorphism between $Y$ and $\bR^n$ (Theorem \ref{thm:homeo3}). We remark that the kernel function here is periodic, so at the same time we see that cusp type conditions on $J$ (such as \eqref{eq:J0}, \eqref{eq:Jinftyprime+} or variants) are needed for the validity of Theorems \ref{thm:periodic} or \ref{thm:Jinftyprime}. 

\medskip
Set $n=1$ and
\begin{gather*}
J(t):=\begin{cases}
1, & \text{ if } t=0,
\\
0, & \text{ if } 0<t\le 1,
\end{cases}
\quad \text{ and } \quad
K(t):=\frac{-1}{\vert t\vert(1-\vert t\vert)} \quad (0<\vert t\vert< 1).
\end{gather*}
A straighforward calculation shows that
if $0< y \le 1/2$,
then $F(y,t)=J(t)+K(t-y)$
is strictly decreasing on
$t\in[0,y)$
and if $1/2\le y< 1$, then $F(y,t)$
is strictly increasing on
$t\in (y,1]$.
Also, if $1/2< y\le 1$,
then $F(y,t)$ has exactly one strict local maximum on $t\in(0,y)$, namely at $t=y-1/2$.
We compare the values at $t=0$ and at $t=y-1/2$
\begin{gather*}
F(y,0)=1-\frac{1}{y(1-y)}
\quad \text{ and } \quad
F(y,y-1/2)= -\frac{1}{\frac{1}{2}(1-\frac{1}{2})}
= -4 
\end{gather*}
and they are equal if $y=(5+\sqrt{5})/10 \approx 0.7236$
(or $y=(5-\sqrt{5})/10 \approx 0.276$).
Hence we can determine the local maxima as
\begin{gather*}
z_0(y)=\begin{cases}
0, & \text{ if }
0\le y \le \frac{5+\sqrt{5}}{10},
\\
y-1/2, &\text{ if }
\frac{5+\sqrt{5}}{10}
< y \le 1 ,
\end{cases}
\ \text{ and }\
z_1(y)=
\begin{cases}
y+1/2, & \text{ if }
0\le y\le 1/2,
\\
1, & \text{ if }
1/2< y\le 1
\end{cases}
\end{gather*}
and
\begin{gather*}
m_0(y)=\begin{cases}
-\infty, &\text{ if }
y=0,
\\
1-\frac{1}{y(1-y)}, &\text{ if }
0<y\le \frac{5+\sqrt{5}}{10},
\\
-4
&\text{ if }
\frac{5+\sqrt{5}}{10} <y\le 1,
\end{cases}
\ \text{ }\
m_1(y)=\begin{cases}
-4, & \text{ if }
0\le y \le 1/2,
\\
- \frac{1}{y(1-y)}, & \text{ if }
1/2< y < 1,
\\
-\infty, & \text{ if }
y=1.
\end{cases}
\end{gather*}
Hence
\begin{equation*}
m_1(y)-m_0(y)= \begin{cases}
\infty, & \text{ if }
y=0,
\\
\frac{-1 + 5 y - 5 y^2}{y ( y-1) },
& \text{ if }
0<y<1/2,
\\
-1, & \text{ if }
1/2\le y \le \frac{5+\sqrt{5}}{10},
\\
\frac{1 - 4 y + 4 y^2}{y( y-1)},
& \text{ if }
\frac{5+\sqrt{5}}{10} < y < 1,
\\-\infty, & \text{ if }
y=1, 
\end{cases}
\end{equation*}
which shows that injectivity does not hold.
The two graphs in Figure \ref{fig:sec}
depict
the sum of translates function for different values of $y$
and the function $m_1(y)-m_0(y)$.

\begin{figure}[H]
\begin{center}
\begin{minipage}{0.45\textwidth}
\begin{center}
\includegraphics[keepaspectratio,width=\textwidth]{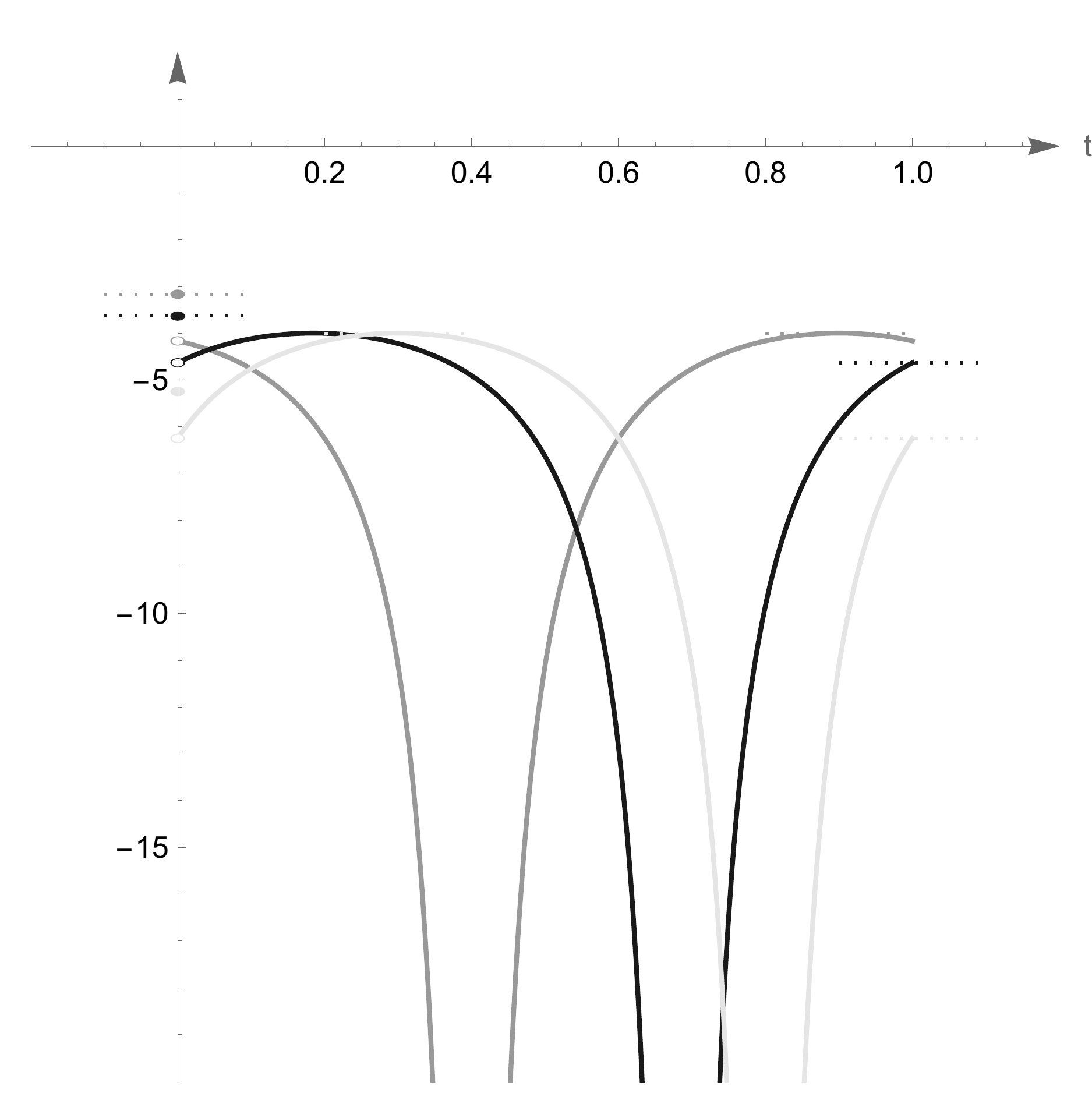}
\end{center}
\end{minipage}%
\begin{minipage}[c]{0.45\textwidth}
\begin{center}
\includegraphics[keepaspectratio,width=\textwidth]{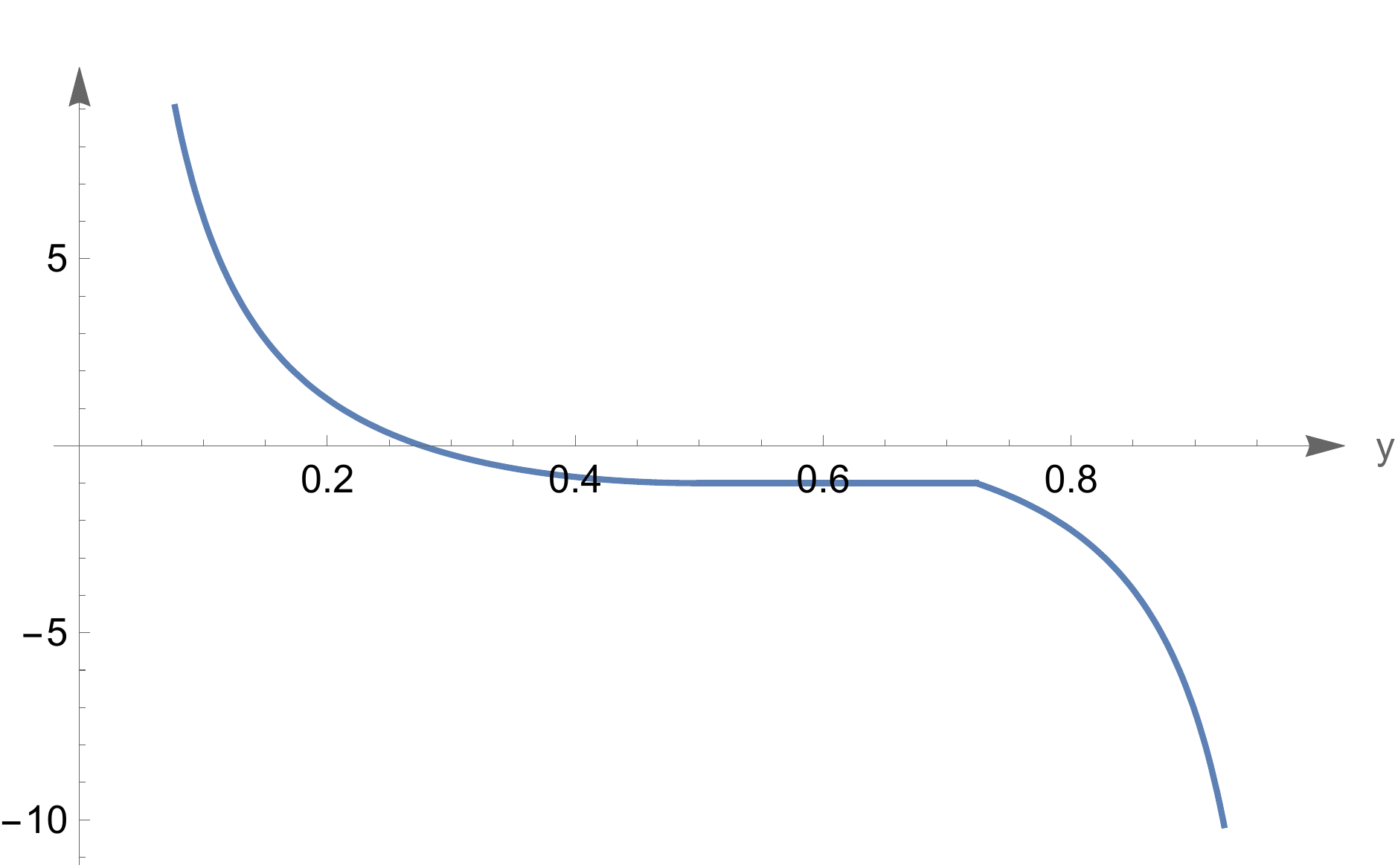}
\end{center}
\end{minipage}
\end{center}
\caption{Left: the graph of the sum of translates function $F(y,t)$ for $y=0.4$, $y=0.685$, and $y=0.8$.
Right: The graph of $\Phi(y)=m_1(y)-m_0(y)$ for $0\le y\le 1$ (Example \ref{examp:torusnotinj})}
	\label{fig:sec}

\end{figure}
\end{example}

\section{Applications for interpolation results}\label{sec:app}

Here we present a couple of immediate applications to Lagrange interpolation and moving node Hermite--Fej\'er interpolation results for various general systems. With these results we will also cover classical Lagrange interpolation and known results of Mycielski, Paszkowski, both for the original algebraic polynomial case (see \cite{MP}) and for the trigonometric polynomial cases (the latter being covered by Videnskii's general result \cite{Videnskii}). Also we will show a number of generalizations, in particular for weighted cases and for general ``product systems'', which were not yet explored.

In this work our original aim was to deal with weighted polynomials, which transform, after taking logarithms, to sums of translates with a field. Only along the way of relaxing the conditions on the field $J$ we realized that the absolutely minimal condition we must require is that the regularity set $Y$ is non-empty, which in turn requires that $J$ assumes finite values at least on (essentially) $n+1$ points. That was the fundamental step to arrive at getting results of interpolation theory nature. Although in the classical cases of algebraic polynomials these are easy and centuries old, one basic fact, exploited in ordinary Lagrange interpolation, is that we deal with systems of linear combinations of base functions, thus forming an $n$-dimensional vector space. Thus existence of Lagrange interpolation polynomials is kind of trivial due to this linearity, while uniqueness follows from the basic fact, the fundamental theorem of algebra, that a degree $n$ algebraic polynomial has (at most) $n$ roots.

All this stuff becomes less trivial when the linear structure is missing. Our generalization, however, covers different systems, which we may call ``product systems'': We assume that our functions are $n$-term products of base factors. This seems to be a tautology ($n$ roots vs.{} $n$-term products), but it is not. An enlightening example is provided by the ``Bojanov polynomials'' which are obtained from a prescription of a sequence of multiplicities $\nu_i$, $(i=1,\ldots,n)$, and require that the function (still a polynomial) in consideration has roots $0\le x_1\le\dots\le x_n\le 1$ with the prescribed multiplicities. Then the polynomial has degree $N:=\nu_1+\dots+\nu_n$, its general form being $p(t)=c\prod_{i=1}^n (t-x_i)^{\nu_i}$. Now, such polynomials \emph{do not form} a vector space, as linear combinations of them can easily have different root multiplicities\footnote{In fact, by induction it is easy to prove that linear combinations of all Bojanov polynomials span the whole ${\mathcal P}_N$.}, leading to the impossibility of representing them in the above prescribed product form; and also, their linear combinations or derivatives may well have more roots than $n$ (up to $N$). That a homeomorphism theorem still holds for such special Bojanov polynomials, is an important ingredient to prove approximation theoretical results for such systems.

Recall that in many basic approximation theory results crucial importance is attributed to the property that the system be so-called Chebyshev or Haar system: any linear combinations may have at most $n$ roots. Furthermore, to obtain general interpolation results (for systems formed from otherwise quite general base functions) it is often needed that \emph{also the derivatives} form a Chebyshev- or Haar system. That is the case with the classical general interpolation theory result of Videnskii, which we recall here.

\begin{theorem}[See \cite{Videnskii}, Teorema 2]\label{thm:vid}
Let $\varphi_k(x)$, $k=0,1,\ldots,n$ be continuously differentiable functions on the interval
$[a,b]$.
Assume that any function of the form
\begin{equation}
\label{eq:Videnskiipolynomial}
P(x)=\sum_{k=0}^n c_k \varphi_k(x)
\end{equation}
and $P'(x)$ has at most $n$ zeros in the interval $[a,b]$.
Let $v_0,v_1,\ldots,v_n$ be given and $i,r$ be two positive integers such
that $1\le i\le n-1$, $1\le r\le n-1$ and $i+r\le n$.
Assume that $\xi_0,\xi_1,\ldots,\xi_{i-1},\xi_{i+r},\ldots,\xi_n\in[a,b]$
are also given such that $a=\xi_0<\xi_1<\ldots \xi_{i-1}<\xi_{i+r}<\ldots<\xi_n=b$.
Then there exist uniquely $\xi_i,\ldots,\xi_{i+r-1}\in [a,b]$ such that
$\xi_{i-1}<\xi_i<\ldots<\xi_{i+r-1}<\xi_{i+r}$
and $P(x)$ of the form \eqref{eq:Videnskiipolynomial} such that
\begin{align*}
P(\xi_j)=&(-1)^j v_j,\quad j=0,1,\ldots,n,
\\
P'(\xi_k)=&0,\quad k=i,i+1,\ldots,i+r-1.
\end{align*}
\end{theorem}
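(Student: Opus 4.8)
The plan is to deduce Theorem \ref{thm:vid} from the homeomorphism theorem (Theorem \ref{thm:homeo3}, and in the periodic/trigonometric incarnation Theorem \ref{thm:periodic} or \ref{thm:Jinftyprime}), with the abstract case handled by re-running the \emph{scheme} of their proofs. The first, linear, step is to freeze the free nodes. Writing $\yy=(\xi_i,\dots,\xi_{i+r-1})$ and letting $\yy$ range over the open set $S:=\{\xi_{i-1}<\xi_i<\dots<\xi_{i+r-1}<\xi_{i+r}\}$ — after an affine change of variable an open $r$-dimensional simplex — I would first show that for every $\yy\in S$ the \emph{linear} problem ``$P(\xi_j)=(-1)^j v_j$ at the $n+1-r$ fixed nodes and $P'(\xi_k)=0$ at the $r$ free nodes'' has a unique solution $P_\yy$ in the span of $\varphi_0,\dots,\varphi_n$. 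Since this is an $(n{+}1)\times(n{+}1)$ linear system it suffices that its kernel be trivial: a nonzero $P$ would vanish at the $n+1-r$ fixed nodes, Rolle would give $n-r$ zeros of $P'$ interlacing them, and with the $r$ prescribed zeros of $P'$ this yields $n$ distinct zeros of $P'$ (distinctness following from the order of the nodes), contradicting the Chebyshev hypothesis on $P'$. The resulting map $\yy\mapsto P_\yy$ is continuous, in fact locally Lipschitz.

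The substance is to encode the remaining interpolation conditions as a \emph{difference function}. Define $\Psi\colon S\to\RR^r$ by $\Psi(\yy)_\ell:=(-1)^{i+\ell}P_\yy(\xi_{i+\ell})$ for $\ell=0,\dots,r-1$; solving Theorem \ref{thm:vid} for the prescribed data is exactly the assertion that $\Psi$ is a homeomorphism onto $\RR^r$ (or, under the relevant sign condition on the data, onto the appropriate open orthant), so that the target $((-1)^i v_i,\dots,(-1)^{i+r-1}v_{i+r-1})$ has a unique preimage $\yy^{*}$, and then $P_{\yy^{*}}$ together with the nodes $\xi^{*}_k:=(\yy^{*})_{k-i}$ is the unique interpolant. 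The order constraint in $S$ translates precisely into $\xi_{i-1}<\xi^{*}_i<\dots<\xi^{*}_{i+r-1}<\xi_{i+r}$.

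For the concrete cases — classical algebraic polynomials $P=c\prod(t-x_\ell)^{\nu_\ell}$, trigonometric polynomials, and more generally the ``product systems'' $P=c\prod\Omega_\ell(t-x_\ell)$ — this identification is made explicit. Passing to $\log|P_\yy(t)|$ one obtains a function of the form $J(t)+\sum_{\ell=0}^{r-1}K_\ell(t-\xi_{i+\ell})$, where $K_\ell=\log|\Omega_\ell|$ is a singular concave kernel (e.g.\ $K_\ell=\nu_\ell\log|\cdot|$) and the \emph{field} $J$ is the logarithm of the ``fixed part'' of $P_\yy$, whose $-\infty$-singularities sit at the fixed nodes — this is exactly why one wants an essentially arbitrary field $J$ in Theorem \ref{thm:homeo3}. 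The sign pattern $(-1)^j v_j$ forces $P_\yy$ to have $r{+}1$ roots in $(\xi_{i-1},\xi_{i+r})$ interlacing the free nodes, so that $\xi_{i+\ell}$ is precisely the maximum point of $|P_\yy|$ on the $\ell$-th of these $r{+}1$ subintervals; hence $\Psi(\yy)_\ell=\exp\bigl(m_\ell(\yy)\bigr)$ up to the fixed overall scale, and $\Psi$ is conjugate (via $\log$ and the scale normalisation) to the difference function $\diff$ of this sum of translates. Theorem \ref{thm:homeo3} — respectively Theorem \ref{thm:periodic}/\ref{thm:Jinftyprime} in the periodic case, after checking (PM$_0$) and a cusp condition on $J$ — then gives that $\diff$, hence $\Psi$, is a homeomorphism, and Theorem \ref{thm:vid} follows.

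In the genuinely abstract setting the kernels $\log|\varphi_k|$ need not be concave and there is no literal sum of translates, so I would instead re-run the proof scheme of Theorem \ref{thm:homeo3} for $\Psi$ directly: $S$ is pathwise connected (a simplex) and $\RR^r$ is simply connected, so by the Hadamard-type Theorem \ref{thm:ho} it suffices that $\Psi$ be a proper local homeomorphism. Properness (the analogue of Proposition \ref{prop:proper}): as $\yy\to\partial S$ two free nodes coalesce or a free node reaches $\xi_{i-1}$ or $\xi_{i+r}$, and a Rolle/zero-counting argument shows some coordinate of $\Psi(\yy)$ tends to $0$ or $\infty$. Local homeomorphism: by Rademacher and Clarke's inverse function theorem \ref{thm:clarke} it is enough that the Clarke derivative of $\Psi$ have full rank; differentiating $\Psi$ through a perturbation computation of $P_\yy(\xi_{i+\ell})$ in the free nodes (the abstract analogue of Lemma \ref{lem:Demyanov-formula}) produces a matrix whose off-diagonal entries have a fixed sign and whose rows telescope exactly as in Lemma \ref{lem:cdiagdom}, the strict gain coming from the Chebyshev hypothesis on $P'$; so the matrix is uniformly diagonally dominant, hence invertible. \textbf{The main obstacle} I expect is precisely this last point in the abstract case: establishing the sign pattern and the strict (uniform) diagonal dominance of the Jacobian of $\Psi$ \emph{without} the concavity of kernels that powers Lemma \ref{lem:cdiagdom}, extracting it instead from the extended-Chebyshev property of $\{\varphi_k\}$ via a careful interlacing analysis of $P_\yy$ and $P_\yy'$. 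A secondary technical nuisance is the sign/range bookkeeping — whether $\Psi$ maps onto all of $\RR^r$ or onto an orthant — i.e.\ pinning down the exact admissibility hypotheses on the data $v_j$ under which the moving nodes exist in the prescribed strict order.
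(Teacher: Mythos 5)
This theorem is not proved in the paper at all: it is Videnskii's classical result, quoted verbatim from \cite{Videnskii} purely as background, and the authors explicitly position their own interpolation theorems as concerning ``product systems'', which they stress are \emph{not} equivalent to the linear Chebyshev--Haar setting of Theorem \ref{thm:vid} (``linearity and multiplicativity of systems lead to different setups''). None of the paper's application theorems covers the mixed situation of Theorem \ref{thm:vid}, in which a consecutive block of nodes moves subject to $P'=0$ while the remaining nodes are fixed with prescribed values; Theorem \ref{thm:abstint} is pure Lagrange interpolation (all evaluation nodes fixed, the \emph{roots} move) and Theorem \ref{thm:abstrMP} is pure moving-node Hermite--Fej\'er (all roots move, no fixed evaluation nodes). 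So your proposal has to stand on its own, and as written it has genuine gaps.

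Concretely: (a) your kernel-triviality argument for the frozen linear system does not close. A nonzero $P$ in the kernel vanishes at the $n+1-r$ fixed nodes, Rolle gives $n-r$ zeros of $P'$, one in each gap, and the $r$ prescribed zeros of $P'$ all lie in the single gap $(\xi_{i-1},\xi_{i+r})$, which already contains one of the Rolle zeros --- possibly coinciding with a prescribed one. The guaranteed count of distinct zeros of $P'$ is therefore only $n-1$, and even the optimistic count $n$ does \emph{not} contradict the hypothesis that $P'$ has \emph{at most} $n$ zeros. Invertibility of the linear system is thus not established. (b) The identification of your map $\Psi$ with the paper's difference function $\diff$ silently swaps the roles of roots and critical points: in the paper the moving coordinates $y_k$ are the \emph{roots} of the product and the maximum points $z_j$ are outputs of the construction, whereas in Theorem \ref{thm:vid} the moving nodes are \emph{critical points} of $P$ and its roots are implicit. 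Your assertion that $P_\yy$ automatically has $r+1$ roots interlacing the free nodes, making each free node the maximum point of $\vert P_\yy\vert$ on a subinterval, does not follow from the defining conditions of $P_\yy$ and is exactly the kind of interlacing statement that needs proof. (c) For genuinely abstract systems $\{\varphi_k\}$ you concede that the sign pattern and uniform diagonal dominance of the Jacobian --- the analogue of Lemma \ref{lem:cdiagdom}, whose proof in the paper rests entirely on concavity of the kernels --- is ``the main obstacle''; that obstacle is the analytic substance of the theorem, so the proposal is a strategy outline rather than a proof.
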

We consider the special case $i=1$, $r=n-1$, when the assertion takes the form
\begin{align*}
P(\xi_j)=&(-1)^j v_j,\quad j=0,1,\ldots,n,
\\
P'(\xi_k)=&0,\quad k=1,\ldots,n.
\end{align*}

Now, in our approach, it is necessary that all functions under consideration have the ``sum of translates form'', i.e.{} taking exponentials, form some ``product systems''. One may say that in a way this is similar---but, as said, is not equivalent---to the root number conditions in Chebyshev--Haar systems. Also, if the kernels are strictly concave, and the field function $J$ is concave, too (which we have avoided to assume, by the way), then there are unique points $z_j$ of maxima on each interval $I_j(\xx)$, providing us a Chebyshev--Haar-type condition also for the derivatives (as $p'=0$ is essentially equivalent to $\exp(p)'=0$). However, as said above, these are slightly different things for linearity and multiplicativity of systems lead to different setups (recall the above Bojanov example). Therefore, we will obtain here a number of fairly general interpolation results, which are not available in the literature and seem not having an easy access through classical methods so developed for Chebyshev--Haar systems.

The field functions occurring here are assumed to be upper semicontinuous, and as a consequence the sum of translates function $F$ too has this property. It then follows that for each $j=0,1,\dots,n$ and $y\in S$ there is $z_j=z_j(\yy)\in I_j(\yy)$ with $m_j(\yy)=F(\yy,z_j)$, i.e., the supremum $m_j(\yy)$ is attained. As said above if $J$ is assumed to be concave and at least one of $J,K_1,\dots,K_n$ is strictly concave, then these $z_j$ exists \emph{uniquely}. These facts will be crucial in the following considerations.

\subsection{Abstract log-concave interpolation}\label{sec:Lagr}
We will consider log-concave functions $L:[0,1]\to [0,\infty)$. It is important to note that such functions can have a zero only at $0$ and at $1$.
\begin{theorem}\label{thm:abstint} Let $n\in\NN$, and let $L_1,\dots, L_n:[0,1]\to [0,\infty)$ be log-concave functions vanishing at $0$ and satisfying for some $ c>0$
	\begin{equation}\label{eq:logcbelow}
	\frac{d}{dt}\log \frac{L_j(t)}{L_j(1-t)}\geq c>0\quad\text{ for $j=1,\dots,n$ and for a.e.{} $t\in (0,1)$.}
	\end{equation}
	 For any choice of $0\le \xeta_0<\ldots<\xeta_{n}\le 1$ and $\alpha_0,\ldots,\alpha_{n}>0$ there is a unique $C>0$ and a unique system of points $y_1\leq y_2\leq \ldots\leq y_n$ with $\xeta_j<y_{j+1}<\xeta_{j+1}$ for each $j\in \{0,\dots,n-1\}$ such that for the function
\begin{equation*}
G(t):= C \prod_{j=1}^n L_j(\vert t-y_j\vert),
\end{equation*}
we have
\begin{equation*}
G(\xeta_j) = \alpha_j\quad \text{for $j=0,1,\ldots,n$.}
\end{equation*}
\end{theorem}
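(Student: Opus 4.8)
The plan is to transcribe the statement into the language of the paper and then invoke Theorem~\ref{thm:homeo3}. First I would set $K_j(s):=\log L_j(|s|)$ for $s\in(-1,0)\cup(0,1)$ and $j=1,\dots,n$. Each $K_j$ is concave on $(0,1)$ (log-concavity of $L_j$) and on $(-1,0)$ (composition of $\log L_j$ with $s\mapsto-s$); the two one-sided limits at $0$ coincide and equal $\lim_{t\downto0}\log L_j(t)=-\infty$, so $K_j$ is a singular kernel function. Since $K_j'(t)-K_j'(t-1)=\frac{d}{dt}\bigl(\log L_j(t)-\log L_j(1-t)\bigr)=\frac{d}{dt}\log\frac{L_j(t)}{L_j(1-t)}\ge c$ for a.e.\ $t\in(0,1)$, the kernels satisfy \eqref{cond:PM} with the given $c>0$. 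Next I encode the data into a field: put $J(\xeta_j):=0$ for $j=0,\dots,n$ and $J:=-\infty$ elsewhere on $[0,1]$. Unless $\xeta_0=0$ and $\xeta_n=1$ both occur, $J$ is an admissible $n$-field function (it is finite at $n$ or $n+1$ points of $(0,1)$, and when only at $n$ of them one of $\xeta_0=0,\xeta_n=1$ is itself a point of finiteness); the borderline case is postponed. Writing $f(\yy,t)=\sum_jK_j(t-y_j)$, $F=J+f$, and $G(t)=C\prod_jL_j(|t-y_j|)$, one has $\log G(t)=\log C+f(\yy,t)$.

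The key step is a combinatorial identification of the difference function on the regularity set $Y$ of this $J$. If $\yy\in Y$, then $F(\yy,\cdot)$ is finite on $[0,1]$ only at points of $\{\xeta_0,\dots,\xeta_n\}$ that avoid $\{y_1,\dots,y_n\}$, each of the $n+1$ intervals $I_j(\yy)$ must contain at least one such point, and — there being $n+1$ intervals and at most $n+1$ candidates, each belonging to a single interval and the two families being increasingly ordered — a pigeonhole argument forces $I_j(\yy)$ to contain precisely $\xeta_j$, with $\xeta_j\notin\{y_1,\dots,y_n\}$. Hence $\xeta_{j-1}<y_j<\xeta_j$ for $j=1,\dots,n$ (so that $\yy$ is ``properly placed'' and in particular $0<y_1<\cdots<y_n<1$) and $m_j(\yy)=F(\yy,\xeta_j)=f(\yy,\xeta_j)$. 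Conversely, every properly placed $\yy$ has each $\xeta_j-y_i\in(-1,0)\cup(0,1)$, whence $f(\yy,\xeta_j)\in\RR$ and $\yy\in Y$. Thus $Y$ is exactly the set of properly placed node systems, and on it $\diff(\yy)=\bigl(f(\yy,\xeta_1)-f(\yy,\xeta_0),\dots,f(\yy,\xeta_n)-f(\yy,\xeta_{n-1})\bigr)$.

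Now I would apply Theorem~\ref{thm:homeo3}: $\diff|_Y:Y\to\RR^n$ is a (locally bi-Lipschitz) homeomorphism. Let $\yy^{*}\in Y$ be the unique point with
\[
\diff(\yy^{*})=\Bigl(\log\tfrac{\alpha_1}{\alpha_0},\,\log\tfrac{\alpha_2}{\alpha_1},\,\dots,\,\log\tfrac{\alpha_n}{\alpha_{n-1}}\Bigr).
\]
Telescoping yields $f(\yy^{*},\xeta_j)-f(\yy^{*},\xeta_0)=\log\alpha_j-\log\alpha_0$ for all $j$; with $C:=\alpha_0\exp\bigl(-f(\yy^{*},\xeta_0)\bigr)\in(0,\infty)$ (finite because $m_0(\yy^{*})=f(\yy^{*},\xeta_0)\neq-\infty$) this gives $\log C+f(\yy^{*},\xeta_j)=\log\alpha_j$, i.e.\ $G(\xeta_j)=\alpha_j$, and $\xeta_{j-1}<y_j^{*}<\xeta_j$ as noted. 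For uniqueness, any pair $(\widetilde C,\widetilde\yy)$ with $\xeta_{j-1}<\widetilde y_j<\xeta_j$ and $\widetilde C\prod_iL_i(|\xeta_j-\widetilde y_i|)=\alpha_j$ for all $j$ has $\widetilde\yy$ properly placed, hence $\widetilde\yy\in Y$ with $m_j(\widetilde\yy)=f(\widetilde\yy,\xeta_j)$, so the interpolation equations read $f(\widetilde\yy,\xeta_j)-f(\widetilde\yy,\xeta_{j-1})=\log(\alpha_j/\alpha_{j-1})$; thus $\diff(\widetilde\yy)$ equals the displayed vector, $\widetilde\yy=\yy^{*}$ by injectivity of $\diff|_Y$, and then $\widetilde C=C$.

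Two points will require genuine care. The first is the pigeonhole/ordering step above, which simultaneously identifies $m_j(\yy)$ with $f(\yy,\xeta_j)$ and pins $Y$ down as the family of properly placed systems; this is exactly where the admissibility of $J$ — all $n+1$ candidate maximizers being ``in play'' — is used. The second is the degenerate case $\xeta_0=0$, $\xeta_n=1$, in which the field $J$ narrowly fails the counting condition (it is finite at only $n-1$ points of $(0,1)$). I would dispose of it by approximation — perturbing $\xeta_0$ and $\xeta_n$ slightly inward, solving the resulting non-degenerate problems, and passing to the limit using the properness of $\diff$ (Proposition~\ref{prop:proper}) together with the uniqueness already established — or by a direct argument; this case also uses that $L_j>0$ on $(0,1]$ (equivalently $K_j(\pm1)$ finite), without which surjectivity onto all prescribed ratios $\alpha_j/\alpha_{j-1}$, and hence the theorem, can fail already for $n=1$.
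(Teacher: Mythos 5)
Your proposal follows the paper's own route essentially verbatim: the same kernels $K_j=\log L_j(\vert\cdot\vert)$, the same discrete field $J$ equal to $0$ on $\{\xeta_0,\dots,\xeta_n\}$ and $-\infty$ elsewhere, the same identification of the regularity set $Y$ with the interlacing node systems together with $m_j(\yy)=F(\yy,\xeta_j)$ (this is exactly the content of Remark \ref{rem:absint}), and the same appeal to Theorem \ref{thm:homeo3} followed by telescoping to produce $\yy^*$ and $C$. That part is correct and complete.

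Where you go beyond the paper is the borderline case $\xeta_0=0$ and $\xeta_n=1$, and you are right to flag it: there $J$ is finite at $n-1$ interior points and at both endpoints, so its weighted count is $(n-1)+\tfrac12+\tfrac12=n$, which is \emph{not} more than $n$; hence $J$ is not an admissible $n$-field function and Theorem \ref{thm:homeo3} does not apply as stated. The paper's Remark \ref{rem:absint} silently assumes admissibility and never addresses this configuration. Your further suspicion that the conclusion can genuinely fail there without assuming $L_j>0$ on $(0,1]$ is also correct: for $n=1$ take $L(t)=t(1-t)e^{ct}$, which is log-concave, vanishes at $0$, and satisfies \eqref{eq:logcbelow} with constant $2c$, yet $L(y)/L(1-y)=e^{c(2y-1)}$ is bounded on $(0,1)$, so with $\xeta_0=0$, $\xeta_1=1$ not every ratio $\alpha_0/\alpha_1$ is attainable. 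So this is a defect of the theorem as stated (note that Theorem \ref{thm:abstint2} excludes precisely this configuration by requiring $\xeta_n-\xeta_0<1$), not a gap in your argument; your proposed repair by inward perturbation of $\xeta_0,\xeta_n$ would in any case require the extra hypothesis $L_j(1)>0$ to have a chance of working.
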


\begin{remark}[Sign restricted Lagrange interpolation]
	Let $n\in \NN$ and take $L_j(t):=L(t):=t$ for each $j=1,\dots,n$.
For $t\in (0,1)$ we have
	\begin{equation*}
	\frac{d}{dt}\log \frac{L(t)}{L(1-t)}=\frac 1t+\frac1{1-t}\geq 4.
	\end{equation*}
	Thus the previous theorem applies and we have that for every $0\le \xeta_0<\ldots<\xeta_{n}\le 1$ and $\alpha_0,\ldots,\alpha_{n}>0$ there is a unique $C>0$ and a unique system of points $y_1\leq y_2\leq \ldots\leq y_n$ with $\xeta_j<y_{j+1}<\xeta_{j+1}$ for each $j\in \{0,\dots,n-1\}$ such that for the polynomial
	\begin{equation*}
	p(t)=C(t-y_1)\cdots (t-y_n)
	\end{equation*}
	one has
	\begin{equation*}
	\vert p(\xeta_j)\vert=\alpha_j\quad \text{for each $j=0,\dots,n$.}
	\end{equation*}
	If we wind up also the signs, we can conclude
	\begin{equation*}
	p(\xeta_j)=(-1)^{n-j}\alpha_j\quad \text{for each $j=0,\dots,n$.}
	\end{equation*}
\end{remark}

\begin{remark}[Interpolation by generalized polynomials]
	Let $n\in \NN$ and take $L_j(t):=t^{\nu_j}$ with arbitrarily given positive reals $\nu_j>0$ for each $j=1,\dots,n$. Then we have for $t\in (0,1)$ that
	\begin{equation*}
	\frac{d}{dt}\log \frac{L_{j}(t)}{L_{j}(1-t)}\geq 4\nu_j.
	\end{equation*}
	Once again, by the previous theorem, we conclude that for every $0\le \xeta_0<\ldots<\xeta_{n}\le 1$ and $\alpha_0,\ldots,\alpha_{n}>0$ there is a unique $C>0$ and a a unique system of points $y_1\leq y_2\leq \ldots\leq y_n$ with $\xeta_j<y_{j+1}<\xeta_{j+1}$ for each $j\in \{0,\dots,n-1\}$ such that for the generalized polynomial
	\begin{equation*}
	q(t)=C\vert t-y_1\vert^{\nu_1}\cdots \vert t-y_n\vert^{\nu_n}
	\end{equation*}
	one has
	\begin{equation*}
	q(\xeta_j)=\alpha_j\quad \text{for each $j=0,\dots,n$.}
	\end{equation*}
	Moreover, if $\nu_1,\dots,\nu_n$ are all positive integers, then for the polynomial
	\begin{equation*}
	q(t)=C(t-y_1)^{\nu_1}\cdots (t-y_n)^{\nu_n}
	\end{equation*}
a brief sign-analysis yields
	\begin{equation*}
	p(\xeta_j)=(-1)^{\sum_{i={j+1}}^n\nu_{i}}\alpha_j\quad \text{for each $j=0,\dots,n$.}
	\end{equation*}
This may be called a ``Bojanov--Lagrange interpolation result''.
\end{remark}

As a preparation for the proof of Theorem \ref{thm:abstint} we make the following remark.

\begin{remark}\label{rem:absint}
	Let $0\le \xeta_0<\ldots<\xeta_{n}\le 1$
	be given, and let $J$ be an external field function
	such that $J(t)=-\infty$
	for $t\in [0,1]\setminus\{\xeta_0,\ldots,\xeta_{n}\}$
	and $J(\xeta_j)$ is finite for all $j=0,\ldots,n$. Moreover, let $K_1,\dots, K_n$ be singular kernel functions satisfying \eqref{cond:PM} for some $c>0$.
	Consider the following subset
	\begin{equation}
	S_{(\xeta_0,\dots, \xeta_n)}:=\left\{\yy=(y_1,\ldots,y_n) \in S:\
	\xeta_0<y_1<\xeta_2<\ldots<y_n<\xeta_{n}
	\right\}
	\end{equation}
	 of the simplex $S$, and note that it is convex and open\footnote{Let us note on passing that $Y=\cup_{\xx \in (X^c)^{n+1}}S_{(\xeta_0,\dots, \xeta_n)}$, whence openness of $Y$ can be seen immediately from this.}.

	If $\yy\in S_{(\xeta_0,\dots, \xeta_n)}$, then
	\begin{equation}\label{eq:mjetak}
	m_j(\yy)=F(\yy,\xeta_{j})
	\end{equation}
	for $j=0,1,\ldots,n$.

	Recall that $\yy\in Y$ holds if and only if $\rint I_j(\yy)\not\subseteq \rint X=[0,1]\setminus \{\xeta_0,\ldots,\xeta_{n}\}$ for each $j=0,1,\dots, n$. It follows that
	$\yy\in Y$ is equivalent to the fact that $\rint I_j(\yy)\cap\{\xeta_0,\ldots,\xeta_{n}\}\neq \emptyset$ for each $j=0,1,\dots, n$. Since the $n+1$ intervals $
	\rint I_0(\yy)$, $ \rint I_1(\yy),\dots, \rint I_n(\yy)$ are pairwise disjoint, we see that $\yy\in Y$ is equivalent to that each of these intervals contains exactly one of $\xeta_0,\dots, \xeta_{n}$. We conclude that $S_{(\xeta_0,\dots, \xeta_n)}=Y$.

	By the Homeomorphism Theorem \ref{thm:homeo3} we have that
	\begin{equation}\label{eq:homeoment}
	\Phi\vert_Y : Y \rightarrow \bR^n,
	\ \yy\mapsto \big(
	m_1(\yy)-m_0(\yy),\ldots,m_n(\yy)-m_{n-1}(\yy)\big)
	\end{equation}
	is a homeomorphism.
	
	\end{remark}

\begin{proof}[Proof of Theorem \ref{thm:abstint}]
 Set $K_j(t):=\log L_j(\vert t\vert)$. Then $K_j$ is a singular kernel function satisfying \eqref{cond:PM}. Define $J(\xeta_j):=0$ for each $j=0,\dots, n$ and $J(t):=-\infty$ for $t\in [0,1]\setminus\{\xeta_0,\ldots,\xeta_{n}\}$. Then for any $\yy\in S_{(\xeta_0,\dots, \xeta_n)}$
\begin{equation}
f(\yy,t)= \log \prod_{k=1}^n L_k(\vert t-y_k\vert)
\end{equation}
and, by \eqref{eq:mjetak}
\begin{equation}
\exp\left(m_j(\yy)-m_{j-1}(\yy) \right)
=
\frac{\prod_{k=1}^nL_k(\vert\xeta_{j} - y_k\vert)}{\prod_{k=1}^nL_k(\vert\xeta_{j-1} - y_k\vert)}.
\end{equation}

For given $\alpha_0,\ldots,\alpha_{n}>0$ consider $\log\left(\alpha_1/\alpha_0\right),\ldots,\log\left(\alpha_{n}/\alpha_{n-1}\right)$.
As has been mentioned above in \eqref{eq:homeoment}, by the Homeomorphism Theorem \ref{thm:homeo3}, there is a unique
$\yy\in S_{(\xeta_0,\dots, \xeta_n)}$
such that
\begin{equation}
m_j(\yy)-m_{j-1}(\yy)=
\log(\alpha_{j}/\alpha_{j-1})\quad \text{for $j=1,2,\ldots,n$.}
\end{equation}
Therefore, for the function
\begin{equation}
 G_1(t):= \prod_{k=1}^n L_k(\vert t-y_k\vert),
\end{equation}
taking $\log G_1(t)=f(\yy,t)$
and
$\exp m_j(\yy)=G_1(\xeta_{j})$ into account, we conclude that
\begin{equation*}
\frac{G_1(\xeta_{j})}{G_1(\xeta_{j-1})}
=
\frac{\alpha_{j}}{\alpha_{j-1}}\quad \text{for $j=1,2,\ldots,n$.}
\end{equation*}
We are led to
\begin{equation*}
\frac{G_1(\xeta_{j})}{G_1(\xeta_0)}
=
\frac{\alpha_{j}}{\alpha_0} \quad \text{for $j=1,2,\ldots,n$,}
\end{equation*}
therefore for the function
\begin{equation*}
G(t):= \frac{\alpha_0}{G_1(\xeta_0)} G_1(t)=\frac{\alpha_0}{G_1(\xeta_0)} \prod_{k=1}^n L_k(\vert t-y_k\vert),
\end{equation*}
we obtain
\begin{equation*}
G(\xeta_j) = \alpha_j\quad \text{for $j=0,1,\ldots,n$.}
\end{equation*}
Uniqueness is also clear from the previous arguments: The nodes $y_1,\dots, y_n$ are uniquely determined by the values $\log(\alpha_1/\alpha_0),\ldots,\log(\alpha_{n}/\alpha_{n-1})$, due to the homeomorphism theorem, and then $C=\alpha_1\prod_{k=1}^nL_k(\vert\xeta_0-y_k\vert)^{-1}$.
\end{proof}

From here and through some tedious round-about calculus also the full result of Lagrange interpolation can be retrieved, but as the original Lagrange interpolation is an easy exercise anyway, here we spare the reader from the details. Our point is that such interpolation results can be obtained in much greater generality via the approach presented here.

Finally we formulate the following version of Theorem \ref{thm:abstint}, which will be useful for trigonometric interpolation.
\begin{theorem}\label{thm:abstint2} Let $n\in\NN$, and let $L_1,\dots, L_n:[0,1]\to [0,\infty)$ be log-concave functions vanishing at $0$ and satisfying
	\begin{equation*}
	\frac{d}{dt}\log \frac{L_j(t)}{L_j(1-t)}\geq 0\quad\text{ for $j=1,\dots,n$ and for a.e.{} $t\in (0,1)$.}
	\end{equation*}
	 For any choice of $0\leq \xeta_0<\ldots<\xeta_{n}\le 1$ with $\xeta_{n}-\xeta_{0}<1$ and $\alpha_0,\ldots,\alpha_{n}>0$ there is a unique $C>0$ and a unique system of points $y_1\leq y_2\leq \ldots\leq y_n$ with $\xeta_j<y_{j+1}<\xeta_{j+1}$ for each $j\in \{0,\dots,n-1\}$ such that for the function
\begin{equation*}
G(t):= C \prod_{k=1}^n L_k(\vert t-y_k\vert),
\end{equation*}
we have
\begin{equation*}
G(\xeta_j) = \alpha_j\quad \text{for $j=0,1,\ldots,n$.}
\end{equation*}
\end{theorem}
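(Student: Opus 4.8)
The plan is to reproduce the proof of Theorem~\ref{thm:abstint} essentially word for word, the only change being that, since in \eqref{eq:logcbelow} the positive lower bound $c$ has now been weakened to $c=0$, the appeal to Theorem~\ref{thm:homeo3} must be replaced by an appeal to a homeomorphism theorem valid under {\upshape(PM$_0$)}, i.e.\ to Theorem~\ref{thm:periodic} (or to Theorem~\ref{thm:Jinftyprime}). Put $K_j(t):=\log L_j(\vert t\vert)$ for $j=1,\dots,n$: since $L_j$ vanishes at $0$ and, having zeros only possibly at $0$ and $1$, is positive on $(0,1)$, each $K_j$ is a singular kernel function; and from $K_j'(t)=(\log L_j)'(t)$ and $K_j'(t-1)=-(\log L_j)'(1-t)$ for $t\in(0,1)$ one reads off that $K_j'(t)-K_j'(t-1)=\tfrac{d}{dt}\log\bigl(L_j(t)/L_j(1-t)\bigr)\ge0$, i.e.\ $K_j$ satisfies {\upshape(PM$_0$)}. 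The theorems just quoted additionally require the $K_j$ to be strictly concave; this is automatic when the $L_j$ are strictly log-concave, which is the case in all the intended applications (e.g.\ $L_j(t)=\sin(\pi t)$ in the trigonometric setting), and in the general case one reduces to it through the perturbation $L_j(t)\rightsquigarrow L_j(t)\,t^{\varepsilon}$, which is strictly log-concave and turns {\upshape(PM$_0$)} into {\upshape(PM$_{4\varepsilon}$)}.

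As in the proof of Theorem~\ref{thm:abstint}, let $J$ be the field function with $J(\xeta_j):=0$ for $j=0,\dots,n$ and $J(t):=-\infty$ for $t\in[0,1]\setminus\{\xeta_0,\dots,\xeta_n\}$ (this is indeed a field function: it is finite at $n+1$ points, and the weighted count exceeds $n$ exactly because the hypothesis $\xeta_n-\xeta_0<1$ forbids $0$ and $1$ to be simultaneously among the $\xeta_j$). The one genuinely new point of the proof is the observation that, precisely because $\xeta_n-\xeta_0<1$, either $\xeta_0>0$ or $\xeta_n<1$; in the first case $J(0)=-\infty$ and $J(t)=-\infty$ for all $t\in(0,\xeta_0)$, so $J$ satisfies \eqref{eq:J0}, while in the second case $J$ satisfies \eqref{eq:J1}. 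Hence Theorem~\ref{thm:periodic} is applicable to $K_1,\dots,K_n$ together with this $J$.

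From here everything runs exactly as in Remark~\ref{rem:absint} and in the proof of Theorem~\ref{thm:abstint}: the regularity set is the open convex set $Y=S_{(\xeta_0,\dots,\xeta_n)}$, on which $m_j(\yy)=F(\yy,\xeta_j)$ for every $j$ (as $\xeta_j$ is the only non-singular point of $I_j(\yy)$), and by Theorem~\ref{thm:periodic} the difference function $\diff\vert_Y\colon Y\to\RR^n$ is a locally bi-Lipschitz homeomorphism. Given $\alpha_0,\dots,\alpha_n>0$, solving $\diff(\yy)=\bigl(\log(\alpha_1/\alpha_0),\dots,\log(\alpha_n/\alpha_{n-1})\bigr)$ yields the unique $\yy=(y_1,\dots,y_n)\in Y$; then $G_1(t):=\prod_{k=1}^nL_k(\vert t-y_k\vert)$ satisfies $\log G_1=f(\yy,\cdot)$ and $G_1(\xeta_j)=\exp m_j(\yy)$, whence $G_1(\xeta_j)/G_1(\xeta_0)=\alpha_j/\alpha_0$ for all $j$; finally $C:=\alpha_0/G_1(\xeta_0)$ and $G:=C\,G_1$ do the job, and the uniqueness of $\yy$ and of $C$ is inherited from the injectivity of $\diff\vert_Y$. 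I expect the only real obstacle to be the transition from {\upshape(PM$_c$)}, $c>0$, to {\upshape(PM$_0$)} --- that is, fitting this degenerate, finitely supported field $J$ into the scope of the more delicate homeomorphism theorems; once one notices that $\xeta_n-\xeta_0<1$ automatically gives \eqref{eq:J0} or \eqref{eq:J1}, the remainder is a transcription of the argument for Theorem~\ref{thm:abstint}.
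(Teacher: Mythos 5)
Your proof is correct and follows exactly the route the paper takes: the paper's own proof of Theorem \ref{thm:abstint2} is literally the one-sentence remark that the argument for Theorem \ref{thm:abstint} goes through verbatim once Theorem \ref{thm:homeo3} is replaced by Theorem \ref{thm:periodic}, the hypothesis $\xeta_{n}-\xeta_{0}<1$ guaranteeing that the finitely supported field $J$ satisfies \eqref{eq:J0} or \eqref{eq:J1}. The only point where you go beyond the paper is your concern about strict concavity of the $K_j$, which Theorem \ref{thm:periodic} formally requires but the hypotheses on the $L_j$ do not supply; the paper passes over this in silence, so it is not a discrepancy with the paper's proof, but note that your proposed repair --- perturbing to $L_j(t)\,t^{\varepsilon}$ and presumably letting $\varepsilon\downto 0$ --- would not by itself transfer \emph{uniqueness} of $(\yy,C)$ back to the unperturbed problem, so as written it does not actually close that gap.
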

Note here the difference to Theorem \ref{thm:abstint}: We have relaxed the condition \eqref{eq:logcbelow} but require that the interpolation nodes $\xeta_0$ and $\xeta_{n}$ fulfill $\xeta_{n}-\xeta_0<1$. The proof is verbatim the same as for Theorem \ref{thm:abstint} except we use Theorem \ref{thm:periodic} in place of Theorem \ref{thm:homeo3}, for under the additional assumption the field function $J$ used in the proof of Theorem \ref{thm:abstint} satisfies either $J\vert_{[0,\xeta_0)}=-\infty$ or $J\vert_{(\xeta_{n},1]}=-\infty$.

\subsection{Trigonometric Lagrange interpolation}
Analogously, one can derive Lagrange interpolation results for the trigonometric case, but the necessary treatment is a bit less trivial. First, the set $\Tn$ of trigonometric polynomials of degree $n$ has dimension $2n+1$, (so that it would be better called of degree $2n$), and also it can have $2n$ zeroes. Second, these $2n$ zeroes \emph{do not give rise to elementary root factors from the set of trigonometric polynomials themselves}. Indeed, if a $T\in\Tn$ had $2n$ such factors, then the product would become a degree $2n$ trigonometric polynomial. Therefore, we need to factor trigonometric polynomials \emph{by factors outside of the set}, namely, we need to use factors of the form $\sin(\pi(t-x_j))$, which fortunately have the property that any two\footnote{Note also that a \emph{periodic} and analytic function can have, counted with multiplicities, only an \emph{even number of roots} in one period.} of them multiplies together to a degree 1 trigonometric polynomial. This detail, usually having no importance, is of some interest in our approach.

So in what follows a factor we think of, more precisely its logarithm, looks like $\log\vert\sin(\pi t)\vert$. Fortunately, this is indeed a logarithmically concave factor, with a derivative $\pi\cot(\pi t)$, indeed decreasing both on $(0,1)$ and on $(-1,0)$. Moreover, this kernel is strictly concave, singular, and periodic, so that it satisfies Condition \eqref{cond:PM} \emph{with constant $c=0$} (and with no larger one). Therefore, it is a necessity here to use a theorem valid even for condition (PM$_0$).

Fortunately, if we place our interpolation points right within the interior $(0,1)$---which otherwise by a trivial translation can always be  assumed---then the arising field function will have $J\equiv -\infty$ in a right half-neighborhood of $0$, and the singularity assumption \eqref{eq:J0} is met. Therefore, we obtain the corresponding Lagrange interpolation theorem analogously to the algebraic polynomial case, but now for the subsystem of trigonometric polynomials which have exactly $2n$ roots (and for prescription of values with alternating signs). Thus we obtain the following result, the details of whose proof we leave to the reader, cf.{} the sketch of proof of Theorem \ref{thm:abstint2}.

\begin{theorem}
Let $n\in\NN$, let $0\leq \xeta_0<\ldots<\xeta_{n}\leq 1$ be with $\xeta_{n}-\xeta_0<1$, let $\nu_1,\dots,\nu_n>0$, $0<a_1,\dots, a_n\leq 1$ and let $\alpha_0,\ldots,\alpha_{n}>0$. Then the following assertions are true:
\begin{abc}
	\item There is a unique $C>0$ and a unique system of points $y_1\leq y_2\leq \ldots\leq y_{n}$ with $\xeta_j<y_{j+1}<\xeta_{j+1}$ for each $j\in \{0,\dots,n-1\}$ such that for the function
\begin{equation*}
S(t):= C \prod_{k=1}^n\big\vert \sin(a_k\pi(t-y_k))\big\vert^{\nu_k}
\end{equation*}
we have
\begin{equation*}
S(\xeta_j) = \alpha_j\quad \text{for $j=0,1,\ldots,n$.}
\end{equation*}
\item If $\nu_1,\dots,\nu_n\in\NN$, then for the previously chosen $0<y_1<y_2<\cdots<y_{n}<1$ and $C$ the ``generalized trigonometric polynomial''
\begin{equation*}
T(t):=C \prod_{k=1}^{n} \sin^{\nu_k} (a_k\pi (t-y_k))
\end{equation*}
satisfies
\begin{equation*}
T(\xeta_j)= (-1)^{\sum_{k=j+1}^n\nu_k}\alpha_j \quad\text{for $j=0,1,\ldots,n$}.
\end{equation*}
\item If $a_1,\dots,a_n<1$, then the assertion in {\upshape (a)} and {\upshape(b)} remain true even without the condition $\xeta_{n}-\xeta_0<1$.
\end{abc}\end{theorem}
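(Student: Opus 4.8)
The plan is to run the argument of Theorem~\ref{thm:abstint} verbatim, after taking logarithms, the only new ingredients being (i) a verification that the ``sine kernels'' fall within the abstract framework, and (ii) a sign computation for part~(b).

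Set $K_k(t):=\nu_k\log\lvert\sin(a_k\pi t)\rvert$ for $t\in(-1,0)\cup(0,1)$. Since $a_k\le 1$, the value $a_k\pi\lvert t\rvert$ stays in $(0,\pi)$, so $\lvert\sin(a_k\pi t)\rvert>0$ on each of $(-1,0)$ and $(0,1)$, there $K_k''(t)=-\nu_k a_k^2\pi^2\csc^2(a_k\pi t)<0$, and $K_k(t)\to-\infty$ as $t\to0$; thus each $K_k$ is a singular, strictly concave kernel function (finite at $\pm1$ for $a_k<1$, equal to $-\infty$ there for $a_k=1$, which is harmless). The crucial point is condition (PM$_0$): for $t\in(0,1)$ one computes, using $\cot u+\cot v=\sin(u+v)/(\sin u\sin v)$,
\[
K_k'(t)-K_k'(t-1)=\nu_k a_k\pi\bigl(\cot(a_k\pi t)+\cot(a_k\pi(1-t))\bigr)=\frac{\nu_k a_k\pi\,\sin(a_k\pi)}{\sin(a_k\pi t)\,\sin(a_k\pi(1-t))}\ge 0 ,
\]
which vanishes identically exactly when $a_k=1$; when $a_k<1$ the right-hand side is smallest at $t=\tfrac12$, so in fact (PM$_c$) holds with the explicit constant $c:=\min_{1\le k\le n}2\nu_k a_k\pi\cot(a_k\pi/2)>0$.

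Following Remark~\ref{rem:absint}, put $J(\xeta_j):=0$ for all $j$ and $J(t):=-\infty$ otherwise. Under the hypothesis $\xeta_n-\xeta_0<1$ the endpoints $0$ and $1$ cannot both be nodes, so $J$ is an admissible $n$-field function and satisfies whichever of \eqref{eq:J0}, \eqref{eq:J1} corresponds to the endpoint that is not among the $\xeta_j$; moreover $Y=S_{(\xeta_0,\dots, \xeta_n)}=\{\yy:\xeta_0<y_1<\xeta_1<\cdots<\xeta_{n-1}<y_n<\xeta_n\}$ and, for $\yy\in Y$, $\xeta_j$ is the unique point of $I_j(\yy)$ where $F(\yy,\cdot)$ is finite, so $m_j(\yy)=f(\yy,\xeta_j)=\sum_{k=1}^n\nu_k\log\lvert\sin(a_k\pi(\xeta_j-y_k))\rvert$. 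By Theorem~\ref{thm:periodic} the map $\diff\vert_Y\colon Y\to\RR^n$ is a bi-Lipschitz homeomorphism, so there is a unique $\yy\in Y$ with $m_j(\yy)-m_{j-1}(\yy)=\log(\alpha_j/\alpha_{j-1})$ for $j=1,\dots,n$; writing $S_1(t):=\prod_{k}\lvert\sin(a_k\pi(t-y_k))\rvert^{\nu_k}$ this yields $S_1(\xeta_j)/S_1(\xeta_0)=\alpha_j/\alpha_0$, whence $C:=\alpha_0/S_1(\xeta_0)$ and $S:=C\,S_1$ prove (a), with uniqueness of $(y_1,\dots,y_n)$ being injectivity of $\diff\vert_Y$ and $C$ then forced. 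For (b) we have $\lvert T\rvert=S$, so only signs remain: since $y_k\in(\xeta_{k-1},\xeta_k)$ and $\xeta_n-\xeta_0<1$, for $k\le j$ one has $\xeta_j-y_k\in(0,1)$ and for $k>j$ one has $\xeta_j-y_k\in(-1,0)$, hence $\sin(a_k\pi(\xeta_j-y_k))$ is positive for $k\le j$ and negative for $k>j$; taking $\nu_k$-th powers and multiplying gives $T(\xeta_j)=(-1)^{\sum_{k=j+1}^n\nu_k}\alpha_j$. For (c), if every $a_k<1$ then the kernels satisfy (PM$_c$) with the above $c>0$, so one invokes Theorem~\ref{thm:homeo3} in place of Theorem~\ref{thm:periodic}; this imposes no cusp condition on $J$, so the hypothesis $\xeta_n-\xeta_0<1$ may be dropped (the finiteness and sign arguments survive because $a_k\lvert\xeta_j-y_k\rvert<1$), the sole borderline configuration $\xeta_0=0,\xeta_n=1$ — where the $\xeta_j$-supported $J$ just misses admissibility — being reduced to interior nodes by an affine rescaling of $[0,1]$ onto a slightly larger interval, legitimate precisely because $a_k<1$ leaves room.

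I expect the main obstacle to be step (i): confirming that the sine kernels really satisfy all the structural requirements, above all establishing (PM$_0$) with the sharp constant — this is where the trigonometric identity and the dichotomy $a_k=1$ versus $a_k<1$ do the real work — together with the endpoint bookkeeping for $J$ in the degenerate case of (c). After that, the argument is a transcription of the proof of Theorem~\ref{thm:abstint}.
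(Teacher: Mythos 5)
Your proposal is correct and follows essentially the same route as the paper: define $K_k:=\nu_k\log\lvert\sin(a_k\pi\,\cdot)\rvert$, verify strict concavity, singularity and (PM$_{c_k}$) with $c_k=2\nu_k a_k\pi\cot(a_k\pi/2)$ (which is $0$ exactly when $a_k=1$), take the discrete field $J$ supported on $\{\xeta_0,\dots,\xeta_n\}$ as in Remark \ref{rem:absint}, and apply Theorem \ref{thm:periodic} (resp.\ Theorem \ref{thm:homeo3} for part (c)) followed by the sign analysis. Your explicit trigonometric identity for the periodized monotonicity and your attention to the borderline configuration $\xeta_0=0$, $\xeta_n=1$ in part (c) are, if anything, slightly more careful than the paper's own treatment.
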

Note that $T$ is in fact a 
trigonometric polynomial on $[0,1]$ 
if $a_1=\cdots=a_n=1$ and $\sum_{k=1}^n\nu_k$ is even.
\begin{proof}
	For $j\in \{1,\dots,n\}$ we set $L_j(t):=\vert \sin(a_j\pi t)\vert^{\nu_j}$ and $K_j:=\log \circ L_j$, which are then strictly concave kernel functions, since $0\leq a_j\leq 1$. For $t>0$ we compute
	\begin{align*}
	\frac{d}{dt}\log \frac{L_{j}(t)}{L_{j}(1-t)}&=\frac{d}{dt}\log \frac{ \sin^{\nu_j}(a_j\pi t)}{\sin^{\nu_j}(a_j\pi (1-t))}=\nu_ja_j\pi\Bigl(\frac{\cos(a_j\pi t)}{\sin(a_j\pi t)}+\frac{\cos(a_j\pi (1-t))}{\sin(a_j\pi (1-t))}\Bigr)\\
	&\geq2 \nu_ja_j\pi\frac{\cos(a_j\pi/2)}{\sin(a_j\pi /2)}=:c_j.
	\end{align*}
	If $a_j<1$, then $c_j>0$, and if $a_j=1$, then $c_j=0$. In either case we see that $K_j$ satisfies condition (PM$_{c_j}$) for some $c_j\geq 0$. Define $J(\xeta_0)=\cdots =J(\xeta_{n})=0$ and set otherwise $J(t):=-\infty$. By the assumption $\xeta_{n}-\xeta_0<1$, the field function $J$ satisfies the conditions of Theorem \ref{thm:periodic}, and we immediately obtain the existence part in assertion (a), as in the proof of Theorem \ref{thm:abstint}. We even see that the condition $\xeta_{n+1}-\xeta_1<1$ is not necessary if $a_1<1,\dots,a_n<1$, in this case we can apply Theorem \ref{thm:homeo3}. Uniqueness follows as in the proof of Theorem \ref{thm:abstint}, while assertion (b) is the usual sign-analysis.
\end{proof}
\subsection{Moving node Hermite--Fej\'er interpolation}
In 1956 Davis proposed a problem in Amer.{} Math.{} Monthly, \cite{Davis1957} to find
an equivalent description of $n-1$ real numbers
which can arise as critical values of degree $n$ real polynomials. In his solution he applied similar techniques as we have presented for the abstract homeomorphism Theorem \ref{thm:homeo3}. Going in this direction J.{} Mycielski, S.{} Paszkowski proved the following result of similar flavor, concerning (Hermite--Fej\'er) interpolation
with non-prescribed, i.e., 
moving nodes, see \cite{MP}.
\begin{theorem}[Mycielski, Paszkowski]\label{thm:eredetiMP}
Let $\alpha_0,\alpha_1,\ldots,\alpha_n>0$ be given.
There exists a unique polynomial $p(t)$
with real coefficients of degree $n$
and $0=z_0<z_1<\ldots<z_n=1$ such that
\begin{equation*}
p(z_j)= (-1)^{n-j} \alpha_j, \quad\text{for $j=0,1,\ldots,n$}
\end{equation*}
and
\begin{equation*}
p'(z_j)=0,\qquad j=1,2,\ldots,n-1.
\end{equation*}
\end{theorem}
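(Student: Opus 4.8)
The strategy is to recognize the problem as an instance of the sum‑of‑translates homeomorphism with logarithmic kernels and a trivial field. Take $K_1=\dots=K_n=K$ with $K(t):=\log|t|$ on $(-1,0)\cup(0,1)$: these are singular kernel functions, strictly decreasing on $(-1,0)$ and strictly increasing on $(0,1)$, so Corollary \ref{cor:homeo3} applies with the field $J\equiv 0$; since then $X_J=\emptyset$ has empty interior, the regularity set is the full open simplex, $Y=S$. For $\xx=(x_1,\dots,x_n)\in S$ write $q_\xx(t):=\prod_{i=1}^n(t-x_i)$, so $F(\xx,t)=\sum_i\log|t-x_i|=\log|q_\xx(t)|$ and $m_j(\xx)=\log\bigl(\max_{t\in[x_j,x_{j+1}]}|q_\xx(t)|\bigr)$ (with $x_0:=0$, $x_{n+1}:=1$). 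The first, routine, step is to locate these interval maxima. Since $(\log|q_\xx|)'(t)=\sum_i(t-x_i)^{-1}$ is negative on $(0,x_1)$ and positive on $(x_n,1)$, while $(\log|q_\xx|)''<0$ on every $(x_j,x_{j+1})$, the maximum on $I_0(\xx)$ is attained at $0$, the one on $I_n(\xx)$ at $1$, and for $1\le j\le n-1$ the maximum on $I_j(\xx)$ is attained at the unique zero $z_j=z_j(\xx)\in(x_j,x_{j+1})$ of $q_\xx'$ (Rolle gives existence and uniqueness of this zero). With $z_0:=0$, $z_n:=1$ one gets $0=z_0<z_1<\dots<z_n=1$.

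For existence, apply Corollary \ref{cor:homeo3} to obtain the (unique) $\xx\in S$ with $\Phi(\xx)=\bigl(\log(\alpha_1/\alpha_0),\dots,\log(\alpha_n/\alpha_{n-1})\bigr)$, i.e. $m_j(\xx)-m_{j-1}(\xx)=\log(\alpha_j/\alpha_{j-1})$ for every $j$. Then $e^{m_j(\xx)}=\lambda\alpha_j$ for a common $\lambda>0$, and from $e^{m_0(\xx)}=|q_\xx(0)|=\prod_i x_i$ one reads off $\lambda=\prod_i x_i/\alpha_0$. Put $p(t):=\lambda^{-1}q_\xx(t)=\tfrac{\alpha_0}{\prod_i x_i}\,q_\xx(t)$, a real polynomial of degree exactly $n$ with $p'(z_j)=\lambda^{-1}q_\xx'(z_j)=0$ for $j=1,\dots,n-1$. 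By construction $\max_{I_j(\xx)}|p|=\alpha_j$ for every $j$, and a short sign count finishes the matching of values: $q_\xx$ has sign $(-1)^{n-j}$ on $(x_j,x_{j+1})$, in particular at $z_j$; also $q_\xx(0)=(-1)^n\prod_i x_i$ and $q_\xx(1)>0$. Since $\lambda^{-1}>0$, this yields $p(z_j)=(-1)^{n-j}\alpha_j$ for $j=0,1,\dots,n$, as required.

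For uniqueness, let $p$ and $0=z_0<\dots<z_n=1$ be any pair satisfying the conclusion. The prescribed values have signs $(-1)^{n-j}$, so $p$ changes sign on each of the $n$ intervals $(z_{j-1},z_j)$; as $\deg p=n$ this forces $p$ to have exactly $n$ simple real roots $0<x_1<\dots<x_n<1$, one in each $(z_{j-1},z_j)$. Write $p=C\,q_\xx$ with $\xx=(x_1,\dots,x_n)$ and $C\neq 0$. By Rolle the $n-1$ zeros of $p'$ lie one in each $(x_j,x_{j+1})$, and since $p'(z_j)=0$ for the $n-1$ interior nodes these must be exactly the critical points of $q_\xx$, whence $z_j\in(x_j,x_{j+1})$ and $z_j=z_j(\xx)$ in the notation above. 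Therefore $\max_{I_j(\xx)}|p|=|p(z_j)|=\alpha_j$ for interior $j$ and $=|p(0)|=\alpha_0$, $=|p(1)|=\alpha_n$ for $j=0,n$; dividing by $|C|$ gives $m_j(\xx)=\log\alpha_j-\log|C|$, hence $\Phi(\xx)=\bigl(\log(\alpha_1/\alpha_0),\dots,\log(\alpha_n/\alpha_{n-1})\bigr)$. Injectivity of $\Phi$ on $Y=S$ pins down $\xx$, hence the critical points $z_1,\dots,z_{n-1}$; finally $|C|=\alpha_0/\prod_i x_i$ is forced by the value at $0$, and $C>0$ by $p(1)=\alpha_n>0$ together with $q_\xx(1)>0$. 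Thus $(p,(z_j))$ is unique.

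The only genuinely non‑formal part — and the place where a little care is needed — is the bridge between the analytic statement (the homeomorphism $\Phi$ on $Y=S$) and the polynomial formulation: checking that for the logarithmic kernels with $J\equiv0$ the interval maxima $m_j(\xx)$ occur precisely at $0$, at $1$, and at the interior critical points of $q_\xx$, and doing the sign bookkeeping so that rescaling the monic polynomial $q_\xx$ reproduces exactly the alternating values $(-1)^{n-j}\alpha_j$. Everything else is a direct transcription of Corollary \ref{cor:homeo3}; in particular no independent existence/uniqueness argument for interpolants is needed beyond the homeomorphism itself.
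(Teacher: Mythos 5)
Your proposal is correct and follows essentially the same route as the paper, which obtains this theorem as the specialization $w\equiv 1$, $L_j(t)=t$ of its Theorem \ref{thm:abstrMP} (see Remark \ref{rem:abstMP}), itself proved by exactly the homeomorphism argument you use with logarithmic kernels and trivial field. The only difference is that you spell out the location of the interval maxima, the sign bookkeeping, and the converse (uniqueness) direction that the paper leaves as a ``simple sign-analysis''; these details check out.
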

We note the immaterial difference that in \cite{MP} proved this for the interval $[-1,1]$. A generalization of this has been obtained
by B.{} Bojanov in \cite{Bojanov1979} (see Corollary 2 there)
in the sense that one can prescribe vanishing of higher order derivatives at the moving nodes.

Let us illustrate how the result of Mycielski and Paszkowski can be established with the techniques developed in this paper and prove the following, more general result instead, and remark at the same time the result of Bojanov is not (yet) accessible by our methods in this paper.

\begin{theorem}\label{thm:abstrMP}
Let $n\in\NN$, and let $L_1,\dots, L_n:[0,1]\to [0,\infty)$
be log-concave functions vanishing at $0$ and satisfying for some $ c>0$
	\begin{equation*}
	\frac{d}{dt}\log \frac{L_j(t)}{L_j(1-t)}\geq c>0\quad\text{for $j=1,\dots,n$ and for a.e.{} $t\in (0,1)$,}
	\end{equation*}
	and let $w:[0,1]\to [0,\infty)$ be an upper semicontinuous function with at least $n+1$ non-zero values within $(0,1)$. For each $\alpha_0,\alpha_1,\ldots,\alpha_n>0$ there exist unique $0<y_1<y_2<\cdots<y_n<1$, and $C>0$ such that for the function
\begin{equation*}
G(t):=Cw(t)\prod_{k=1}^n L_k(\vert t-y_k\vert)
\end{equation*}
 there are $z_0,\dots, z_n\in[0,1]$ with $0\leq z_0<y_1<z_1<y_2<\cdots z_{n-1}<y_n<z_n\leq 1$ and
\begin{equation*}
G(z_j)= \alpha_j \quad\text{for $j=0,1,\ldots,n$},
\end{equation*}
and $z_j$ is a maximum point of $G$ in the interval $[y_{j},y_{j+1}]$ for each $j=0,\dots,n$ (recall the conventions: $y_0=0$, and $y_{n+1}=1$).
\end{theorem}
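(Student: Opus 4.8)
The plan is to reduce the statement to the Homeomorphism Theorem \ref{thm:homeo3}, exactly along the lines of the proof of Theorem \ref{thm:abstint}, but now keeping the field function $J$ arbitrary (upper semicontinuous) rather than finitely supported, and exploiting that the supremum $m_j$ is attained as explained in the remarks preceding Section \ref{sec:Lagr}. Concretely, I would set $K_j(t):=\log L_j(\vert t\vert)$ for $j=1,\dots,n$; by log-concavity and the hypothesis on $\tfrac{d}{dt}\log\bigl(L_j(t)/L_j(1-t)\bigr)\ge c>0$ these are singular kernel functions satisfying \eqref{cond:PM} with this $c$. I would put $J(t):=\log w(t)$ (with $J(t)=-\infty$ where $w(t)=0$); the requirement that $w$ have at least $n+1$ non-zero values in $(0,1)$ says precisely that $J$ is an $n$-field function, and since $w$ is upper semicontinuous so is $J$, hence $F(\yy,\cdot)$ is upper semicontinuous. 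Then $f(\yy,t)=\sum_k K_k(t-y_k)$ and $F(\yy,t)=J(t)+f(\yy,t)=\log\bigl(w(t)\prod_k L_k(\vert t-y_k\vert)\bigr)$.

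Next I would invoke Theorem \ref{thm:homeo3}: the difference function $\diff\vert_Y:Y\to\RR^n$ is a homeomorphism. Given $\alpha_0,\dots,\alpha_n>0$, form the target vector $\bigl(\log(\alpha_1/\alpha_0),\dots,\log(\alpha_n/\alpha_{n-1})\bigr)\in\RR^n$; by surjectivity and injectivity there is a unique $\yy=(y_1,\dots,y_n)\in Y$ with $m_j(\yy)-m_{j-1}(\yy)=\log(\alpha_j/\alpha_{j-1})$ for $j=1,\dots,n$. Since $\yy\in Y\subseteq S$ we have $0<y_1<\dots<y_n<1$. Because $F(\yy,\cdot)$ is upper semicontinuous and $m_j(\yy)>-\infty$ for every $j$, the supremum $m_j(\yy)=\sup_{t\in I_j(\yy)}F(\yy,t)$ is attained at some $z_j\in I_j(\yy)=[y_j,y_{j+1}]$; this is exactly the ``maximum point of $G$ in $[y_j,y_{j+1}]$'' claim. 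Setting $C:=\alpha_0\big/\exp\!\bigl(m_0(\yy)\bigr)=\alpha_0\big/\bigl(w(z_0)\prod_k L_k(\vert z_0-y_k\vert)\bigr)$ and $G(t):=C\,w(t)\prod_{k=1}^n L_k(\vert t-y_k\vert)$, a telescoping of the equalities $m_j(\yy)-m_{j-1}(\yy)=\log(\alpha_j/\alpha_{j-1})$ gives $\exp(m_j(\yy))=\alpha_j/C$, i.e.\ $G(z_j)=\alpha_j$ for all $j$. Uniqueness of $\yy$ (hence of $C$) is forced by the homeomorphism property, just as in Theorem \ref{thm:abstint}.

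The one genuinely new point over Theorem \ref{thm:abstint}, and the step I expect to need the most care, is establishing the \emph{strict} separation $0\le z_0<y_1<z_1<y_2<\dots<z_{n-1}<y_n<z_n\le 1$, rather than merely $z_j\in[y_j,y_{j+1}]$. The inequalities $z_{j-1}<y_j<z_j$ are immediate once $z_j\ne y_j$ for all relevant $j$, and this follows from the singularity of the kernels: since $K_j(0)=-\infty$, we have $F(\yy,y_j)=-\infty$ for every $j\in\{1,\dots,n\}$, so the maximum point $z_j$ of $F(\yy,\cdot)$ on $I_j(\yy)$ (where $m_j(\yy)>-\infty$) cannot coincide with either endpoint $y_j$ or $y_{j+1}$ when that endpoint is one of the singular nodes; thus $z_j\in(y_j,y_{j+1})$ for $1\le j\le n-1$, while $z_0\in[0,y_1)$ and $z_n\in(y_n,1]$. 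Finally one should note that $G$ is continuous (it is $\exp\circ F(\yy,\cdot)$ off the nodes and vanishes at the nodes since each $L_k$ vanishes at $0$), so the interpolation values $G(z_j)=\alpha_j>0$ are consistent with $z_j$ being an interior maximum point, completing the argument.
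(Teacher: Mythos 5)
Your proposal is correct and follows essentially the same route as the paper's proof: translate to kernels $K_j=\log L_j(|\cdot|)$ and field $J=\log w$, apply Theorem \ref{thm:homeo3} to the target vector $(\log(\alpha_j/\alpha_{j-1}))_j$, use upper semicontinuity to attain the maxima and singularity of the kernels to get the strict interlacing, then normalize by $C$ and telescope. (Only your closing aside is off: $G$ is merely upper semicontinuous, not continuous, since $w$ is only u.s.c.\ — but nothing in the argument needs more than that.)
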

\begin{remark}\label{rem:abstMP}
\begin{num}
\item The statement about the maximum at $z_j$ in the previous theorem is equivalent to the fact that
 the graph of $G$ has a horizontal supporting line at $(z_j,G(z_j))$ in the interval $[y_{j},y_{j+1}]$. In turn, provided one-sided derivatives exist, this is equivalent to the fact that $D_-G(z_j)\geq 0$ and $D_+G(z_j)\leq 0$. If $G$ is differentiable, then, of course, we can equivalently say $G'(z_j)=0$, provided $z_j\in (0,1)$, as in Theorem \ref{thm:eredetiMP}.
 \item If $w\equiv 1$ and $L_1,\dots, L_n$ are monotone increasing, then $z_0=0$ and $z_{n}=1$.
 \item If $w\equiv 1$ and $L_j(t)=L(t)=t$, then we directly recover Theorem \ref{thm:eredetiMP} with the polynomial $p(t)=C \prod_{j=1}^n(t-y_j)$, $G=\vert p\vert$, up to the alternating sign, which, however, can be easily obtained by simple sign-analysis.
 \item If $w$ is log-concave and one of $w, L_1,\dots, L_n$ is strictly log-concave, then $z_0,\dots, z_n$ with the asserted properties exist \emph{uniquely}.
 \item If $w(0)=0$, then $z_0>0$; and if $w(1)=0$, then $z_n<1$.
 \end{num}
\end{remark}
\begin{proof}
Consider the sum of translates function $F$ corresponding to the kernels $K_j(t):=\log L_j(\vert t\vert)$, the field function $J(t):=\log(w(t))$ and the $n$ values
$\log(\alpha_1/\alpha_0)$, $\ldots,\log(\alpha_{n}/\alpha_{n-1})$.

By the homeomorphism theorem (Theorem \ref{thm:homeo3}, the application of which is justified by the arguments in Section \ref{sec:Lagr}),
there exists a unique
$\yy\in S$
such that
\begin{equation}
m_{j+1}(\yy)-m_j(\yy)
=
\log \frac{\alpha_{j+1}}{\alpha_{j}} \quad \text{for $j=0,1,\ldots,n-1$} .
\end{equation}
By the strict concavity assumption, $m_j(\yy)$ is attained at
a unique point $z_j\in I_j(\yy)=[y_j,y_{j+1}]$, and
$z_j\in(y_j,y_{j+1})$ for $j=1,\ldots,n-1$.
We thus have the interlacing:
\begin{equation}\label{eq:interl}
0=y_0\leq z_0<y_1<z_1<y_2<z_2<\cdots< y_n<z_n\leq 1.
\end{equation}
Define
\begin{equation*}
G_1(t):=w(t)\prod_{k=1}^n L_k(\vert t-y_k\vert).
\end{equation*}
Hence
\begin{equation*}
\log\frac{\alpha_{j+1}}{\alpha_{j}}=m_{j+1}(\yy)-m_j(\yy)=\log\frac{G_1(z_{j+1})}{G_1(z_j)}
\quad \text{for $j=0,1,\ldots,n-1$} .
\end{equation*}
After exponentiating and then multiplying together
we can write
\begin{equation*}
\frac{\alpha_k}{\alpha_0}=\frac{G_1(z_{k})}{G_1(z_0)}\quad \text{for $k=1,2,\ldots,n$.}
\end{equation*}
Putting
\begin{equation*}
G(t):= \frac{\alpha_0}{G_1(z_0)} G_1(t)
\end{equation*}
we arrive at
\begin{equation*}
 G(z_j)= \alpha_j\quad \text{for $j=0,1,\ldots,n$.}
\end{equation*}
Since $z_j$ is maximum point of $F(\yy,\cdot)$ in $[y_j,y_{j+1}]$ for $j=0,\dots,n$, it is a maximum point of $G_1$, hence of $G$, in the same interval, as claimed.
Also uniqueness follows easily from the previous considerations, due to the homeomorphism theorem.
\end{proof}

Coming back to the original moving node interpolation result of Mycielski, Paszkowski we can formulate the following immediate corollary of Theorem \ref{thm:eredetiMP} for the weighted case (we formulate it only for the case of differentiable weights, in order to emphasize the analogy to Theorem \ref{thm:eredetiMP}).

\begin{theorem}\label{thm:weightedMP}
Let $w:[0,1]\to [0,\infty)$ be a differentiable function with at least $n+1$ non-zero values in $[0,1]$, and let $\nu_1,\dots\nu_n>0$ for some $n\in \NN$. For every $\alpha_0,\alpha_1,\ldots,\alpha_n>0$
there exist unique $0<y_1<\dots<y_n<1$ and a unique $C\in \RR$, and there are $z_0,\dots, z_n\in[0,1]$ with $0\leq z_0<y_1<z_1<y_2<\cdots< y_n<z_n\leq 1$ such that
for the function
\begin{equation*}
G(t):=C\prod_{k=1}^n\vert t-y_k\vert^{\nu_k}
\end{equation*}
one has
\begin{equation*}
w(z_j)G(z_j)= \alpha_j, \quad\text{for $j=0,1,\ldots,n$}
\end{equation*}
and
\begin{align*}
D_-(w\cdot G)(z_j) &\geq 0\quad \text{for }j=0,1,2,\ldots,n,\text{ if $z_j\in (0,1]$,}\\
D_+(w\cdot G)(z_j) &\leq 0\quad \text{for }j=0,1,2,\ldots,n,\text{ if $z_j\in [0,1)$}.
\end{align*}
Moreover, if $\nu_1,\dots,\nu_n$ are positive integers, then
\begin{equation*}
w(z_j)p(z_j)= (-1)^{\sum_{k=j}^n\nu_k} \alpha_j, \quad\text{for $j=0,1,\ldots,n$}.
\end{equation*}
\end{theorem}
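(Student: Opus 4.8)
The plan is to obtain Theorem~\ref{thm:weightedMP} as a direct specialization of Theorem~\ref{thm:abstrMP}, applied with the log-concave factors $L_k(t):=t^{\nu_k}$, $k=1,\dots,n$. First I would check that these $L_k$ meet the hypotheses of Theorem~\ref{thm:abstrMP}: each $L_k:[0,1]\to[0,\infty)$ vanishes at $0$ (since $\nu_k>0$), $\log L_k(t)=\nu_k\log t$ is concave on $(0,1)$, and for a.e.\ $t\in(0,1)$
\begin{equation*}
\frac{d}{dt}\log\frac{L_k(t)}{L_k(1-t)}=\nu_k\Bigl(\frac1t+\frac1{1-t}\Bigr)\ge 4\nu_k\ge c:=4\min_{1\le k\le n}\nu_k>0 .
\end{equation*}
Since $w$ is differentiable it is continuous, hence upper semicontinuous, and its non-zero values suffice to make $J:=\log w$ (with $\log 0:=-\infty$) an admissible $n$-field function, so Theorem~\ref{thm:abstrMP} applies. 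It produces a unique tuple $0<y_1<\dots<y_n<1$ and a unique $C>0$, together with points $z_0,\dots,z_n$ obeying the interlacing $0\le z_0<y_1<z_1<\dots<y_n<z_n\le1$, such that, writing $G(t):=C\prod_{k=1}^n\vert t-y_k\vert^{\nu_k}$ as in the statement, the function $w\cdot G$ equals $\alpha_j$ at $z_j$ and attains its maximum over $[y_j,y_{j+1}]$ at $z_j$, for every $j=0,\dots,n$. In particular $w(z_j)G(z_j)=\alpha_j$, which is the first assertion.

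Next I would translate the maximality of $z_j$ on $[y_j,y_{j+1}]$ into the one-sided derivative inequalities by invoking Remark~\ref{rem:abstMP}(1). The point that makes that remark applicable here, and the reason for assuming $w$ differentiable rather than merely upper semicontinuous, is that by the interlacing $z_j\ne y_k$ for all $k$, so $G(t)=C\prod_k\vert t-y_k\vert^{\nu_k}$ is smooth in a neighbourhood of each $z_j$ lying in $(0,1)$ and has the relevant one-sided derivative at $z_j\in\{0,1\}$; together with differentiability of $w$ this shows $w\cdot G$ has one-sided derivatives at every $z_j$. Consequently the condition that $z_j$ maximizes $w\cdot G$ on $[y_j,y_{j+1}]$ is equivalent to $D_-(wG)(z_j)\ge0$ whenever $z_j\in(0,1]$ (that is, whenever $z_j$ is not the left endpoint $0$) and $D_+(wG)(z_j)\le0$ whenever $z_j\in[0,1)$; at an interior $z_j$ both inequalities hold with common value $0$, at $z_0=0$ only the second applies, and at $z_n=1$ only the first. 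This is precisely the asserted derivative statement.

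For uniqueness I would note that uniqueness of $(y_1,\dots,y_n)$ and of the constant is inherited from Theorem~\ref{thm:abstrMP} (ultimately from the homeomorphism Theorem~\ref{thm:homeo3} via the reduction in Section~\ref{sec:Lagr}), and that any admissible real $C$ is automatically positive because $w\ge0$, $\prod_k\vert z_j-y_k\vert^{\nu_k}>0$ and $\alpha_j>0$; hence allowing $C\in\RR$ does not widen the solution set. Finally, in the integer case $\nu_1,\dots,\nu_n\in\NN$, I would put $p(t):=C\prod_{k=1}^n(t-y_k)^{\nu_k}$ and use that the interlacing gives $z_j<y_k$ exactly for $k\ge j+1$, so that $p(z_j)/G(z_j)$ equals a fixed sign $\pm1$ determined only by $\sum_{k>j}\nu_k$; the signed interpolation identity then follows from $w(z_j)G(z_j)=\alpha_j$ by the same elementary sign bookkeeping already carried out for Theorem~\ref{thm:abstint} and its trigonometric analogue. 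I expect no serious obstacle, as the theorem is essentially a rereading of Theorem~\ref{thm:abstrMP}; the one point demanding care is the endpoint case distinction of the second paragraph, i.e., stating and checking the one-sided conditions correctly when $z_0$ or $z_n$ coincides with $0$ or $1$.
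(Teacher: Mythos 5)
Your proposal is correct and follows essentially the same route as the paper, whose proof is exactly this one-line specialization of Theorem~\ref{thm:abstrMP} to $L_j(t)=\vert t\vert^{\nu_j}$ together with Remark~\ref{rem:abstMP} and a sign analysis. Note only that your sign bookkeeping yields $(-1)^{\sum_{k=j+1}^{n}\nu_k}$, which is in fact the correct exponent (it is consistent with the Mycielski--Paszkowski case $\nu_k\equiv 1$, where the sign is $(-1)^{n-j}$); the exponent $\sum_{k=j}^{n}\nu_k$ in the printed statement is an off-by-one slip, so your derivation should not be ``corrected'' to match it.
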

\begin{proof}
For $L_j(t)=\vert t\vert^{\nu_j}$, $j=1,\dots,n$, the conditions of Theorem \ref{thm:abstrMP} are satisfied, and with Remark \ref{rem:abstMP} the assertion follows immediately, with an additional sign-analysis to recover the alternating signs.
\end{proof}
\subsection{Moving node Hermite--Fej\'er interpolation in the trigonometric case}

We first prove the analogue of the abstract moving node Hermite--Fej\'er weighted interpolation theorem (Theorem \ref{thm:abstrMP}) for the case of periodic kernels, i.e., for the torus case. Note that the case of unweighted trigonometric interpolation is covered by the above cited classical result of Videnskii.

If we are to work with periodic factors, like $\vert\sin(\pi t)\vert$, and their logarithms like $\log\vert\sin(\pi t)\vert$, then a technicality we encounter here is that unweighted moving node Hermite--Fej\'er interpolation would mean no field, i.e. $J\equiv 0$. At first sight, that seems to be unmanageable, for periodic kernel functions we have Condition \eqref{cond:PM} only with $c=0$ and for no larger $c$, while under (PM$_0$) we needed some singularity or cusp condition on $J$. And indeed, with $J\equiv 0$ no homeomorphism can be expected on the torus $\TT$, for then a simple translation (rotation) would result in identically the same function values of $F(\xx,\cdot)$ (only at translated values of the variable $t \in \TT$), whence the same set of $m_j(\xx)$ and the same function value of $\Phi(\xx)$. Therefore, to get uniqueness, i.e., homeomorphism, we indeed need some restructuring. The right modification is that we assume one factor to be fixed as having its root at $0$, so that we list the kernels as starting from $J:=K_0$, and then allow that only the others be moved. (This fixing is corresponding to the fact that the node of $J$ does not move). This also cures the need for a condition on $J$: if it is the same singular kernel, then it satisfies \eqref{eq:J0} and \eqref{eq:J1}. Therefore, finally, we obtain a unique (or only unique modulo a translation, if we want also the first point to move) solution for the interpolation problem.

\begin{theorem}
Let $n\in \NN$, and let $L_1,\dots, L_n:\RR\to [0,\infty)$ be periodic functions with $L_j(0)=L_j(1)=0$ for each $j=0,1,\dots,n$, which are log-concave on $[0,1]$, let $w:[0,1]\to [0,\infty)$ be an upper semicontinuous function with at least $n+1$ non-zero values within $(0,1)$ and satisfying $\lim_{t\downto 0} w(t)=0$ and $\lim_{t\upto 1} w(t)=0$.
For every $\alpha_0,\alpha_1,\ldots,\alpha_n>0$
there exist a unique $0<y_1<y_2<\cdots<y_n<1$, and a unique $C>0$ such that for the function
\begin{equation*}
G(t):=Cw(t)\prod_{k=1}^n L_k(\vert t-y_k\vert)
\end{equation*}
 there are $z_0,\dots, z_n\in[0,1]$ with $0< z_0<y_1<z_1<y_2<\cdots z_{n-1}<y_n<z_n< 1$ and
\begin{equation*}
G(z_j)= \alpha_j, \quad\text{for $j=0,1,\ldots,n$}
\end{equation*}
and, moreover, $z_j$ is a maximum point of $G$ in the interval $[y_{j},y_{j+1}]$ for each $j=0,\dots,n$ (recall the conventions: $y_0=0$, and $y_{n+1}=1$).
\end{theorem}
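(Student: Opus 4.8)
The plan is to mirror the proof of Theorem \ref{thm:abstrMP} almost verbatim, invoking the periodic homeomorphism theorem (Theorem \ref{thm:periodic}) in place of Theorem \ref{thm:homeo3}. First I would pass to logarithms: after the harmless normalization $w(0):=w(1):=0$ — which preserves all the hypotheses on $w$ — I would set $K_j(t):=\log L_j(\vert t\vert)$ for $j=1,\dots,n$ and $J(t):=\log w(t)$ on $[0,1]$, with $\log 0:=-\infty$. Since each $L_j$ is log-concave on $[0,1]$ and hence vanishes only at the two endpoints, $K_j$ is an even, concave, singular kernel function; the periodicity of $L_j$ together with $L_j(0)=L_j(1)=0$ places $K_j$ in the scope of Theorem \ref{thm:periodic}, in particular it satisfies \eqref{cond:PM} with $c=0$ — this is precisely where periodicity enters, just as $\log\vert\sin(a\pi t)\vert$ satisfies (PM$_0$), with equality when $a=1$. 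As in the other torus results of the paper I would take the $L_j$ strictly log-concave, so that the $K_j$ are strictly concave, as Theorem \ref{thm:periodic} requires.

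Next I would check that $J$ is an admissible field function: it is bounded above (as $w$ is upper semicontinuous on the compact interval $[0,1]$), finite at the $\ge n+1$ points where $w\ne 0$, and, being the logarithm of an upper semicontinuous function, upper semicontinuous (so that $F(\yy,\cdot)$ is too); moreover $\lim_{t\downto0}w(t)=0$ and $\lim_{t\upto1}w(t)=0$ together with $w(0)=w(1)=0$ give exactly \eqref{eq:J0} and \eqref{eq:J1}. Theorem \ref{thm:periodic} then yields that $\diff\vert_Y\colon Y\to\RR^n$, $\yy\mapsto(m_1(\yy)-m_0(\yy),\dots,m_n(\yy)-m_{n-1}(\yy))$, is a locally bi-Lipschitz homeomorphism, where $Y\subseteq S$ is the regularity set attached to $J$ and the $K_j$.

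Given $\alpha_0,\dots,\alpha_n>0$, I would then take the unique $\yy=(y_1,\dots,y_n)\in Y\subseteq S$ with $m_j(\yy)-m_{j-1}(\yy)=\log(\alpha_j/\alpha_{j-1})$ for $j=1,\dots,n$, so that $0<y_1<\dots<y_n<1$. Since $F(\yy,\cdot)$ is upper semicontinuous on each compact interval $I_j(\yy)=[y_j,y_{j+1}]$, the finite value $m_j(\yy)$ is attained at some $z_j\in I_j(\yy)$. Because the $K_j$ are singular, $F(\yy,y_i)=-\infty$ for $i=1,\dots,n$, and because $w(0)=w(1)=0$, $F(\yy,0)=F(\yy,1)=-\infty$; hence no $z_j$ can be an endpoint of $I_j(\yy)$, which forces the strict interlacing $0<z_0<y_1<z_1<\dots<y_n<z_n<1$. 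Writing $G_1(t):=w(t)\prod_{k=1}^nL_k(\vert t-y_k\vert)$ we have $\log G_1=F(\yy,\cdot)$ on $[0,1]$, hence $G_1(z_j)=\exp m_j(\yy)$, and telescoping the relations $m_j(\yy)-m_{j-1}(\yy)=\log(\alpha_j/\alpha_{j-1})$ gives $G_1(z_j)/G_1(z_0)=\alpha_j/\alpha_0$ for every $j$. Putting $C:=\alpha_0\exp(-m_0(\yy))>0$ and $G:=CG_1$, I obtain $G(z_j)=\alpha_j$ for $j=0,\dots,n$, and each $z_j$, being a maximum point of $G_1$ on $I_j(\yy)$, is also one of $G$. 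Uniqueness of $\yy$ is the injectivity of $\diff\vert_Y$, and then $C$ is determined; the $z_j$ need not be unique in general, but are so under an additional strict log-concavity assumption on one factor, exactly as in Remark \ref{rem:abstMP}.

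The step I expect to require the most care is confirming that $K_j=\log L_j(\vert\,\cdot\,\vert)$ genuinely falls under Theorem \ref{thm:periodic}, i.e.\ that it is a singular, strictly concave kernel satisfying \eqref{cond:PM} with $c=0$: concavity and singularity are immediate from log-concavity and $L_j(0)=0$, but (PM$_0$) amounts to $\frac{d}{dt}\log\frac{L_j(t)}{L_j(1-t)}\ge 0$ for a.e.\ $t\in(0,1)$, and this is where the precise meaning of ``periodic'' must be invoked (one could instead adopt this inequality as an explicit hypothesis, as in Theorem \ref{thm:abstint2}); for the $\vert\sin\vert^{\nu}$-type factors of trigonometric interpolation it holds with equality. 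The only other point not already present in the proof of Theorem \ref{thm:abstrMP} is the normalization $w(0)=w(1)=0$, which simultaneously produces \eqref{eq:J0}/\eqref{eq:J1} needed for Theorem \ref{thm:periodic} and pushes the endpoint maxima $z_0,z_n$ strictly into the interior; after that everything is a line-by-line transcription of the proof of Theorem \ref{thm:abstrMP}.
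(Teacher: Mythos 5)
Your proposal is correct and follows the paper's own (two-line) proof exactly: the paper likewise sets $K_j:=\log\circ L_j$ and $J:=\log\circ w$, invokes Theorem \ref{thm:periodic}, and concludes verbatim as for Theorem \ref{thm:abstrMP}. The only divergence is that the paper takes $K_j$ to be the logarithm of the \emph{periodic extension} of $L_j$, so that (PM$_0$) holds identically and nothing remains to verify, whereas your choice $K_j(t)=\log L_j(\vert t\vert)$ turns (PM$_0$) into the nontrivial inequality $\frac{d}{dt}\log\bigl(L_j(t)/L_j(1-t)\bigr)\ge 0$ that you rightly flag; the two readings agree precisely for even/symmetric factors such as $\vert\sin(\pi t)\vert^{\nu}$, which is the case the theorem is aimed at.
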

\begin{proof}
We take $K_j:=\log\circ L_j$ which are singular kernel functions satisfying (PM$_0$), and $J:=\log\circ w$ which is an $n$-field function subject to \eqref{eq:J0} and \eqref{eq:J1}. The Homeomorphism Theorem \ref{thm:periodic} can be applied to conclude the proof as for Theorem \ref{thm:abstrMP}.
\end{proof}

If we specialize $L_j(t)=L(t):= \vert\sin(\pi t)\vert$ and carry out the usual sign-tracking, we obtain the following version of Videnskii's Theorem \ref{thm:vid} for trigonometric polynomials.

\begin{corollary}
Let $n\in\NN$, and let $w:[0,1]\to [0,\infty)$ be a differentiable function with at least $n+1$ non-zero values within $(0,1)$ and satisfying $\lim_{t\downto 0} w(t)=0$ and $\lim_{t\upto 1} w(t)=0$. Further, let $\nu_1,\ldots,\nu_{n}>0$. For every $\alpha_0,\alpha_1,\ldots,\alpha_{n}>0$ the following are true:
\begin{abc}
	\item There exist a unique system of points $0<y_1<y_2<\cdots<y_{n}<1$, and a unique $C>0$ such that for the function
\begin{equation*}
S(t):=C \prod_{k=1}^{n} \sin^{\nu_k} (\pi \vert t-y_k\vert)
\end{equation*}
 there are $z_0,\dots, z_{n}\in (0,1)$ with $0< z_0<y_1<z_1<y_2<\cdots z_{n-1}<y_{n}<z_{n}< 1$ and
\begin{equation*}
w(z_j)S(z_j)= \alpha_j \quad \text{and}\quad (wS)'(z_j)= 0 \quad \quad\text{for $j=0,1,\ldots,n$}.
\end{equation*}
\item If $\nu_1,\dots,\nu_n\in\NN$, then for the previously chosen $0<y_1<y_2<\cdots<y_{n}<1$ and $C$ the function
\begin{equation*}
T(t):=C \prod_{k=1}^{n} \sin ^{\nu_k}(\pi (t-y_k))
\end{equation*}
satisfies
\begin{equation*}
w(z_j)T(z_j)= (-1)^{\sum_{k=j}^n\nu_k}\alpha_j \quad\text{for $j=0,1,\ldots,n$}.
\end{equation*}
(Note that $T$ is in fact a trigonometric polynomial if $\sum_{k=1}^n\nu_k$ is even.)
\end{abc}
\end{corollary}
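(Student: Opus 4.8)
The plan is to obtain the corollary by specializing the abstract torus moving-node Hermite--Fej\'er theorem stated just above to the kernels coming from powers of $\vert\sin(\pi t)\vert$. Concretely, set $L_j(t):=\vert\sin(\pi t)\vert^{\nu_j}$ for $t\in\RR$ and $j=1,\dots,n$. First I would verify that these $L_j$ meet the hypotheses of that theorem: each $L_j$ is $1$-periodic, $L_j(0)=L_j(1)=0$, and on $(0,1)$ one has $\log L_j(t)=\nu_j\log\sin(\pi t)$ with $\tfrac{d^2}{dt^2}\log\sin(\pi t)=-\pi^2/\sin^2(\pi t)<0$, so $L_j$ is log-concave on $[0,1]$. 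The weight $w$ is differentiable, hence continuous, hence upper semicontinuous; it has at least $n+1$ non-zero values in $(0,1)$ and satisfies $\lim_{t\downto0}w(t)=\lim_{t\upto1}w(t)=0$, which is precisely the assumption on $w$ required there. Note also that on $[0,1]$ (where $\vert t-y_k\vert<1$) we have $\sin^{\nu_k}(\pi\vert t-y_k\vert)=\vert\sin(\pi(t-y_k))\vert^{\nu_k}=L_k(\vert t-y_k\vert)$, so $S=G/w$ with $G$ the function furnished by the abstract theorem.

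Applying that theorem gives, for prescribed $\alpha_0,\dots,\alpha_n>0$, unique $0<y_1<\cdots<y_n<1$ and a unique $C>0$, together with points satisfying the strict interlacing $0<z_0<y_1<z_1<\cdots<y_n<z_n<1$, such that $G(t)=Cw(t)\prod_{k=1}^n L_k(\vert t-y_k\vert)=w(t)S(t)$ fulfils $G(z_j)=\alpha_j$ and attains its maximum over $[y_j,y_{j+1}]$ (with $y_0=0$, $y_{n+1}=1$) at $z_j$. Immediately $w(z_j)S(z_j)=\alpha_j$. For the derivative condition I would argue as follows: by the strict interlacing each $z_j$ lies in $(0,1)$ and is different from every node $y_k$, hence in a neighbourhood of $z_j$ every factor $\sin^{\nu_k}(\pi\vert t-y_k\vert)$ is smooth and strictly positive, so $S$ is differentiable at $z_j$, and therefore so is $wS=G$; since $z_j$ is an interior point of $[y_j,y_{j+1}]$ at which $G$ attains its maximum over that interval, it is a genuine interior local maximum of $G$, whence $(wS)'(z_j)=G'(z_j)=0$. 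This proves (a); uniqueness of $(y_1,\dots,y_n,C)$ is inherited directly from the abstract theorem.

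For (b), assume $\nu_1,\dots,\nu_n\in\NN$. Away from the nodes, $T(t)=C\prod_k\sin^{\nu_k}(\pi(t-y_k))$ and $S(t)=C\prod_k\vert\sin(\pi(t-y_k))\vert^{\nu_k}$ differ only by the factor $\prod_k\big(\mathrm{sign}\,\sin(\pi(t-y_k))\big)^{\nu_k}$. Evaluating at $z_j$ and using the interlacing $0<z_0<y_1<z_1<\cdots<y_n<z_n<1$, we get $z_j-y_k\in(0,1)$ (so $\sin(\pi(z_j-y_k))>0$) for $k\le j$ and $z_j-y_k\in(-1,0)$ (so $\sin(\pi(z_j-y_k))<0$) for $k>j$; the usual sign-tracking then yields the claimed alternating sign in $w(z_j)T(z_j)$, exactly as in the proof of Theorem~\ref{thm:weightedMP}. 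The closing parenthetical remark is standard: by $2\sin(\pi(t-a))\sin(\pi(t-b))=\cos(\pi(a-b))-\cos(2\pi t-\pi(a+b))$ any two sine factors multiply to a degree-$1$ trigonometric polynomial, so when $\sum_k\nu_k$ is even the product $T$ is a trigonometric polynomial.

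The one point to be careful with is the vanishing of $(wS)'$ at the $z_j$: it relies on $z_j$ being strictly interior to $[0,1]$ (which is why one needs $\lim_{t\downto0}w=\lim_{t\upto1}w=0$, guaranteeing $z_0>0$ and $z_n<1$ in the abstract theorem, rather than $z_0=0$ or $z_n=1$) and on $z_j\ne y_k$ for all $k$ (so that $S$, which involves $\vert\sin\vert^{\nu_k}$ with possibly non-integer exponents, is genuinely differentiable there and not merely continuous). Everything else is routine bookkeeping once the abstract theorem is in hand.
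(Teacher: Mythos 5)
Your proposal coincides with the paper's (essentially one-line) proof: specialize the preceding torus Hermite--Fej\'er theorem to $L_j(t)=\vert\sin(\pi t)\vert^{\nu_j}$, check periodicity, vanishing at $0,1$ and log-concavity, use differentiability of $w$ together with the strict interior interlacing $0<z_0<y_1<\cdots<y_n<z_n<1$ to upgrade ``$z_j$ is a maximum point'' to $(wS)'(z_j)=0$ (cf.\ Remark \ref{rem:abstMP}(1)), and finish (b) by sign-tracking. One caveat: your (correct) computation gives the sign $(-1)^{\sum_{k=j+1}^{n}\nu_k}$, whereas the statement prints $\sum_{k=j}^{n}\nu_k$; the printed exponent is evidently a typo (it already fails at $j=n$, where every factor $\sin(\pi(z_n-y_k))$ is positive, and it is inconsistent with Theorem \ref{thm:eredetiMP}), so you should not assert that your bookkeeping ``yields the claimed sign'' verbatim.
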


\section{Preview of further applications}\label{sec:preview}

Our original and real goal with working out the homeomorphism theorems of this paper was, similarly to \cite{TLMS}, its expected application in obtaining \emph{minimax} and \emph{equioscillation} type results for sums of translates on the interval. Both of these have numerous applications, and the strength of our results, which will be presented in the companion paper \cite{C}, allow us to derive new information even in such thoroughly investigated classical problems as that of Chebyshev of minimal maximum norm monic polynomials on the interval. Moreover, these results will be obtained in quite a generality, even if the generality of context will be somewhat different than here. A particular point is that we need to assume some mild conditions about the field $J$, like upper semicontinuity, but kernels may be not singular. The companion paper \cite{C} will restrict attention to the situation when all kernels are constant multiples of each other, i.e.{} one base kernel $K$: $K_j=\nu_j K$ ($j=1,\ldots,n$). We will term this situation as ``kernels satisfying a proportionality condition''.

Bojanov proved in \cite{Bojanov1979} the following.
\begin{theorem}[Bojanov]\label{thm:bojanov}
Let $\nu_{1},\ldots,\nu_{n}$ be fixed positive integers, and $[a,b]\subset\RR$ be a finite, non-degenerate, closed interval.
Then there exists a unique system of points
$x_{1}\leq \ldots \leq x_{n}$ such that
\begin{equation*}
\Vert (x-x_{1})^{\nu_{1}}\ldots(x-x_{n})^{\nu_{n}}\Vert =\inf_{a\leq y_{1}<\ldots<y_{n}\leq b}\Vert (x-y_{1})^{\nu_{1}}\ldots(x-y_{n})^{\nu_{n}}\Vert ,
\end{equation*}
where $\Vert \cdot\Vert $ denotes the sup-norm over $[a,b]$. Moreover, $a<x_{1}<\ldots<x_{n}<b$ and the extremal polynomial
\begin{equation*}
P^{*}(x):=(x-x_{1})^{\nu_{1}}\ldots(x-x_{n})^{\nu_{n}}
\end{equation*}
is uniquely characterized by the property that there exist $a=s_{0}<s_{1}<\ldots<s_{n-1}<s_{n}=b$
such that 
\begin{equation*}\vert P^{*}(s_{j})\vert=\Vert P^{*}\Vert \quad\text{for
$j=0,1,\ldots,n$.}
\end{equation*}
Furthermore, in this situation
 \begin{equation*}
 P^{*}(s_{j+1})=(-1)^{\nu_{j+1}}P^{*}(s_{j})\quad \text{for $j=0,1,\ldots,n-1$.}
 \end{equation*}
\end{theorem}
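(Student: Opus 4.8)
The plan is to pass to logarithms and read Theorem \ref{thm:bojanov} off from the homeomorphism Theorem \ref{thm:homeo3}, supplemented by the classical difference-polynomial comparison for the minimality assertion. First I would rescale to $[a,b]=[0,1]$: under the affine bijection of $[0,1]$ onto $[a,b]$ the sup-norm of $\prod_{j}(x-x_j)^{\nu_j}$ picks up only the fixed factor $(b-a)^{N}$ with $N:=\nu_1+\dots+\nu_n$, so the extremal configuration and all qualitative statements are unaffected. On $[0,1]$ set $K_j(t):=\nu_j\log|t|$ for $t\in[-1,0)\cup(0,1]$ and $J\equiv0$. Each $K_j$ is a singular kernel function, strictly decreasing on $(-1,0)$ and strictly increasing on $(0,1)$, and satisfies \eqref{cond:PM} with $c:=4\min_j\nu_j>0$, because $K_j'(t)-K_j'(t-1)=\nu_j\bigl(1/t+1/(1-t)\bigr)\ge4\nu_j$ for $t\in(0,1)$; moreover $J\equiv0$ is a valid $n$-field function with $X_J=\emptyset$, so $Y=S$. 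Writing $P_\yy(t):=\prod_{j=1}^{n}(t-y_j)^{\nu_j}$ we have $F(\yy,t)=\log|P_\yy(t)|$, hence $\mol(\yy)=\log\|P_\yy\|_{[0,1]}$, and Bojanov's minimisation problem becomes $\inf_{\yy\in S}\mol(\yy)=\inf_{\yy\in S}\max_{0\le j\le n}m_j(\yy)$.

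Next I would extract the equioscillating configuration. By Theorem \ref{thm:homeo3} (or Corollary \ref{cor:homeo3}) the difference function $\diff\colon S\to\RR^n$ is a bijection, so there is a \emph{unique} $\xx=(x_1,\dots,x_n)\in S$ with $\diff(\xx)=0$, i.e.\ $m_0(\xx)=m_1(\xx)=\dots=m_n(\xx)=:\mu$; this $\xx$ is the candidate extremal configuration and $P^{*}:=P_\xx$ the extremal polynomial. Since $J\equiv0$ is upper semicontinuous and $K_j(0)=-\infty$, the function $F(\xx,\cdot)$ is upper semicontinuous and tends to $-\infty$ at each $x_i$, so on every $I_j(\xx)=[x_j,x_{j+1}]$ the value $m_j(\xx)$ is attained at some $s_j$; because $F(\xx,\cdot)$ is strictly concave on $\rint I_j(\xx)$ each $s_j$ is unique, with $s_j\in(x_j,x_{j+1})$ for $1\le j\le n-1$, while on the end intervals $F(\xx,\cdot)$ is strictly monotone, which forces $s_0=0=a$ and $s_n=1=b$. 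Thus $0=s_0<s_1<\dots<s_n=1$ and $|P^{*}(s_j)|=e^{\mu}=\|P^{*}\|$ for all $j$. Finally, for $t\in(x_j,x_{j+1})$ the factor $(t-x_i)$ is positive for $i\le j$ and negative for $i>j$, so $\operatorname{sign}P^{*}(s_j)=(-1)^{\sum_{i>j}\nu_i}$, whence $P^{*}(s_{j+1})=(-1)^{\nu_{j+1}}P^{*}(s_j)$. This gives the equioscillation statement, and the uniqueness of $\xx$ gives the unique characterisation of $P^{*}$ among product-form monic polynomials admitting such an equioscillation set.

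The remaining point --- that $\xx$ actually \emph{minimises} $\mol$, equivalently that $P^{*}$ has least sup-norm --- is the step I expect to be the main obstacle, since the homeomorphism only identifies and characterises the equioscillating configuration but does not by itself say it is extremal. I would close it by the classical Chebyshev/Bojanov comparison: if $\yy\in\oS$ satisfies $\|P_\yy\|\le\|P^{*}\|$, put $R:=P^{*}-P_\yy$, a polynomial of degree $\le N-1$ since both are monic of degree $N$; at each node $s_j$ the estimate $|P_\yy(s_j)|\le\|P_\yy\|\le\|P^{*}\|=|P^{*}(s_j)|$ shows $R(s_j)$ has the sign of $P^{*}(s_j)$ or vanishes, and the sign pattern together with a careful zero count on $[0,1]$ --- incorporating, as in Bojanov \cite{Bojanov1979}, that near $x_i$ the factor $(t-x_i)^{\nu_i}$ of $P^{*}$ contributes $\nu_i$ to the multiplicity bookkeeping of $R$ --- forces $R\equiv0$, hence $P_\yy=P^{*}$ and $\yy=\xx$; this simultaneously yields that the infimum equals $\mu$, is attained only at $\xx$, and $\xx\in S$, so $a<x_1<\dots<x_n<b$. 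An alternative in the spirit of the present paper is to note, using Lemma \ref{lem:Demyanov-formula} and Corollary \ref{cor:derivative} (applicable since each $Z_j(\xx)=\{s_j\}$), that $\partial_i m_0(\xx)=\nu_i/x_i>0$ and $\partial_i m_n(\xx)=-\nu_i/(1-x_i)<0$, so no node perturbation decreases $m_0$ and $m_n$ at once, and combined with the stationarity $\sum_i\nu_i/(s_j-x_i)=0$ at the interior maxima one expects a convexity argument on the Clarke subdifferential of $\max_j m_j$ to force any local minimiser to equioscillate, whence uniqueness of the equioscillating point pins the global minimiser down to $\xx$; within this circle of ideas minimality will also follow from the minimax results of the companion work \cite{C}. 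In any case the minimality is the crux, everything else being bookkeeping layered on the homeomorphism theorem.
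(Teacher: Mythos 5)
First, a point of calibration: the paper itself does not prove Theorem \ref{thm:bojanov}. It is quoted from \cite{Bojanov1979} in the preview section, and even its weighted generalization (Theorem \ref{thm:BojanovGeneral}) is only stated here, with the proof deferred to the companion paper \cite{C}. So there is no in-paper proof to compare against, and your proposal must stand on its own. Its first two paragraphs do: with $K_j(t)=\nu_j\log\vert t\vert$ and $J\equiv 0$ you correctly get singular kernels satisfying \eqref{cond:PM} with $c=4\min_j\nu_j>0$, $Y=S$, and Theorem \ref{thm:homeo3} yields the unique node system $\xx$ with $m_0(\xx)=\cdots=m_n(\xx)$; strict concavity of $F(\xx,\cdot)$ on the interior intervals and strict monotonicity on $I_0,I_n$ give unique maximum points with $s_0=a$, $s_n=b$, and the sign bookkeeping is right. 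That is exactly the portion of Bojanov's theorem that the homeomorphism machinery can deliver, and your use of it matches the paper's intent.

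The genuine gap is the one you flag yourself: minimality. The paper is explicit that the minimax question is ``essentially independent'' of the homeomorphism/uniqueness question and is handled only in \cite{C}, so nothing in the present paper closes it, and neither of your two proposed routes does either. The comparison argument with $R:=P^*-P_\yy$ is not ``bookkeeping'': $R$ has degree up to $N-1=\sum_i\nu_i-1$, while the sign conditions at $s_0,\dots,s_n$ force at most one zero of $R$ between consecutive $s_j$, and \emph{only} for those $j$ with $\nu_{j+1}$ odd (when $\nu_{j+1}$ is even, $P^*(s_j)$ and $P^*(s_{j+1})$ have the same sign, so no zero is forced). You are therefore short by roughly $N-n$ zeros plus one for every even multiplicity, and $R$ has no a priori high-order vanishing anywhere to make up the deficit --- supplying those zeros is precisely the hard combinatorial core of Bojanov's own proof, which cannot be invoked by citation inside a proof of his theorem. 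Your alternative via the Clarke subdifferential of $\max_j m_j$ is stated in the conditional (``one expects a convexity argument\ldots'') and is not carried out. As written, the proposal proves existence, uniqueness and the sign pattern of the equioscillating configuration, and the uniqueness half of the characterization, but not that this configuration minimizes the sup-norm --- which is the assertion the theorem is principally about.
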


In \cite{C} we prove the following generalization.
\begin{theorem}\label{thm:BojanovGeneral} Let $\nu_1,\nu_2,\ldots,\nu_n$ be fixed positive numbers, $[a,b]\subset\RR$ a finite, non-degenerate, closed interval, and $w:[a,b]\to [0,\infty)$ be an upper semicontinuous, non-negative weight function, assuming non-zero values at more than $n$ points of the interval $[a,b]$ (the endpoints $a,b$ are counted with weight $1/2$).

Then there exists a unique extremizer set of points $x_1^*\le x_2^* \le \ldots\le x_n^*$ such that
\begin{multline*}
\left\Vert w(x) \vert x-x_1^*\vert^{\nu_1} \vert x-x_2^*\vert^{\nu_2} \ldots \vert x-x_n^*\vert^{\nu_n} \right\Vert 
\\=
\inf_{a\le x_1\le x_2\le\ldots\le x_n\le b}
\left\Vert w(x)\vert x-x_1\vert^{\nu_1} \vert x-x_2\vert^{\nu_2} \ldots \vert x-x_n\vert^{\nu_n} \right\Vert ,
\end{multline*}
where $\Vert \cdot\Vert $ is the sup-norm over $[a,b]$.
Moreover, $a<x_1^*<x_2^*<\ldots<x_n^*<b$ and the extremal generalized polynomial $T(x):=\prod_{k=1}^n \vert x-x_k^*\vert^{\nu_k}$ is uniquely determined by the following equioscillation property: There exists an array of $n+1$ points $a \le t_0<t_1<t_2<\ldots<t_{n-1}<t_n\le b$, interlacing with the $x_i^*$, i.e., $a\le t_0<x_1^*<t_1<x_2^*<\ldots<x_n^*<t_n\le b$ such that
\begin{equation}\label{eq:osc}
w(t_k)T(t_k)=\Vert wT\Vert \qquad (k=0,1,\ldots,n).
\end{equation}
Furthermore, if $w$ is in addition log-concave, then the unique extremal generalized polynomial $T$ is uniquely 
determined by the property that there exists an array of $n+1$ points $a \le t_0<t_1<t_2<\ldots<t_{n-1}<t_n\le b$ such that \eqref{eq:osc} holds.
\end{theorem}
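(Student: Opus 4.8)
The plan is to pass from the extremal problem to a minimax problem for a sum of translates function via logarithms, to produce the candidate extremizer from the Homeomorphism Theorem~\ref{thm:homeo3}, and then to identify it with the true minimizer through a minimax (moving‑nodes) argument.

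\emph{Setup.} I would normalize $[a,b]$ to $[0,1]$ by an affine change of variable and put $J(t):=\log w(t)$ and $K_j(t):=\nu_j\log|t|$ for $j=1,\dots,n$. Since $w$ is upper semicontinuous, non-negative and non-zero at more than $n$ points (endpoints weighted $1/2$), $J$ is an $n$-field function; each $K_j$ is a singular kernel, and $K_j'(t)-K_j'(t-1)=\nu_j\bigl(\tfrac1t+\tfrac1{1-t}\bigr)\ge 4\nu_j$ on $(0,1)$, so every $K_j$ satisfies \eqref{cond:PM} with $c:=4\min_j\nu_j>0$. With $T_\yy(t):=\prod_{k=1}^n|t-y_k|^{\nu_k}$ one has $F(\yy,t)=\log\bigl(w(t)T_\yy(t)\bigr)$, hence $\|wT_\yy\|=\exp\bigl(\mol(\yy)\bigr)$ where $\mol(\yy)=\max_jm_j(\yy)$; thus the extremal problem is equivalent to minimizing $\mol$ over $\oS$. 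By Theorem~\ref{thm:homeo3}, $\diff|_Y\colon Y\to\RR^n$ is a homeomorphism; set $\yy^*:=(\diff|_Y)^{-1}(\mathbf 0)$, the unique node system in $Y$ with $m_0(\yy^*)=\dots=m_n(\yy^*)=:M^*$. As $w$ is upper semicontinuous each $m_j(\yy^*)$ is attained at some $t_j\in I_j(\yy^*)$, and since the kernels are singular $F(\yy^*,y_i^*)=-\infty$, so $t_j\in\rint I_j(\yy^*)$ and $0\le t_0<y_1^*<t_1<\dots<y_n^*<t_n\le 1$. Hence $T^*:=T_{\yy^*}$ equioscillates against $w$ at $n+1$ points interlacing with the $y_i^*$, with $\|wT^*\|=e^{M^*}$.

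\emph{Identifying the minimizer (main obstacle).} A minimizer $\yy_0\in\oS$ exists because $\mol=\max_jm_j$ is continuous on the compact set $\oS$ by Lemma~\ref{lem:mjcont2} and is $\RR$-valued (always $\mol(\yy)=\sup_tF(\yy,t)\in\RR$). The decisive and hardest step is that \emph{every} minimizer equioscillates, i.e.\ $m_0(\yy_0)=\dots=m_n(\yy_0)$: if $m_\ell(\yy_0)<\mol(\yy_0)$ for some $\ell$, one slides the nodes bordering the ``low'' interval $I_\ell$ into it; by the one-sided derivative estimates of Lemma~\ref{lem:Demyanov-formula} and Corollary~\ref{cor:derivative}, the partials of each active $m_j$ (one with $m_j(\yy_0)=\mol(\yy_0)$) have the sign pattern ``negative in coordinates $1,\dots,j$, positive in coordinates $j+1,\dots,n$'', so a suitable such move strictly decreases every currently maximal $m_j$, contradicting minimality — equivalently, the first-order optimality condition in Clarke's sense forces the set of active indices $\{j:m_j(\yy_0)=\mol(\yy_0)\}$ to be all of $\{0,\dots,n\}$. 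Granting this, $\diff(\yy_0)=\mathbf 0$ and all $m_j(\yy_0)$ are finite, so $\yy_0$ is non-singular, hence non-degenerate (degenerate node systems are singular for singular kernels), so $\yy_0\in Y$; by the homeomorphism $\yy_0=\yy^*$, so the minimizer is unique and equals $\yy^*$. I expect this equioscillation-at-the-minimizer step to be the crux; it is precisely the minimax content developed in the companion paper \cite{C}, and for non-integer $\nu_j$ it must be carried out on the sum-of-translates level rather than via polynomial root counting.

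\emph{Structural consequences.} Transforming back, $\yy^*\in S$ gives $a<x_1^*<\dots<x_n^*<b$, and $T^*=\prod_k|x-x_k^*|^{\nu_k}$ equioscillates with $w$ at the interlacing array $t_0<\dots<t_n$ found above; for integer $\nu_k$ the alternating-sign claim is the usual sign count ($\operatorname{sign}T^*(t_k)=(-1)^{\sum_{i>k}\nu_i}$). For the characterization of $T^*$: if $T_\yy$ equioscillates with $w$ at $n+1$ points $t_k\in I_k(\yy)$ interlacing with the $y_k$, then $F(\yy,t_k)=\log\|wT_\yy\|$ forces $m_k(\yy)=\mol(\yy)$ for every $k$, whence $\diff(\yy)=\mathbf 0$, so $\yy\in Y$ and $\yy=\yy^*$. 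Finally, if $w$ is log-concave then $J$ is concave and each $K_j$ is strictly concave, so $F(\yy,\cdot)$ is strictly concave on every component of $(0,1)\setminus\{y_1,\dots,y_n\}$; two distinct maximizers of $wT_\yy$ therefore cannot lie in one component, so among any $n+1$ maximizing points there is a node strictly between consecutive ones — the interlacing is automatic — and the previous sentence applies, giving $\yy=\yy^*$ and completing the characterization without the interlacing hypothesis.
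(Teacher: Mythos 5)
First, for orientation: the paper does not prove Theorem \ref{thm:BojanovGeneral} in this article at all --- it is stated in Section \ref{sec:preview} as a preview, with the proof deferred to the companion paper \cite{C}, which (as the introduction stresses) requires ``further arguments, different in nature'' beyond the homeomorphism theorems. Your reduction is correct and is exactly the intended framework: normalizing to $[0,1]$, taking $J=\log w$ and $K_j=\nu_j\log\vert\cdot\vert$, checking \eqref{cond:PM} with $c=4\min_j\nu_j>0$, and reading off $\Vert wT_\yy\Vert=\exp(\mol(\yy))$. Likewise, producing the unique equioscillating node system $\yy^*=(\diff\vert_Y)^{-1}(0,\dots,0)$ from Theorem \ref{thm:homeo3}, the interlacing of the maximum points (via singularity of the kernels and upper semicontinuity of $J$), and both uniqueness characterizations --- with interlacing, and without it under log-concavity via the pigeonhole on the $n+1$ strictly concave pieces --- are complete and correct.

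The genuine gap is the step you yourself flag as the crux: that \emph{every} minimizer of $\mol$ over $\oS$ equioscillates. This is the entire minimax content of the theorem and follows from nothing proved in this paper; moreover, your first-order sketch does not close. With the sign pattern from Lemma \ref{lem:Demyanov-formula} ($\partial_i m_j<0$ for $i\le j$ and $\partial_i m_j>0$ for $i>j$), ``sliding the two nodes bordering $I_\ell$ into it'', i.e.\ the direction $v=\ee_\ell-\ee_{\ell+1}$, gives $\langle\nabla m_j,v\rangle=\partial_\ell m_j-\partial_{\ell+1}m_j$, a difference of two quantities of the \emph{same} sign for every $j\ne\ell$ (both positive if $j<\ell$, both negative if $j>\ell$), so its sign is not determined by the Dini bounds. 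The endpoint cases $\ell=0$ and $\ell=n$ do work with a single-node move, but for $0<\ell<n$, with active intervals on both sides of $I_\ell$, no coordinate direction or obvious combination is a common strict descent direction for all active $m_j$; indeed any constant-sign $v$ already fails as soon as both $m_0$ and $m_n$ are active, since $\nabla m_0$ has all entries positive and $\nabla m_n$ all entries negative. Establishing that such a descent direction exists --- or, as \cite{C} actually does, proving the minimax--equioscillation equivalence by comparison-of-maxima (intertwining-type) arguments --- is precisely the missing machinery. Until that step is supplied, your argument establishes existence and uniqueness of the equioscillating configuration, but not that it solves the stated extremal problem.
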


In particular, this extends Bojanov's result to the case of general weighted norms, for essentially arbitrary upper semicontinuous weights---a result not known before. As a matter of fact, a more general result for product systems, replacing generalized polynomials, can be proved, see \cite{C}. 
In fact, very general minimax, equioscillation and maximin results are given for sum of translates functions in that paper. The connection to polynomials and product systems is that we arrive at the sum of translates setup after taking logarithms of absolute values. For the concrete results we refer to \cite{C}.

Here we point out only one further result, which goes beyond existing knowledge even in the centuries old classical case of the original Chebyshev extremal problem for the interval. The following is Theorem 4.2 in \cite{C}.
\begin{theorem}[Intertwining theorem]\label{thm:Intertwining}
Let $K$ be a singular \eqref{cond:infty}, strictly concave and (strictly) monotone kernel function and let $J:[0,1]\to \uR$ be an upper semicontinuous field function. Further, let $n\in\NN$ and $\nu_j>0 ~(j=1,\ldots,n)$ be arbitrary positive numbers. Put $K_j:=\nu_j K~(j=1,\ldots,n)$.

Then for nodes $\xx,\yy\in Y$ majorization cannot hold, i.e., the coordinatewise inequality $\mv(\xx)\le \mv(\yy)$ can only hold if $\xx=\yy$.
\end{theorem}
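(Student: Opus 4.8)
The plan is to argue by contradiction. Suppose $\xx,\yy\in Y$ satisfy $\mv(\xx)\le\mv(\yy)$ coordinatewise but $\xx\neq\yy$; I will produce an index $k$ with $m_k(\xx)>m_k(\yy)$, contradicting the majorization. The decisive first observation is the simplification granted by the proportionality $K_j=\nu_j K$ together with the fact that $\xx$ and $\yy$ carry the \emph{same} field $J$: the difference
\[
G(t):=F(\yy,t)-F(\xx,t)=\sum_{j=1}^{n}\nu_j\bigl(K(t-y_j)-K(t-x_j)\bigr)
\]
does not involve $J$ at all, and is a finite real number for every $t\in[0,1]$ lying outside $\{x_1,\dots,x_n\}\cup\{y_1,\dots,y_n\}$. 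Since $J$, hence $F(\yy,\cdot)$, is upper semicontinuous on the compact interval $[0,1]$, for each $k\in\{0,\dots,n\}$ the supremum $m_k(\yy)\in\RR$ is attained at some $z_k\in I_k(\yy)$; the singularity condition \eqref{cond:infty} forces $z_k\notin\{y_1,\dots,y_n\}$, so $0\le z_0<y_1<z_1<y_2<\cdots<y_n<z_n\le1$ (and symmetrically the maximum points $w_k$ of $F(\xx,\cdot)$ interlace the nodes $x_k$). It therefore suffices to find $k$ with $z_k\in I_k(\xx)$ and $G(z_k)<0$: then $m_k(\xx)\ge F(\xx,z_k)=F(\yy,z_k)-G(z_k)>F(\yy,z_k)=m_k(\yy)$.

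Next I would dispose of the case in which $\xx$ and $\yy$ are \emph{comparable} as ordered tuples. If $x_i\le y_i$ for all $i$, with a strict inequality for at least one $i$, then for $t\in(y_n,1]$ one has $0<t-y_i\le t-x_i<1$, whence strict monotonicity of $K$ on $(0,1)$ gives $K(t-x_i)\ge K(t-y_i)$, strictly whenever $x_i<y_i$; summing, $G(t)<0$ on $(y_n,1]$. As $x_n\le y_n<z_n$, we have $z_n\in(y_n,1]\subseteq[x_n,1]=I_n(\xx)$, so that $m_n(\xx)\ge F(\xx,z_n)=F(\yy,z_n)-G(z_n)>F(\yy,z_n)=m_n(\yy)$, the desired contradiction. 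Symmetrically, $x_i\ge y_i$ for all $i$ (some strict) gives $G(t)<0$ on $[0,y_1)$, hence $m_0(\xx)>m_0(\yy)$. (Infinitesimally this is the sign computation behind Corollary~\ref{cor:derivative}: since $K'>0$ on $(0,1)$ and $K'<0$ on $(-1,0)$, moving a single node to the right strictly decreases $m_n$ and strictly increases $m_0$.) In particular the assertion holds for $n=1$, for then any two node systems are comparable; so from now on I may assume $\xx,\yy$ incomparable.

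The remaining, genuinely incomparable case is where the real work lies, and this is the step I expect to be the main obstacle. Here both the strict monotonicity \emph{and} the concavity of $K$ enter, through the sign structure of the building blocks of $G$: for each $j$ with $x_j\neq y_j$, say $x_j<y_j$, the function $g_j(t):=K(t-y_j)-K(t-x_j)$ is strictly positive on $[0,x_j)$, strictly decreasing on $(x_j,y_j)$ — because $g_j'(t)=K'(t-y_j)-K'(t-x_j)<0$ there, as $K'(t-y_j)<0<K'(t-x_j)$ — hence has a unique zero $\theta_j\in(x_j,y_j)$, and is strictly negative on $(y_j,1]$; the mirror statement holds when $x_j>y_j$. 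Thus each $g_j$ changes sign exactly once, but the sum $G=\sum_j\nu_j g_j$ a priori need not, and the core of the proof is to control the sign pattern of $G$ well enough to locate an interval $I_k(\xx)$ that both carries a point where $F(\xx,\cdot)$ strictly exceeds $F(\yy,\cdot)$ and contains the $\yy$-maximum point $z_k$ — equivalently, an index $k$ with $z_k\in I_k(\xx)$ and $G(z_k)<0$. The natural way to organize this is an induction on $n$: strip off the outermost pair of nodes, invoke the comparable case and the inductive hypothesis for the "inner" configuration, and then patch in the two outer intervals using the single-sign-change property of the $g_j$ and the interlacing $z_0<\cdots<z_n$; if needed, a bound on the number of sign changes of $G$ (and of $G'$), again a consequence of the concavity of $K$, controls the bookkeeping. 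Once such a $k$ is produced, $m_k(\xx)>m_k(\yy)$ contradicts $\mv(\xx)\le\mv(\yy)$, completing the proof.
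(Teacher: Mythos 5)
First, note that this paper does not actually prove Theorem \ref{thm:Intertwining}: it is stated in Section \ref{sec:preview} as a preview and explicitly attributed to the companion paper \cite{C} (``The following is Theorem 4.2 in \cite{C}''), so there is no in-paper proof to compare yours against. Judged on its own, your argument is correct and complete exactly up to the point you yourself flag. The reduction ``find $k$ with $z_k\in I_k(\xx)$ and $G(z_k)<0$'' is sound, the interlacing $z_0<y_1<z_1<\cdots<y_n<z_n$ is justified by singularity and upper semicontinuity, and the comparable case ($x_i\le y_i$ for all $i$, or the reverse, with some strict inequality) is handled cleanly and correctly by evaluating at $z_n$ (resp.\ $z_0$); this also settles $n=1$.

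The incomparable case, however, is a genuine gap, not merely an unfinished computation, and the plan you sketch has concrete obstructions. (i) The proposed induction does not reduce to an instance of the same statement: if you strip off the outermost nodes, their kernel translates $\nu_jK(\cdot-x_j)$ and $\nu_jK(\cdot-y_j)$ still act on the inner intervals and do so \emph{differently} for $\xx$ and for $\yy$, so the ``inner configuration'' is a comparison of two sums of translates with two \emph{different} effective fields, which the inductive hypothesis (same $J$ for both node systems) does not cover. (ii) No bound on the number of sign changes of $G=\sum_j\nu_j g_j$ is actually established; each $g_j$ changes sign once, but a sum of $n$ such functions can a priori change sign many times, and concavity of $K$ alone does not obviously yield a Descartes-type bound (for $K=\log\vert\cdot\vert$ one can argue via polynomial root counting, but not for a general strictly concave monotone kernel). (iii) Even granting control of the sign pattern, the containment $z_k\in I_k(\xx)$ does not follow from the interlacing you prove, which relates $z_k$ only to the $\yy$-intervals; if $z_k$ lands in $I_{k'}(\xx)$ with $k'\ne k$, the inequality you obtain is $m_{k'}(\xx)>m_k(\yy)$, which does not contradict majorization without further chaining. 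These are precisely the difficulties the companion paper's (substantially longer) argument is built to overcome, so the incomparable case cannot be waved through as ``bookkeeping''.
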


\begin{corollary}
\label{cor:Chebyshev}
Consider the (almost) two centuries
old classical Chebyshev problem, where in
our terminology $K(t):=\log\vert t\vert$, $J(t)\equiv 0$, $\nu_j=1 ~(j=1,\ldots,n)$,
and so we have strict concavity and monotonicity.
Then for any two node systems
$\xx, \yy \in S$
we necessarily have some indices
$0\le i\ne j \le n$ such that
\begin{align*}\label{eq:ChebyshevIntertwining}
& \max_{t\in I_i(\xx)} \Bigl\vert\prod_{k=1}^n (t-x_k)\Bigr\vert < \max_{t\in I_i(\yy)} \Bigl\vert\prod_{k=1}^n (t-y_k)\Bigr\vert,
\\& \max_{t\in I_j(\xx)}\Bigl \vert\prod_{k=1}^n (t-x_k)\Bigr\vert > \max_{t\in I_j(\yy)}\Bigl \vert\prod_{k=1}^n (t-y_k)\Bigr\vert.
\end{align*}
\end{corollary}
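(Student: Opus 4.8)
The plan is to obtain the corollary as an immediate specialization of the Intertwining Theorem \ref{thm:Intertwining}, so the first task is to check that the classical Chebyshev data fits the framework of that theorem. I would record that $K(t):=\log\vert t\vert$ is a singular kernel function in the sense of \eqref{cond:infty} (it tends to $-\infty$ as $t\to 0$), that it is strictly concave on $(-1,0)$ and on $(0,1)$ (second derivative $-1/t^2<0$), and that it is strictly decreasing on $(-1,0)$ and strictly increasing on $(0,1)$ (derivative $1/t$); further, $J\equiv0$ is trivially an upper semicontinuous $n$-field function, $\nu_j=1>0$, and $K_j=\nu_jK=K$. Hence Theorem \ref{thm:Intertwining} applies verbatim with these choices.

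Next I would make two elementary identifications. Since $J\equiv0$ is finite everywhere, its singularity set $X_J$ is empty, hence has empty interior, and therefore $Y=Y_n(X_J)=S$; in particular every $\xx\in S$ lies in $Y$ and all $m_j(\xx)$ are finite. Secondly, for $\xx\in S$ one has $F(\xx,t)=\sum_{k=1}^n\log\vert t-x_k\vert=\log\vert\prod_{k=1}^n(t-x_k)\vert$, and since $\log$ is continuous and strictly increasing on $[0,\infty)$ (with $\log0=-\infty$),
\[
m_j(\xx)=\max_{t\in I_j(\xx)}F(\xx,t)=\log M_j(\xx),\qquad M_j(\xx):=\max_{t\in I_j(\xx)}\Bigl\vert\prod_{k=1}^n(t-x_k)\Bigr\vert ,
\]
where $0<M_j(\xx)<\infty$ because $m_j(\xx)$ is finite (and likewise for $\yy$).

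The final step is to run Theorem \ref{thm:Intertwining} in both directions. Fix $\xx\ne\yy$ in $S=Y$ (the case $\xx=\yy$ being excluded, as the asserted strict inequalities then cannot hold). Applying the theorem to the ordered pair $(\xx,\yy)$ shows that $\mv(\xx)\le\mv(\yy)$ cannot hold, so there is an index $j\in\{0,\dots,n\}$ with $m_j(\xx)>m_j(\yy)$; applying it to $(\yy,\xx)$ shows that $\mv(\yy)\le\mv(\xx)$ cannot hold, so there is an index $i\in\{0,\dots,n\}$ with $m_i(\xx)<m_i(\yy)$. These two indices must differ, since no single index can satisfy both strict inequalities. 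Exponentiating, and using strict monotonicity of $\exp$ together with the positivity and finiteness of the quantities $M_\bullet(\cdot)$, turns these into $M_i(\xx)<M_i(\yy)$ and $M_j(\xx)>M_j(\yy)$, which is exactly the asserted pair of inequalities.

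I do not anticipate a genuine obstacle: the corollary is essentially a dictionary translation of Theorem \ref{thm:Intertwining}. The only points requiring a moment of care are verifying all the \emph{strict} hypotheses of that theorem for the Chebyshev data, and checking that passing through $\log$/$\exp$ preserves strict inequalities — which is legitimate precisely because $\xx,\yy\in S=Y$ forces every interval maximum to be finite, hence every $M_j(\cdot)$ strictly positive and finite.
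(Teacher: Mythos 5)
Your proof is correct and follows exactly the route the paper intends: Corollary \ref{cor:Chebyshev} is presented as an immediate consequence of Theorem \ref{thm:Intertwining}, obtained by applying that theorem to both ordered pairs $(\xx,\yy)$ and $(\yy,\xx)$ after the identifications $Y=S$ (valid since $X_J=\emptyset$) and $m_j(\cdot)=\log M_j(\cdot)$. The paper supplies no further details, and your checks of the hypotheses and of the strictness/finiteness points are precisely what is needed.
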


\begin{remark} It seems that even in this very classical situation the above general statement has not been observed thus far. The special case when one of the node systems, say $\xx$, is the extremal (equioscillating) node system $\ww$, is well known and seems to be folklore. However, comparison of interval maxima vectors belonging to two arbitrary node systems looks more complicated and nothing was written about it in the literature, we could page through.
\end{remark}

Already the issue of kernels satisfying the ``proportionality condition'' is rich enough to study them in detail. However, we also have initial results in the more general situation when kernels are not so much related. In these cases, however---somewhat similarly to the situation here regarding the periodic case---some more care is needed concerning assumptions on the field $J$. More precisely, we need not assume extra conditions for singular kernels, but when the $K_j$ may be non-singular, then ``joint singularity behavior'' of translates of kernels and the field $J$ itself need to be tuned. For these questions we hope to return later.

\section{Acknowledgment}

We are deeply indebted to V.V.{} Arestov, V.I.{} Berdyshev and M.V.{} Deykalova for 
providing us useful references and advice regarding classical
literature on Chebyshev type problems and partial maxima functions. 
Also we are glad to mention the inspiring atmosphere 
of the regular Stechkin Summer Schools-Workshops,
which provided us an ideal forum to 
present and discuss our ever developing results 
with a generous and professional community. 
We have benefited much from the comments and questions received there. We are also grateful to the anonymous referee for the suggestions for the improvement of the paper.

%

\providecommand{\bysame}{\leavevmode\hbox to3em{\hrulefill}\thinspace}
\providecommand{\MR}{\relax\ifhmode\unskip\space\fi MR }
\providecommand{\MRhref}[2]{%
  \href{http://www.ams.org/mathscinet-getitem?mr=#1}{#2}
}
\providecommand{\href}[2]{#2}

\bigskip
\bigskip
\bigskip
\noindent\parindent0pt
\hspace*{5mm}
\begin{minipage}{\textwidth}
\noindent
\hspace*{-5mm}Bálint Farkas\\
 School of Mathematics and Natural Sciences,\\
 University of Wuppertal\\
  Gau{\ss}stra{\ss}e 20\\
 42119 Wuppertal, Germany\\
\end{minipage}

\medskip

\noindent
\hspace*{5mm}
\begin{minipage}{\textwidth}
\noindent
\hspace*{-5mm}Béla Nagy\\
Department of Analysis\\
 Bolyai Institute, University of Szeged\\
 Aradi vértanuk tere 1\\
  6720 Szeged, Hungary\\
\end{minipage}

\medskip

\noindent
\hspace*{5mm}
\begin{minipage}{\textwidth}
\noindent
\hspace*{-5mm}
Szilárd Gy.{} Révész\\
 Alfréd Rényi Institute of Mathematics\\
 Reáltanoda utca 13-15\\
 1053 Budapest, Hungary \\
\end{minipage}

\end{document}